\newtheorem{theorem}{Theorem}
\newtheorem{lemma}[theorem]{Lemma}
\newtheorem{proposition}[theorem]{Proposition}
\newtheorem{corollary}[theorem]{Corollary}
\theoremstyle{definition}
\newtheorem{remark}[theorem]{Remark}
\newcommand{\pr}{\mathbf P}
\newcommand{\E}{\mathbf E} 
\newcommand{\e}{\mathbf E}
\newcommand{\A}{\mathcal{A}}
\newcommand{\C}{\mathcal{C}}
\renewcommand{\L}{\mathcal{L}}
\newcommand{\veps}{\varepsilon}
\title{Ordered random walks and the Airy line ensemble}
\author{Denis Denisov\thanks{Department of Mathematics, University of Manchester, 
Manchester, UK.} \and Will FitzGerald\footnotemark[1] \and Vitali Wachtel\thanks{Faculty of Mathematics, Bielefeld University, Bielefeld, Germany. \\
D. Denisov and W. FitzGerald were supported by a Leverhulme Trust Research Project Grant, RPG-2021-105.
V. Wachtel was funded by the Deutsche Forschungsgemeinschaft (DFG, German Research
 Foundation)—Project ID 317210226—SFB 1283.\\
Email addresses: denis.denisov@manchester.ac.uk, william.fitzgerald@manchester.ac.uk, 
 wachtel@math-uni.bielefeld.de.
 }}
\date{}
\begin{document}
\maketitle
\begin{abstract}
The Airy line ensemble is a random collection of continuous ordered paths that plays an important role within random matrix theory and the Kardar-Parisi-Zhang universality class. The aim of this paper is to prove a universality property of the Airy line ensemble. We study growing numbers of i.i.d.~continuous-time random walks which are then conditioned to stay in the same order for all time using a Doob $h$-transform. We consider a general class of increment distributions; a sufficient condition is the existence of an exponential moment and a log-concave density. We prove that the top particles in this system converge in an edge scaling limit to the Airy line ensemble in a regime where the number of random walks is required to grow slower than a certain power (with a non-optimal exponent 3/50) of the expected number of random walk steps.
Furthermore, in a similar regime we prove that the law of large numbers and fluctuations of linear statistics agree with non-intersecting Brownian motions.
\end{abstract}

\paragraph{Keywords.} Ordered random walks, Airy line ensemble, Dyson Brownian motion, Doob h-transforms, KPZ universality class.

\paragraph{2020 MSC classes.} Primary 60G50, 60K35; secondary 60G40, 60F17.

\section{Introduction}
The Airy line ensemble is a random collection of continuous ordered paths; it plays an important role in the \emph{Kardar-Parisi-Zhang} (KPZ) universality class due to its relationship to the (conjecturally) universal limiting objects, the directed landscape and the KPZ fixed point. 
%$\mathcal A_1 > \mathcal A_2 > \ldots$. 
The Airy line ensemble was first considered in Pr\"{a}hofer and Spohn~\cite{ps2002} where they proved convergence in a suitable scaling limit of the finite dimensional distributions of the multi-layer polynuclear growth model. The marginal distribution of the top line at a single time is governed by the Tracy-Widom GUE distribution from random matrix theory~\cite{BDJ}. 
The type of convergence to the Airy line ensemble was then extended by Corwin and Hammond~\cite{corwin_hammond} in the setting of non-intersecting Brownian bridges to a 
functional limit theorem. This established that the paths in the Airy line ensemble are locally absolutely continuous with respect to Brownian motion and remain in the same order for all time. 
The Airy line ensemble plays an important role in the construction of the directed landscape, a universal limit object constructed in Dauvergne, Ortmann and Vir\'{a}g~\cite{dov}. Moreover, the KPZ fixed point, constructed in Matetski, Quastel and Remenik \cite{MQR}, is then related to the directed landscape by a variational formula.
These relationships motivate the study of universality properties of the Airy line ensemble.

We consider universality properties for the Airy line ensemble in the context of continuous-time
random walks, for a general class of increment distributions, which are conditioned 
to stay ordered using $h$-transforms. The resulting $h$-transform has been given a variety of different names: \emph{ordered random walks}, \emph{non-colliding processes}, \emph{vicious random walks} or \emph{non-intersecting paths}. Nearest-neighbour ordered random walks provide an example where exact formulas are present, for a survey see \cite{konig2005}, however, in the general setting there are no explicit exact formulas. 
For a general class of increment distributions and in a discrete-time setting, ordered random walks were first constructed in Eichelsbacher and K\"{o}nig~\cite{eichelsbacher_konig} under non-optimal moment conditions and then with optimal moment conditions in Denisov and Wachtel~\cite{denisov_wachtel10}.
Ordered random walks are an example of random processes evolving under various types of constraints. Many different processes of this type have been studied, for example 
\cite{persistence_survey, denisov_wachtel15, fayolle}, often motivated by relationships to queueing networks, statistical mechanics or combinatorics.

The Airy line ensemble has been studied recently from a variety of different perspectives. The top particle in the Airy line ensemble is the $\text{Airy}_2$ process; we refer to~\cite{remenik_quastel} for a review of the Airy processes and discuss here papers which treat the whole line ensemble. One focus has been on developing characterisations of line ensembles with the Brownian Gibbs property in terms of the behaviour of the top path. The strongest result of this type is~\cite{aggarwal_huang2} following earlier work and conjectures in~\cite{corwin_hammond, corwin_sun, dimitrov_matetski}. These characterisations are expected to be useful in proving that discrete models converge to the Airy line ensemble even in settings where the discrete model may fail to have the Markov property prior to taking the scaling limit. 
Other recent themes have been the development of quantifiable descriptions of the locally Brownian nature of the Airy line ensemble~\cite{hammond} or understanding the spacings between particles in~\cite{dv}.
Variations of the Airy line ensemble have been studied, for example where the ordering condition is weakened to a repulsive interaction in the KPZ line ensemble~\cite{corwin_hammond2}. 
Functional convergence of some \emph{integrable} examples of ordered random walks to the Airy line ensemble 
was proved in~\cite{dnv}.

%
%Brownian last passage percolation is related to the top particle in a collection of independent Brownian motions conditioned using a Doob $h$-transform to stay in the same order for all time. 
%A variety of interpretations of this identity, often having their root in the 
%Robinson-Schensted-Knuth Correspondence, are given in \cite{baryshnikov, gravner_tracy_widom, o_connell2002, warren}. 

%Aside from their relationship to the KPZ universality class, ordered random walks have been used
%in the physics literature, for example to study fibrous structures by de Gennes. 
% and for a variety of further applications discussed in Fisher \cite{fisher}.

In a different direction, another important interpretation of non-intersecting Brownian motions comes from random matrix theory. 
Dyson studied the process given by the real eigenvalues of a Hermitian Brownian motion. 
The evolution is given by a system of stochastic differential equations, named Dyson Brownian motion 
(here $\beta = 2$), which agrees
with independent Brownian motions conditioned not to collide using an $h$-transform, see~\cite{grabiner}. 
We refer to this process as non-intersecting Brownian motions to distinguish from Dyson Brownian motion with general $\beta$.
Universality questions have been considered within random matrix theory over many years, for example see \cite{erdos_yau, johansson_universality, tao_vu} and the references within.
The results that are of most relevance to this setting are those relating to the Deformed Gaussian Unitary Ensemble that corresponds to non-intersecting Brownian motions started from general initial conditions. 
A variety of universality properties in the edge scaling limit are known for this model~\cite{peche, Shcherbina1} that include large classes of initial conditions but with the restriction of only considering the marginal distribution at a fixed time. Convergence of finite dimensional distributions is shown in~\cite{claeys} 
which shows that the Airy line ensemble can also appear in the interior of the spectrum. 

%
%Ordered random walks can be separated into the with nearest-neighbour jumps provide two examples that are referred to as \emph{exactly solvable}. Focusing on the Brownian case, the 
%Vandermonde determinant is a harmonic function for independent Brownian motions killed if they become disordered. The transition density of the killed process is given by the Karlin McGregor formula. 
%These two facts allow the distribution of many statistics of interest to be expressed in terms of determinants or Fredholm determinants. These formulas are tractable for asymptotics 
%in the limit as the number of non-colliding Brownian motions grows to infinity. 
%Beyond the exactly solvable setting, ordered random walks were considered in \cite{eichelsbacher_konig, 
%denisov_wachtel10}. 
%The harmonic function was first constructed in \cite{eichelsbacher_konig} and then under optimal moment conditions in \cite{denisov_wachtel10}. \\
%Convergence to Dyson Brownian motion at fixed $d$.

The existing results which are closest to the present paper are 
universality properties of last passage percolation in `thin' rectangles considered in Bodineau and Martin~\cite{bodineau_martin}
and Baik and Suidan~\cite{baik_suidan1}; and universality properties of ordered random walk bridges in Baik and Suidan~\cite{baik_suidan2}. 
Define $T(n, k)$ to be the last passage percolation time at the point $(n, k) \in \mathbb{Z}_+^2$ constructed from a random environment made up of i.i.d.~random variables for a large class of distributions. 
Then~\cite{baik_suidan1, bodineau_martin} show that $T(n, \lfloor n^a \rfloor)$ converges in a suitable scaling limit to the Tracy Widom GUE distribution whenever $a < 3/7$. The main technique in the proof is an application of the Koml\'{o}s-Major-Tusn\'{a}dy (KMT) coupling of random walks and Brownian motions.
Baik and Suidan applied the same technique to study a slowly growing number $d$ of non-intersecting random walk bridges for a general class of increment distributions. The bridge condition is imposed at time $0$ and $2$, there are $N$ random walk steps and the positions are evaluated at time $1$. They showed convergence of the top particle in a scaling limit to the Tracy Widom GUE distribution. The application of the KMT coupling is less accurate in this setting due to the need to account for the small spacings between particles.
As a result, the convergence in~\cite{baik_suidan2} requires the condition $N \geq (2d)^{8d^3 + 4d^2}$. 

The aim of our paper is to study a growing number $d$ of continuous-time random walks conditioned to stay ordered for a general class of increment distributions and to prove functional convergence of the top particles in an edge scaling limit to the Airy line ensemble. We require the number of ordered random walks to grow sufficiently slowly in the sense that
$d \leq T^{a}$ where $T$ is a parameter appearing in the edge scaling limit corresponding to the expected number of random walk steps taken.
One of the main ideas in the proof is an analysis of the harmonic function for ordered random walks. In particular, we construct a superharmonic function by using properties of likelihood ratio 
ordering that can be used to give uniform estimates in $d, T$. 
This immediately gives an upper bound on the harmonic function and plays a useful role in establishing a lower bound. 
This understanding of the harmonic function can be combined with couplings and estimates on how quickly ordered random walks separate as well as their behaviour up until the time that they have separated in order to establish convergence in an edge scaling limit to the Airy line ensemble. We also consider the law of large numbers and fluctuations of linear statistics 
and prove that these agree with non-intersecting Brownian motions in a regime $d \leq T^{a'}$ with a different exponent $a'.$
The exponents $a, a'$ in our results are not optimal and 
we discuss possible ways to improve this. 

In Section~\ref{results} we state our main results. In Section~\ref{sec:superharmonic} we construct a superharmonic function for ordered random walks. In Section~\ref{sec:lower_bound} we prove a lower bound for the harmonic function of ordered random walks. In Section~\ref{sec:convergence} we prove convergence of ordered random walks to the Airy line ensemble in an edge scaling limit as well as showing that the law of large numbers and fluctuations of linear statistics agree with non-intersecting Brownian motions. In the Appendix, we provide further details on likelihood ratio ordering which may be convenient for the reader.

%Baik and Suidan proved that the one-point distribution of slowly growing numbers of non-intersecting random walk bridges converges in a certain scaling limit to the Tracy Widom GUE distribution. This used two main ideas (i) the Komlos-Major-Tusnady coupling of random walks and Brownian motions to control the errors between random walk bridges and Brownian bridges started away from zero and (ii) to prove that the exact solvability of non-colliding Brownian bridges extends to general initial conditions away from zero. One challenge is the need to control errors in the coupling for a growing number of particles and this led to the constraint on 
%$d, n$ that $n \geq (2d)^{8d^3 + 4d^2}$.
%The constraint on $d, n$ can be compared to work of Bodineau, Martin and Baik, Suidan on last passage percolation in thin regions. In that case an analogous KMT coupling leads to a significantly weaker constraint on the relationship between $d$ and $n$ as $d \leq n^{b}$ 
%for $b < 3/7$. 

\section{Statement of results} 
\label{results}

\subsection{Background and notation for ordered random walks}
\label{sec:notation}
Let $(e_{ij})_{i \geq 1, 1 \leq j \leq d}$ be a collection of i.i.d.~exponential random variables with rate $1$ and
let $(X_{ij})_{i \geq 1, 1 \leq j \leq d}$ be a collection of
i.i.d.~random variables. 
Let $x = (x_1, \ldots, x_d)$ and $(S(t))_{t \geq 0} = (S_1(t), \ldots, S_d(t))_{t \geq 0}$.
For each $j = 1, \ldots, d$ and $t \geq 0$ let 
\[
S_j(t) = x_j + \sum_{i \geq 1} X_{ij} 1_{\{\sum_{k=1}^i e_{k j} \leq t\}}.
\]
Let $W^d = \{(x_1, \ldots, x_d) \in \mathbb{R}^d: x_1 < x_2 < \ldots < x_d\}$ denote the Weyl chamber. 
% {\color{blue}SUGGESTION: It would be good to use strict inequalities in the definition of the Weyl chamber, as they are more standard.} 
%We define a $d$-dimensional random walk $(S(n))_{n \geq 0} = (S_1(n), \ldots, S_d(n))_{n \geq 0}$ started from $S(0) = x = (x_1, \ldots, x_d) \in W^d$ by 
%$S_j(n) = x_j + \sum_{i=1}^n X_{ij}$ for $n \geq 1$ and $j = 1, \ldots, d$. 
Define the stopping time 
\[ 
\tau  = \inf\{t \geq 0:  S(t) \notin W^d\}.
\]
Let $d > 3$. Suppose that $\E(X_{ij}) = 0$ (which can be achieved by centering) and $\E(X_{ij}^{d-1}) < \infty$. 
%first in~\cite{eichelsbacher_konig} (with stronger moment conditions) and then in~\cite{denisov_wachtel10} with optimal moment conditions 
There exists a
harmonic function $V: W^d \rightarrow \mathbb{R}_{> 0}$ such that for all $t > 0$,
\[
\E_x[V(S(t)); \tau>t]= V(x) 
\]
which is strictly positive and integrable on $W^d$.
This function is given by 
\begin{equation}
\label{V_defn}
  V(x) = \Delta(x)-\E_x[\Delta(S(\tau))] = \lim_{t \rightarrow \infty} \E_x[\Delta(S(t)) 1_{\{\tau > t\}}]
\end{equation}
where \[
\Delta(x) = \prod_{1 \leq i < j \leq d}(x_j - x_i)
\] is the Vandermonde determinant. 
A direct computation shows that $V$ satisfies the harmonic condition and the main difficulty is to show that $V$ is finite and $V(x) > 0$ for all $x \in W^d$.
This was shown in~\cite{denisov_wachtel10, eichelsbacher_konig} in the discrete-time case; the continuous-time setting here can be proved in a similar way. Alternatively, we can use the methods developed in this paper: $V$ is integrable by Proposition~\ref{superharmonic} and $V(x) \sim \Delta(x)$ as $x_{j+1}-x_j \rightarrow \infty$ by Proposition~\ref{lem:asymptotics.v} which is sufficient to show that $V(x) > 0$ for all $x \in W^d$, see 
the proof of Proposition 4e in~\cite{denisov_wachtel10}. 
Once these conditions have been established $V$ can be used to construct
a process in $W^d$ by a Doob $h$-transform. We refer to this process as an 
\emph{ordered random walk} and let $\mathbf{E}^{(V)}$ denote the expectation under this change of measure: for bounded measurable $\phi : \mathbb{R}^n \rightarrow, \mathbb{R}$
\[
\E^{(V)}_x[\phi(S(t))] = \E_x\left[\phi(S(t)) \frac{V(S(t))}{V(x)}; \tau > t\right].
\]
Note that the evolution of this process only involves one co-ordinate jumping at a time.
It was shown in~\cite{denisov_wachtel10, eichelsbacher_konig} (in a discrete-time setting) that this $h$-transform agrees with the process that is conditioned to stay in $W^d$ up to a time $t$ followed by taking the limit $t \rightarrow \infty$. 
Moreover,~\cite{denisov_wachtel10, eichelsbacher_konig} show that for fixed $d$, ordered random walks converge weakly to
non-intersecting Brownian motions. We consider the case of
growing numbers of random walks when 
$d=d(n)$ grows to infinity. 

Non-intersecting Brownian motions can be defined in an analogous way. 
Let $B_1, \ldots, B_d$ be independent Brownian motions started from $x_1, \ldots, x_d$ respectively and let 
$(B(t))_{t \geq 0} = (B_1(t), \ldots, B_d(t))_{t \geq 0}.$
Define $\tau^{\text{bm}} = \inf\{t \geq 0: B(t) \notin W^d\}.$
The Vandermonde determinant $\Delta$ satisfies $\E_x[\Delta(B(t)); \tau^{\text{bm}} > t]
= \Delta(x)$ and non-intersecting Brownian motions can then be constructed as a Doob $h$-transform. For bounded measurable $\phi : \mathbb{R}^n \rightarrow \mathbb{R}$ let
\[
\E_x^{(\Delta)}[\phi(B(t))] = \E_x\left[\phi(B(t)) \frac{\Delta(B(t))}{\Delta(x)}; \tau^{\text{bm}} > t\right].
\]
We refer to \cite{konig2002} for a survey of the above and the definition of the entrance law for non-intersecting Brownian motions started from zero.

%
%As $d$ grows it is convenient to apply a rescaling and to formulate the co-ordinates at a fixed time as forming a point process.
%The measure corresponding to the initial condition is given by \[
%\mu_0^{(n)} = \frac{1}{d_n} \sum_{i=1}^{d_n} \delta_{x_i/\sqrt{nd_n}}.
%\]
%For each $k \geq 1$,  \[
%\mu_{k}^{(n)} =  \frac{1}{d_n} \sum_{i=1}^{d_n} \delta_{S_i(k)/\sqrt{kd_n}}.
%\]
%%Let $\rho^{(m)}_{k, n}$ be the $m$-point joint intensities of $\mu_{k}^{(n)}$ for each $k \geq 0$.
%%The one-point intensity will be denoted $\rho_{k, n} = \rho^{(1)}_{k, n}$ and
%%$\rho^{(m)}_n = \rho^{(m)}_{n, n}$.
%The Stieltjes transform of a measure $\nu$ is denoted by
%\[
%f_{\nu}(z) = \int_I \frac{\nu(dt)}{z-t}, \qquad z \in \mathbb{C} \setminus I.
%\]
%
%Convergence to limiting measure 
%Bulk
%
%For our approximations between random walks and Brownian motion it is convenient to consider the set where the random walk co-ordinates are separated
%\[
%W_{n,\varepsilon} =\{x\in W \colon 
%x_j-x_{j-1}\ge n^{1/2-\varepsilon}, j=2,\ldots,d \}.  
%\]

\subsection{Class of random walks considered.} 
\label{sec:rw_assumptions}
We suppose that $\E(X_{ij}) = 0$ and $\text{Var}(X_{ij}) = 1$.
The simplest to state conditions under which our results hold are the following:
\begin{enumerate}
\item There exists $\delta_0 > 0$ such that $\E(e^{\delta X_{ij}}) < \infty$ for all $\delta \in (-\delta_0, \delta_0)$.
\item $X_{ij}$ has a log-concave density $f$, that is 
\[
f(\theta x +(1-\theta)y)\ge f(x)^\theta f(y)^{1-\theta}, \quad x,y\in \mathbb R, \theta \in [0,1].  
\]
\end{enumerate}
In Section~\ref{sec:superharmonic} we replace condition (ii) by a more general but harder to state condition, 
where a density is not needed. 
Many common distributions satisfy conditions (i) and (ii).

% eg.~normal, exponential, Laplace, gamma with shape $\geq 1$ and beta with both shape parameters $\geq 1$.

%
%\subsection{Process level approximation of ordered random walks and Dyson Brownian motion}
%\begin{theorem}[Process level approximation of ordered random walks and Dyson Brownian motion]
%Let $f : W^d \rightarrow \mathbb{R}$ be a uniformly continuous function
%
%\end{theorem}

%
%\subsection{Approximation of the harmonic function} 

\subsection{Convergence of ordered random walks to the Airy line ensemble}

The parabolic Airy line ensemble arises as the edge scaling limit of non-intersecting Brownian motions. 
It can be defined as a sequence $\L= (\L_1, \L_2, \ldots)$ of random continuous functions which are ordered as $\L_1(t) > \L_2(t) > \ldots$ for all $t \in \mathbb{R}$ and with a specified determinantal formula for each $t_1 < t_2 < \ldots < t_k$ for the distribution of
$\{\L_i(t_j) : i \in \mathbb{N}, j \in \{1, \ldots, k\}\}.$ The exact distribution is not needed here and can be found in~Definition 2.1 of~\cite{dnv}.
The Airy line ensemble is a stationary process related to the parabolic Airy line ensemble as 
$\mathcal{A}(t) = \L(t) + t^2$.

Define a collection of non-intersecting Brownian motions ordered as $B_1 \leq \ldots \leq B_N$ all started from zero. 
Let 
\begin{align}
\label{edge_scaling_limit}
D_i^N(t) = 
N^{1/6}\left((B_{N-i+1}(1 + 2N^{-1/3 }t) - 2N^{1/2}
- 2N^{1/6}t\right).
\end{align}
% = \frac{N^{1/6}}{2^{1/2}}(1-N^{-1/3 }t)B_i\left(\frac{1 + N^{-1/3 }t}{1 - N^{-1/3 }t}\right) - 2^{1/2}N
% \]
% bridges with diffusion parameter $2$ by $(W_1, \ldots, W_N)$ with $W_i(-N) = W_i(N) = 0$.
% For $t \in [-N^{1/3}, N^{1/3}]$ let $D_i^N(t) = N^{-1/3}(W_i(N^{2/3} t) - 2^{1/2} N)$. 
Let $D^N(k, A)$ denote the set of the top $k$ lines at times in $A$.
For any $k \geq 1$ and $L > 0,$ Corwin and Hammond~\cite{corwin_hammond} showed that the line ensemble $D^N(k, [-L, L])$ converges weakly as $N \rightarrow \infty$ to $\L$ restricted to $\{1, \ldots, k\} \times [-L, L].$
This was proved for non-intersecting Brownian bridges in Theorem 3.1 of 
\cite{corwin_hammond} (note the definition of the parabolic Airy line ensemble differs by a factor of $2^{1/2}$). This can be converted into a statement about non-intersecting Brownian motions by the usual transformations between Brownian bridges and Brownian motions. The convergence in the form stated here can be found in Theorem 2.1 of \cite{dov}. Note that the Airy line ensemble is absolutely continuous with respect to Brownian motions with diffusivity parameter $2$ meaning that the quadratic variation grows as $2(t-s)$ in an 
interval $[s, t].$ The original Brownian motions have diffusivity parameter $1$ but this is transformed to remove factors of $2^{1/2}$ in the scaling limit.

%For $t \in [-N^{1/3}, N^{1/3}]$,
%\[
%\left(\left( \frac{N^{1/6}}{2^{1/2}}(1-N^{-1/3 }t)B_i\left(\frac{1 + N^{-1/3 }t}{1 - N^{-1/3 }t}\right) - 2^{1/2}N \right) \right)_{i=1}^k 
%\]
%converges weakly to $\L$ restricted to $\{1, \ldots, k\} \times [-T, T]$.
%
%For $t \in [-T, T]$ can approximate 
%\[
%\frac{1 + N^{-1/3 }t}{1 - N^{-1/3 }t} = (1 + 2N^{-1/3}t + O(N^{-2/3}))
%\]
% This can be stated in terms of non-intersecting Brownian motions with diffusion parameter $2$ as \[
% \frac{N^{1/6}}{2^{1/2}} \left( B_i(1+2N^{-1/3} t) - 2 N^{1/2}\right) 
% \]
% converges weakly to $\L$ restricted to $\{1, \ldots, k\} \times [-L, L]$.
%Non-intersecting Brownian motions have Brownian scaling relations, which can be seen from their matrix representation, so
Furthermore let $N = \lfloor T^a \rfloor$ (we omit $\lfloor \cdot \rfloor$ to simplify the notation) and use Brownian scaling to rewrite \eqref{edge_scaling_limit} as
\begin{align}
T^{a/6-1/2} \left(B_{N-i+1}(T+2T^{1-a/3} t) - 2 T^{1/2+a/2}-2T^{1/2+a/6}t\right) 
%\left( \frac{n^{a/6}}{2^{1/2}} \left(B_i(1+2n^{-a/3} t) - 2 n^{a/3}\right) \right)_{i=1}^k
\end{align}
%For random walks for $t \in [-T, T]$, define
%\[
%X_{i}(t) =  \frac{n^{a/6}}{2^{1/2}}\left( \frac{S_i([n+2n^{1-a/3} t])}{n^{1/2}} - 2 n^{a/2} \right).
%\]
This now motivates the following definition of the edge scaling limit for ordered random walks. 
Define for $t \in [-L, L]$ and $i = 1, \ldots, k$,
\[
X_{i}^T(t) = T^{a/6-1/2}\left( S_{d-i+1}(T+2T^{1-a/3} t) - 2 T^{1/2+a/2} 
- 2T^{1/2 + a/6} t\right).
\]
The conventions in the existing literature are that the ordered random walks are ordered as $S_1(t) \leq \ldots \leq S_d(t)$ while in the Airy line ensemble $\L_1(t) > \L_2(t) > \ldots.$ We stick with both of these conventions at the expense of needing to define $X$ in terms of $S$ in a reversed order.

Let $X^T(k, A)$ denote the top $k$ lines at times in $A \subset \mathbb{R}$
as an element of $D(k, A)$, the set of $k$-tuples of c\`{a}dl\`{a}g functions on $A$ with the $\sigma$-algebra generated by the finite-dimensional evaluation maps.
We consider the uniform metric on $D(k, A)$,
\begin{equation}
    \label{rho_eqn}
    \rho(x, y) = \sup_{1 \leq j \leq k} \sup_{t \in A} \lvert x_k(t) - y_k(t) \rvert
\end{equation}
for $x = (x_1(t), \ldots, x_k(t))_{t \in A}, y = (y_1(t), \ldots, y_k(t))_{t \in A} \in D(k, A)$.   
We consider weak convergence in this setting, in the sense of Section 6 and Section 15 of \cite{billingsley}.
Alternatively, we could restrict $X^T(k, A)$ by taking a linear interpolation to the set $C(k, A)$ of $k$-tuples of continuous functions on $A$, with the uniform topology, and weak convergence would follow in this setting after some adaptions to
Proposition \ref{prop.coupling}. 

%Let $\Sigma$ be an interval of $\mathbb{Z}$ and $\Lambda$ an interval of $\mathbb{R}$. 
%Let $X$ be the set of continuous functions $f : \sigma \times \Lambda \rightarrow \mathbb{R}$ endowed 
%with the topology of uniform convergence on compact subsets of $\Sigma \times \Lambda$. 
Our first result is the following.

\begin{theorem}
\label{thm:airy}
Suppose the conditions in Section~\ref{sec:rw_assumptions} hold.
%Let $f$ be a bounded continuous functional from $X$ to $\mathbb{R}$. 
For $d = \lfloor T^a \rfloor$ and $a < 3/50$ and all starting points $\lvert x_j \rvert = o(T^{1/2-a/6})$  
the line ensemble $X^T(k, [-L, L])$ converges weakly as $T \rightarrow \infty$ to $\L$ restricted to $\{1, \ldots, k\} \times [-L, L].$
%\[
%\E[f(X_1^n, \ldots, X_k^n)] \rightarrow \E[f(\L)], \quad n \rightarrow \infty. 
%\]
\end{theorem}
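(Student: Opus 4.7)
The strategy is to reduce the theorem to the known edge convergence of non-intersecting Brownian motions to $\L$, proved by Corwin--Hammond and cited just above the statement. The reduction has two components: a replacement of the harmonic function $V$ of ordered random walks by the Vandermonde $\Delta$ inside the Doob $h$-transform (so that the process looks like non-intersecting Brownian motion conditioned by $\Delta$), and a Koml\'os--Major--Tusn\'ady coupling of the underlying random walk to Brownian motion.

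First I would absorb a short initial time interval $[0,T_0]$ with $T_0 \ll T^{1-a/3}$, but $T_0$ large enough (a small polynomial power of $T$) that during this time the coordinates of $S$ separate to typical spacings much larger than those seen in $\L$ after the edge rescaling. For such well-separated configurations, Proposition~\ref{lem:asymptotics.v} gives $V(S(T_0))/\Delta(S(T_0)) = 1 + o(1)$ with high probability; the upper bound on $V$ comes from the superharmonic function of Section~\ref{sec:superharmonic}, and the matching lower bound from Section~\ref{sec:lower_bound}. Because $|x_j| = o(T^{1/2-a/6})$ and $T_0 \ll T^{1-a/3}$, this transient phase contributes nothing on the edge scale, while after it the process has been reduced to (approximately) a non-intersecting-type conditioning driven by $\Delta$.

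Starting from $S(T_0)$ I would invoke KMT coordinatewise, using the exponential moment in condition (i), to construct independent Brownian motions $B_j$ on $[T_0, T + 2 T^{1-a/3} L]$ with $\sup_t |S_j(t) - B_j(t)| = O(\log T)$ with high probability. A union bound over the $d = T^a$ coordinates preserves this bound, and under the spatial rescaling $T^{a/6-1/2}$ the coupling error is asymptotically negligible on the edge scale. The ordered random walk has density $V(S(t))/V(S(T_0))$ on $\{\tau > t\}$, whereas the non-intersecting Brownian motion starting from $S(T_0)$ has density $\Delta(B(t))/\Delta(S(T_0))$. If these two $h$-transform densities can be compared to within a factor $1 + o(1)$, then the Corwin--Hammond convergence for $B$ transfers to $S$ via the continuous mapping theorem.

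The main obstacle, and the source of the non-optimal exponent $3/50$, is precisely this quantitative comparison of $h$-transforms. The ratio $\Delta(B(t))/\Delta(S(t))$ is a product of $\binom{d}{2}$ factors whose relative errors scale like the KMT error divided by the smallest inter-particle gap rather than the typical one, and the minimum gap in a system of $d = T^a$ conditioned particles can be substantially smaller than the typical one. Combined with the comparison of $V(S(t))/V(S(T_0))$ and $\Delta(S(t))/\Delta(S(T_0))$ on the non-typical event (controlled via the superharmonic majorant and the tail bounds from Sections~\ref{sec:superharmonic}--\ref{sec:lower_bound}), requiring all such errors to be $o(1)$ uniformly in $t$ and in expectation fixes an explicit threshold on $a$, which the careful accounting carried out in Section~\ref{sec:convergence} pins down as $3/50$. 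Once these quantitative bounds are in place, the theorem follows from the continuous mapping theorem applied on the Brownian side.
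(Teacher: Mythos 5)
Your outline follows the paper's general strategy (separate, couple, swap $V$ for $\Delta$, invoke Corwin--Hammond), but it leaves the two decisive steps unresolved, and in one place asserts without justification something that requires a dedicated lemma.

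First, you correctly identify the quantitative comparison of the $h$-transform densities as the main obstacle and note that a factorwise comparison of $\Delta(B(t))/\Delta(S(t))$ is controlled by the \emph{minimum} gap, which can be small --- but you then stop, deferring to ``careful accounting.'' The paper's resolution is structural, not an accounting exercise: by harmonicity of $V$ and optional stopping, the expectation $\E_x[f\,V(S(2T));\tau>2T]$ is rewritten as an expectation of $V(S(\rho_T))$ at the stopping time $\rho_T=\inf\{t\ge r^T_L: S(t)\in W_{T,\veps}\}$, so that $V$ and $\Delta$ are only ever compared on configurations where \emph{every} gap exceeds $T^{1/2-\veps}$. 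There the coupling error $T^\beta$ perturbs each of the $\binom{d}{2}$ factors by a relative $O(T^{\beta-1/2+\veps})$, giving the condition $2a+\beta<1/2-\veps$; the small-gap problem never arises. Relatedly, your ``with high probability'' treatment of the coupling event is insufficient: the bad event must be weighted by $V(S(t))$, which can exceed $V(x)$ by a factor $e^{c\,d^2\log t}$, so one needs stretched-exponential tails $e^{-T^{\beta/2}}$ with $\beta/2>a$ (Proposition~\ref{prop.coupling} and Lemma~\ref{lem:error_bounds1}), not a logarithmic KMT rate plus a union bound.

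Second, even granting the coupling, you end up with non-intersecting Brownian motion started from the random, spread-out configuration reached after the transient phase, whereas Corwin--Hammond applies to the entrance law from zero. Your claim that the transient phase ``contributes nothing on the edge scale'' is exactly what must be proved: one needs (a) that the extremal particles satisfy $\lvert M(\nu_T)\rvert+\lvert I(\nu_T)\rvert\le T^{\gamma-\delta}$ with the complementary event negligible after weighting by $V$ (Lemma~\ref{lem:error_bounds3}, which also forces the constraint $2\gamma-1+\veps>4a$ entering the exponent $3/50$), and (b) a mechanism to compare $\E_{y}^{(\Delta)}$ with $\E_0^{(\Delta)}$ for such $y$; the paper uses the Aggarwal--Huang monotonicity of Dyson Brownian motion (Lemmas~\ref{dyson_bm_monotonic} and \ref{lem:brownian_approx}) to sandwich between systems started at $\pm\theta$. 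Neither ingredient appears in your proposal, and without them the reduction to the known Brownian limit does not close.
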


We state a corollary involving convergence to the Tracy Widom GUE distribution. 
\begin{corollary}
With the assumptions of Theorem \ref{thm:airy}, 
\[
T^{a/6-1/2} \left( S_d(T)-2T^{1/2+a/2}\right) \stackrel{d}{\rightarrow} F_2
\]
where $F_2$ is the Tracy-Widom GUE distribution.
\end{corollary}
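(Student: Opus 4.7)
The plan is to deduce the corollary directly from Theorem~\ref{thm:airy} by specialising to the top line and evaluating at a single time. Take $k=1$ and any fixed $L>0$ (say $L=1$) in Theorem~\ref{thm:airy}. Under the stated hypotheses, the line ensemble $X^T(1,[-L,L])$ converges weakly in the space of continuous functions on $[-L,L]$ to $\mathcal{L}$ restricted to $\{1\}\times[-L,L]$.

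Next I would project to a single time. Weak convergence in $C([-L,L])$ implies convergence of finite-dimensional distributions via the continuous mapping theorem applied to the evaluation functional at $t=0$, so
\[
X_1^T(0) \;\stackrel{d}{\longrightarrow}\; \mathcal{L}_1(0).
\]
Reading off the definition
\[
X_{1}^T(t) = T^{a/6-1/2}\left( S_{d}(T+2T^{1-a/3} t) - 2 T^{1/2+a/2} - 2T^{1/2 + a/6} t\right),
\]
at $t=0$ this is precisely $T^{a/6-1/2}\bigl(S_d(T)-2T^{1/2+a/2}\bigr)$, which is the random variable appearing in the statement.

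Finally, I would identify the limit distribution. The parabolic Airy line ensemble and the (stationary) Airy line ensemble are related by $\mathcal{A}(t)=\mathcal{L}(t)+t^2$, so $\mathcal{L}_1(0)=\mathcal{A}_1(0)$. The top line $\mathcal{A}_1$ is the $\mathrm{Airy}_2$ process, and its one-point marginal at any fixed time is, by the result of~\cite{BDJ} and~\cite{ps2002}, the Tracy--Widom GUE distribution $F_2$. Combining the two displays yields the claimed convergence, and this is essentially the entire argument; there is no real obstacle beyond checking that the chosen scaling at $t=0$ matches the expression in the corollary, which it does verbatim.
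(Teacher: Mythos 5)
Your argument is correct and is exactly the deduction the paper intends (the paper leaves the proof of the corollary implicit): specialise Theorem~\ref{thm:airy} to $k=1$, evaluate at $t=0$ via the continuous mapping theorem, note $X_1^T(0)=T^{a/6-1/2}(S_d(T)-2T^{1/2+a/2})$, and use that $\mathcal{L}_1(0)=\mathcal{A}_1(0)$ has the Tracy--Widom GUE law.
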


%The Theorem relies on two steps. One step that was completed in Corwin and Hammond Theorem 3.1 is that $D^N(k, [-T, T])$ converges weakly as $N \rightarrow \infty$ to $\L$ restricted to $\{1, \ldots, k\} \times [-T, T].$ 
As stated above, this convergence is known for non-intersecting Brownian motions started from an entrance law at zero by~\cite{corwin_hammond}, see Theorem 2.1 of \cite{dov} for the exact statement we use.
The contribution of our paper is an approximation of ordered random walks by non-intersecting Brownian motions
that is valid for growing $d$. 
The main ingredient is the following approximation for the harmonic function $V$ given in~\eqref{V_defn}.
For any $\veps>0$, define 
\[
	W_{T,\veps}=\left\{x\in \mathbb{R}^d:x_{j+1}-x_{j}>T^{1/2-\veps},1\le j< d\right\}.
\]
\begin{proposition}\label{lem:asymptotics.v}
Suppose the conditions in Section~\ref{sec:rw_assumptions} hold, $a\in (0, 1/4)$ and $\varepsilon > 0$.
  Uniformly in $x\in W_{T,\varepsilon}$ and $d\le T^{a}$ with $2a < 1/2 - \veps$, 
  \[
  V(x)  \sim \Delta(x). 
  \]
\end{proposition}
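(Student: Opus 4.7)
The plan is to use the representation $V(x) = \Delta(x) - \E_x[\Delta(S(\tau))]$ from \eqref{V_defn}, reducing the proposition to the quantitative bound
\[
\bigl|\E_x[\Delta(S(\tau))]\bigr| \;=\; o(\Delta(x))
\]
uniformly over $x \in W_{T,\veps}$ and $d\le T^a$ with $2a<1/2-\veps$. The intuition is that because every consecutive gap of $x$ is at least $G:=T^{1/2-\veps}$, no crossing can occur before a time of order $G^2=T^{1-2\veps}$, and once one does, the overshoot of the crossing pair has bounded expectation (by the exponential moment assumption and elementary renewal theory), while the other $\binom{d}{2}-1$ Vandermonde factors stay close to their initial values. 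Heuristically $|\Delta(S(\tau))|/\Delta(x)$ should therefore be of order $1/G = T^{-(1/2-\veps)}$.

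To implement this, I would fix an intermediate time $T_0$ (say $T_0 = G^2/\log^2 T$) and split
\[
\E_x[\Delta(S(\tau))]=\E_x[\Delta(S(\tau));\tau\le T_0]+\E_x[\Delta(S(\tau));\tau>T_0].
\]
The first piece is handled by a union bound: for each pair, a large-deviation tail for $\max_{t\le T_0}\bigl(S_{j+1}(t)-S_j(t)-(x_{j+1}-x_j)\bigr)$, obtained for instance via a Koml\'os--Major--Tusn\'ady strong approximation together with the Gaussian tail, gives $\pr_x(\tau\le T_0)\le \binom{d}{2}\exp(-cG^2/T_0)$, which is super-polynomially small in $T$; combined with a crude polynomial bound on $|\Delta(S(\tau))|$ coming from the exponential moment hypothesis on the jumps, this term is negligible relative to $\Delta(x)$. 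For the second piece I would couple $S$ with a non-intersecting Brownian motion $B$ via KMT so that $\sup_{t\le T_1}\max_j|S_j(t)-B_j(t)|\le C\log T$ on an event of overwhelming probability for some $T_1\gg T_0$. Since $\Delta$ is harmonic for Brownian motion and vanishes on $\partial W^d$, the identity $\E_x[\Delta(B(t\wedge\tau^{\rm bm}))]=\Delta(x)$ is exact and $\Delta(B(\tau^{\rm bm}))=0$; transferring via the coupling and a Taylor expansion of $\Delta$ around the Brownian trajectory converts this into an estimate of the form $|\E_x[\Delta(S(\tau));\tau>T_0]|\le \Delta(x)\cdot O(d^2\log T/G)$ on the good coupling event.

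The main technical obstacle is the amplification of errors by the factor count. Because $\Delta$ is a product of $\binom{d}{2}$ terms, a per-pair multiplicative fluctuation of size $\delta$ gets amplified to $(1+\delta)^{\binom{d}{2}}$, so keeping the Vandermonde ratio close to $1$ requires $d^2\delta=o(1)$. The relevant $\delta$ is of order $1/G$ (coming from the ratio of the $\sqrt{T_0}\asymp G$ size of typical displacements, or the $\log T/G$ size of the KMT coupling error, to the gap $G$), so the budget $d^2/G=o(1)$ forces $T^{2a}\ll T^{1/2-\veps}$, i.e.\ exactly the hypothesis $2a<1/2-\veps$. Carefully tracking this product, and in particular handling the rare event that a single large jump carries one coordinate past several others simultaneously (so that more than one Vandermonde factor flips sign at $\tau$), is where the bulk of the technical work sits.
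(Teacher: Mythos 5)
Your proposal correctly isolates the key budget $d^2/G=o(1)$ with $G=T^{1/2-\veps}$, i.e.\ the hypothesis $2a<1/2-\veps$, but there are two genuine gaps, and the second is fatal to the route as described. The first is the unbounded time horizon: $\E_x[\Delta(S(\tau))]$ integrates over all values of $\tau$, and a walk surviving to a large time $t$ has gaps of order $\sqrt{t}$, so $|\Delta(S(\tau))|$ on $\{\tau>t\}$ is far larger than $\Delta(x)$ --- the dangerous contribution comes from large $\tau$, not small $\tau$. A single KMT window $[0,T_1]$ cannot cover this; the coupling error grows with the horizon, and for any fixed $T_1$ the term $\E_x[\Delta(S(\tau));\tau>T_1]$ is left uncontrolled, yet controlling it is essentially equivalent to the proposition. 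The paper resolves this with a multiscale iteration: times $t_k$ with $t_{k+1}^{1-\delta}=t_k$, a repulsion estimate (Lemma~\ref{lem:repulsion}) showing a surviving walk re-enters $W_{t_{k+1},\delta}$ by time $t_k$ up to a super-polynomially small error, and a one-scale estimate (Lemma~\ref{lem6}) giving a multiplicative loss $(1-ct_{k+1}^{-\gamma})$ whose infinite product converges. Your proposal has no analogue of this iteration.

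The second gap is that the coupling/Taylor step fails exactly at the exit time. At $\tau$ the crossing factor $S_{j+1}(\tau)-S_j(\tau)$ equals minus the overshoot, which is $O(1)$; a positional coupling error of size $\log T$ therefore produces a relative error of order $\log T$ on that factor, so $\Delta(S(\tau))$ cannot be compared multiplicatively to anything evaluated on the Brownian path, and the identity $\Delta(B(\tau^{\mathrm{bm}}))=0$ transfers no information. The paper's one-scale lemma avoids this entirely and uses no Brownian coupling: it truncates the increments at $t^{\delta}$, stops at the first time $T$ two adjacent walks come within $t^{\delta}$ (hence before any crossing), extracts the small factor explicitly via $\Delta(S'(T))\le t^{\delta}\,u(S'(T))$ with $u(x)=\Delta(x^+)-\Delta(x)$, and closes the estimate by optional stopping for the harmonic function $u$, using $u(x)/\Delta(x)\lesssim d^2 t^{-(1/2-\veps)}$ on $W_{t,\veps}$. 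Note finally that the paper obtains the upper bound $V\le(1+o(1))\Delta$ by a separate device, the superharmonic function $h$ of Proposition~\ref{superharmonic} combined with Lemma~\ref{lem.delta.h}; your single estimate $|\E_x[\Delta(S(\tau))]|=o(\Delta(x))$ would subsume both bounds, but only once the two gaps above are filled.
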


The upper bound follows from the construction of a supermartingale in Section~\ref{sec:superharmonic}. 
The lower bound is then established in Section~\ref{sec:lower_bound}.

Our focus is on universality with respect to the law of the random walks where the most challenging case is when all particles are started from zero. 
  The arguments we give to approximate functionals of ordered random walks 
  by non-intersecting Brownian motions hold more generally, including when the limiting object is not the Airy line ensemble. In Section \ref{sec:LLN} we state two results involving macroscopic limits. Equally our methods could also be applied to other microscopic limits such as the Dyson sine process, non-intersecting Brownian motions with outliers or the Pearcey process. 
 It is worth noting that the convergence of non-intersecting Brownian motions to a limiting object such as the Airy line ensemble is only known in particular cases, 
 e.g. for some special choices of initial conditions or in the sense of finite-dimensional distributions.

The asymptotics in Proposition~\ref{lem:asymptotics.v}
could also be combined with papers~\cite{peche, claeys, Shcherbina1} that prove convergence in the edge scaling limit of the one-point or finite-dimensional distributions of non-intersecting Brownian motions to the Airy line ensemble for a more general class of initial conditions.
 If we have $x \in W_{T, \veps}$ in addition to the conditions for Theorem~\ref{thm:airy}  then we can improve the value of $a$. The second part of the proof of Theorem~\ref{thm:airy} and thus the conditions for Lemma~\ref{lem:error_bounds3} are not needed. If we wanted to relax the condition $\lvert x_j \rvert = o(T^{1/2-a/6})$ then we could combine the 
approximation arguments used here with the results of~\cite{peche, claeys, Shcherbina1}.
Note that~\cite{peche, Shcherbina1} apply only at the level of marginal distributions rather than at the process level. The results of~\cite{claeys} apply to finite-dimensional distributions which can then be combined with characterisations of the Airy line ensemble in terms of its finite dimensional distributions, e.g.~\cite{dimitrov_matetski}.

\subsection{Law of large numbers and fluctuations of linear statistics}
\label{sec:LLN}

We now consider macroscopic properties of the system of ordered random walks. In particular, we show that the law of large numbers and fluctuations of linear statistics agree with the case of non-intersecting Brownian motions. 
Let $D([\delta,1])$ denote the set of c\`{a}dl\`{a}g functions with the sigma-algebra generated by the finite-dimensional evaluation maps and the uniform metric
$\rho$ given in \eqref{rho_eqn}.
Let $\delta > 0$ and define 
for $f:D([\delta,1])\to \mathbb [0, 1]$,
\[
X_T(f) = \sum_{j=1}^d f(T^{-1/2-a/2} S_j(Tt) : t \in [\delta, 1])
\]
and 
\[
Y_T(f) = \sum_{j=1}^d f(T^{-1/2-a/2} B_j(Tt) : t \in [\delta, 1]).
\]

\begin{theorem}
\label{thm:brownian_approx}
Suppose the conditions in Section~\ref{sec:rw_assumptions} hold.
Let $\delta > 0$ and suppose that $f:D([\delta,1])\to \mathbb{R}$ is measurable, uniformly continuous
and $g : \mathbb{R} \rightarrow [0, 1]$ is uniformly continuous.
%Let $f$ be a bounded continuous functional from $X$ to $\mathbb{R}$. 
For $d = \lfloor T^a \rfloor$ with $a < 1/16$ and all starting positions $\lvert x_j \rvert = o(T^{1/2+a/2})$,
\[
\E_x^{(V)}[g\left(d^{-1}X_T(f)\right)] - \E_x^{(\Delta)}[g\left(d^{-1}Y_T(f)\right)] \rightarrow 0, \quad T \rightarrow \infty. 
\]
\end{theorem}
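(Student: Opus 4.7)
The plan is to represent both sides of the claim using their Doob $h$-transform forms,
\begin{align*}
\E^{(V)}_x\bigl[g(d^{-1}X_T(f))\bigr] &= V(x)^{-1}\, \E_x\bigl[g(d^{-1}X_T(f))\, V(S(T))\, 1_{\tau > T}\bigr], \\
\E^{(\Delta)}_x\bigl[g(d^{-1}Y_T(f))\bigr] &= \Delta(x)^{-1}\, \E_x\bigl[g(d^{-1}Y_T(f))\, \Delta(B(T))\, 1_{\tau^{\mathrm{bm}} > T}\bigr],
\end{align*}
where on the right-hand sides $S$ and $B$ are the unconditioned random walk and Brownian motion, and to build a coupling of $S$ and $B$ on $[0, T]$ under which the two weighted expectations are asymptotically identical. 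Two tools drive the comparison: a Koml\'os--Major--Tusn\'ady (KMT) coupling applied coordinate-by-coordinate, which yields $\max_{j \le d, t \le T} \lvert S_j(t) - B_j(t) \rvert \le C \log T$ with high probability thanks to the exponential moment from Section~\ref{sec:rw_assumptions}, and Proposition~\ref{lem:asymptotics.v}, which substitutes $\Delta$ for $V$ on $W_{T, \varepsilon}$.

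The first step would show that uniform continuity of $f$ together with the KMT bound and the rescaling factor $T^{-1/2-a/2}$ forces $|g(d^{-1}X_T(f)) - g(d^{-1}Y_T(f))| \to 0$ on the coupling event. The substantive step is then to replace the weight $V(S(T))/V(x)$ by $\Delta(B(T))/\Delta(x)$ and the indicator $1_{\tau > T}$ by $1_{\tau^{\mathrm{bm}} > T}$ while incurring only vanishing errors. I would choose an intermediate time $t_1$ with $t_1 \to \infty$ and $t_1 \ll T\delta$, apply the Markov property there, and verify that under both the $V$-law and the $\Delta$-law the configurations at time $t_1$ lie in $W_{T, \varepsilon}$ with overwhelming probability, for some $\varepsilon$ satisfying $2a < 1/2 - \varepsilon$. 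On this good event, Proposition~\ref{lem:asymptotics.v} gives $V(S(t_1)) \sim \Delta(S(t_1))$, the KMT coupling gives $\Delta(S(t_1)) \sim \Delta(B(t_1))$ via the product identity
\[
\frac{\Delta(S(t_1))}{\Delta(B(t_1))} = \prod_{1 \le i < j \le d} \frac{S_j(t_1) - S_i(t_1)}{B_j(t_1) - B_i(t_1)},
\]
and the indicators $1_{\tau>T}$ and $1_{\tau^{\mathrm{bm}}>T}$ match provided the KMT error is smaller than the running minimum spacing of $B$. Finally, the normalizations $V(x)$ and $\Delta(x)$ collapse against the matching well-separated component via the harmonic identities $V(x) = \E_x[V(S(t_1)) 1_{\tau > t_1}]$ and $\Delta(x) = \E_x[\Delta(B(t_1)) 1_{\tau^{\mathrm{bm}} > t_1}]$, so that no direct comparison of $V(x)$ with $\Delta(x)$ at the (possibly degenerate) starting point is needed.

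The main obstacle is quantifying three competing errors sharply: the multiplicative distortion in $\Delta(S)/\Delta(B)$, which is a product of $\binom{d}{2}$ factors of the form $1 + O(\log T / \mathrm{spacing})$ and therefore demands that $d^2 \log T$ be much smaller than a typical inverse spacing; the small-ball probability that some pair of the Brownian configuration is anomalously close at time $t_1$, controlled by Wigner-surmise-type bounds giving minimum spacings of order $\sqrt{t_1}\, d^{-5/3}$; and the need for the KMT error $C\log T$ to be below the minimum spacing of $B$ over the entire interval $[0, T]$ so that the non-intersection indicators match. Optimising $t_1$ and $\varepsilon$ subject to the constraint $2a < 1/2 - \varepsilon$ from Proposition~\ref{lem:asymptotics.v} produces the restriction $a < 1/16$; sharpening any one of these ingredients (for example a dynamical small-ball bound or an iterated coupling) would relax the exponent.
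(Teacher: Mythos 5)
Your overall architecture---the Doob $h$-transform representation, a strong coupling of $S$ and $B$, substitution of $\Delta$ for $V$ on $W_{T,\veps}$ via Proposition~\ref{lem:asymptotics.v}, and a Markov-property restart once the configuration is well separated---is the same skeleton the paper uses (the paper proves Theorem~\ref{thm:brownian_approx} by rerunning the proof of Theorem~\ref{thm:airy} with scaling factor $\gamma=1/2+a/2$). However, the step on which your argument pivots would fail as stated. You propose to identify $1_{\{\tau>T\}}$ with $1_{\{\tau^{\mathrm{bm}}>T\}}$ ``provided the KMT error is smaller than the running minimum spacing of $B$''. For the starting points the theorem covers ($|x_j|=o(T^{1/2+a/2})$, including all particles at the origin) the spacings at small times are arbitrarily small, so this event is negligible; and even after restarting at $t_1$ from a point of $W_{T,\veps}$ one would need a dynamical small-ball estimate showing no adjacent gap of the conditioned process dips below the coupling error anywhere on $[t_1,T]$---an estimate you name as the main obstacle but do not supply, and which the paper never proves. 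The paper avoids pathwise matching of the indicators altogether: on the coupling event the gaps of $S$ and $B$ differ by at most $2T^{\beta}$, so $\{\tau>\rho_T\}$ started from $x$ is contained in $\{\tau^{\mathrm{bm}}>\rho_T\}$ started from the shifted point $x^{+}$ (gaps enlarged by $2T^{\beta}$), with the reverse inclusion for $x^{-}$; the two shifted Brownian expectations are then reconciled with the one started from $0$ using the Aggarwal--Huang monotonicity of Dyson Brownian motion (Lemmas~\ref{lem:brownian_approx} and~\ref{dyson_bm_monotonic}). This one-sided sandwich is the key idea missing from your proposal.

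A second genuine gap is the treatment of the bad coupling event. A $C\log T$ KMT error is available only with polynomially small failure probability, but on the failure event one must still integrate the weight $V(S(\cdot))/V(x)$, and the relevant expectations are of size $\exp(Cd^{2}\log T)=\exp(CT^{2a}\log T)$ (see Lemmas~\ref{lem:hoelder} and~\ref{lem:error_bounds1}); one also needs the uniform lower bound $V(x)\ge C_1e^{-C_2d^{4+\delta}}$ of Lemma~\ref{lem.lower.rough} to divide by $V(x)$ at a degenerate start. This is why the paper works with a coupling error $T^{\beta}$ and stretched-exponential failure probability $e^{-T^{\beta/2}}$ (Proposition~\ref{prop.coupling}), and it is the interplay of $\beta$ with $\gamma=1/2+a/2$ and $\veps$ (through Lemmas~\ref{lem:error_bounds1}--\ref{lem:error_bounds3}) that actually yields $a<1/16$; asserting that the exponent ``comes out'' of optimising $t_1$ and $\veps$ does not substantiate it. Without the sandwich argument and without this quantitative control of the bad event, the proof does not close.
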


This characterises the distribution of $d^{-1}X_T(f)$ under $\mathbf{P}_x^{(V)}$.
If $f$ only depends on the path at time $1$ then the empirical measure of non-intersecting Brownian motions 
started from zero converges to the Wigner semicircle law almost surely.  
The evolution of the empirical measure in time is deterministic and has been studied by~\cite{rogers_shi}. Theorem~\ref{thm:brownian_approx} extends these results to the setting of ordered random walks. As in the case of the Airy line ensemble we focus on the case where the random walks are all started close to zero as this is the most difficult approximation argument.

The next property we consider is the fluctuations of linear statistics.  

\begin{theorem}
\label{thm:brownian_approx_clt}
Suppose the conditions in Section~\ref{sec:rw_assumptions} hold.
Let $\delta > 0$ and suppose that $f : D[\delta, 1] \rightarrow \mathbb{R}$ is measurable, uniformly continuous and $g : \mathbb{R} \rightarrow [0, 1]$ is uniformly continuous.
%Let $f$ be a bounded continuous functional from $X$ to $\mathbb{R}$. 
For $d = \lfloor T^a \rfloor$ with $a < 1/16$ and all starting positions $\lvert x_j \rvert = o(T^{1/2+a/2})$,
\[
\E_x^{(V)}\left[g\left(X_T(f) - \E^{(V)}_x [X_T(f)]\right)\right] - \E_x^{(\Delta)}\left[g\left(Y_T(f) - \E_x^{(\Delta)} [Y_T(f)]\right)\right] \rightarrow 0
\]
as $T \rightarrow \infty$.
\end{theorem}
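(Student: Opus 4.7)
The plan is to adapt the strategy used for Theorem \ref{thm:brownian_approx} to the centred setting. Writing both sides of the claim as free-path expectations via the standard $h$-transform identity
\begin{equation*}
\mathbf{E}_x^{(V)}[\Phi(S(\cdot))] = V(x)^{-1}\,\mathbf{E}_x\big[\Phi(S(\cdot))\,V(S(T))\,\ind_{\{\tau>T\}}\big],
\end{equation*}
and similarly for $\mathbf{E}_x^{(\Delta)}$ with $\Delta$ in place of $V$, the statement reduces to comparing two such free expectations evaluated at the test function $\Phi(p) = g\big(\sum_j f(T^{-1/2-a/2}p_j(Tt) : t\in[\delta,1]) - c\big)$, where $c$ denotes the appropriate centring constant.

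The first step is a \emph{separation phase}: for starting points $|x_j|=o(T^{1/2+a/2})$, one shows that by a time $\delta'T$ with $\delta'\ll\delta$ both processes enter some $W_{T,\veps}$ with high probability under the conditioned laws, using entrance-law estimates and the harmonic-function bounds developed in Sections~\ref{sec:superharmonic} and~\ref{sec:lower_bound}; the parameter $\veps$ is chosen so that $a/2<\veps<1/2-2a$, a non-empty range since $a<1/16$. Proposition~\ref{lem:asymptotics.v} then replaces the weight $V(S(T))$ by $\Delta(S(T))$ up to a $1+o(1)$ multiplicative factor on the high-probability event $\{S(T)\in W_{T,\veps}\}$, reducing matters to a comparison of free random walks and free Brownian motions, each weighted by the Vandermonde at the terminal time and by the indicator of the ordering event.

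Next I would introduce the Komlós--Major--Tusnády strong approximation, coupling each of the $d$ coordinates so that $\max_j\sup_{t\le T}|S_j(t)-B_j(t)|=O(\log T)$ on an event of probability $1-o(1)$. After the spatial rescaling by $T^{-1/2-a/2}$ this discrepancy is $O(T^{-1/2-a/2}\log T)\to 0$, and uniform continuity of $f$ gives a per-coordinate error of order $\omega_f(T^{-1/2-a/2}\log T)$. The new difficulty compared with Theorem~\ref{thm:brownian_approx} is that the linear statistic is not scaled by $d^{-1}$, so a pointwise bound $|X_T(f)-Y_T(f)|\le d\,\omega_f(\cdot)$ need not vanish. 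Instead I would use that this difference is a sum of $d$ small, approximately independent contributions (the free random-walk increments are independent across coordinates), so its variance is $O(d\,\omega_f(\cdot)^2)$ and it concentrates around its mean at scale $\sqrt{d}\,\omega_f(\cdot)=o(1)$. Since the centring constants $\mathbf{E}_x^{(V)}[X_T(f)]$ and $\mathbf{E}_x^{(\Delta)}[Y_T(f)]$ also differ by $o(1)$ (by the same argument applied without $g$), one obtains
\begin{equation*}
\big(X_T(f)-\mathbf{E}_x^{(V)}[X_T(f)]\big) - \big(Y_T(f)-\mathbf{E}_x^{(\Delta)}[Y_T(f)]\big) \to 0
\end{equation*}
in probability on the coupling, and uniform continuity of $g$ closes the argument.

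The main obstacle is that the Vandermonde weight $\Delta(S(T))$ is a polynomial of degree $\binom{d}{2}$ in the terminal positions, so a KMT discrepancy of $O(\log T)$ can be amplified through ratios $(S_j(T)-S_i(T))/(B_j(T)-B_i(T))$ involving closely spaced particles. The remedy is to restrict to the high-probability event on which terminal spacings lie in the range predicted by the Wigner semicircle law ($T^{1/2-a/2}$ in the bulk and $T^{1/2-a/6}$ at the edge), and to bound the exceptional event using a second-moment estimate on $\Delta$ combined with the separation bounds already established. The threshold $a<1/16$ emerges from balancing the $T^{2a}$ Vandermonde factors against the KMT logarithmic scale and the minimal spacing, matching the analogous restriction in Theorem~\ref{thm:brownian_approx}.
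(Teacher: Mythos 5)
Your overall architecture matches the paper's: the paper proves this theorem by rerunning the proof of Theorem~\ref{thm:airy} (separation phase via $\nu_T$ and Proposition~\ref{lem:asymptotics.v}, coupling of each coordinate with a Brownian motion, replacement of $V$ by $\Delta$, error terms controlled by Lemmas~\ref{lem:error_bounds1}--\ref{lem:error_bounds3}), with the scaling exponent $\gamma=1/2+a/2$ producing the threshold $a<1/16$. However, at the one step where this theorem differs from Theorem~\ref{thm:brownian_approx}, you replace the paper's working argument with one that has a genuine gap. The paper handles the absence of the $d^{-1}$ normalisation by a \emph{pointwise} bound on the coupling event $\C_{2T}$: summing the per-coordinate coupling error over $d=T^a$ walks gives $\lvert X_T(f)-Y_T(f)\rvert + \lvert \E X_T(f)-\E Y_T(f)\rvert \le CT^{a+\beta-\gamma}$, and since $\gamma=1/2+a/2$ one can still choose $\beta$ with $a+\beta-\gamma<0$ alongside the other constraints; the extra $T^a$ ``does not affect the critical conditions.'' Your claim that ``a pointwise bound need not vanish'' is therefore not the obstruction here, and the concentration argument you substitute for it does not go through: the contributions of the $d$ coordinates are independent only under the \emph{free} law, whereas the expectation you must control carries the weight $V(S(2T))\ind_{\{\tau>2T\}}/V(x)$ (equivalently, is taken under $\pr_x^{(V)}$), under which the coordinates are strongly dependent. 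A free-measure variance bound of order $d\,\omega_f^2$ cannot be transferred to the conditioned measure without paying for the unbounded Radon--Nikodym factor, and even if it could, $\sqrt{d}\,\omega_f(o(1))=T^{a/2}\omega_f(o(1))$ need not vanish for a general modulus of continuity, so your route would not close the argument either.

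A secondary but real problem is your coupling accuracy. You invoke KMT at scale $O(\log T)$ on an event of probability $1-o(1)$; the paper instead uses the Sakhanenko-type coupling of Proposition~\ref{prop.coupling} at the polynomial scale $T^{\beta}$ precisely because the failure probability must be stretched-exponentially small, $e^{-T^{\beta/2}}$, to beat the Vandermonde/harmonic-function factors of order $e^{CT^{2a}\log T}$ (and $e^{T^{(4+\delta)a}}$ from Lemma~\ref{lem.lower.rough}) that appear when estimating the contribution of the complement $\C_{2T}^c$ in Lemma~\ref{lem:error_bounds1}. A polynomially small exceptional probability, even combined with a second-moment bound on $\Delta(S(T))$ as you suggest, is not sufficient there. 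To repair your write-up, revert to the paper's pointwise estimate $CT^{a+\beta-\gamma}$ on $\C_{2T}$ together with the corresponding bound on the difference of centring constants, and use the $T^{\beta}$-accuracy coupling throughout.
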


At a fixed time, the fluctuations of linear statistics of eigenvalues of random matrices have been considered in many papers starting with the work of~\cite{diaconis, johansson}.
For processes evolving in time, the fluctuations of linear statistics have been studied by Duits~\cite{duits} for non-intersecting Brownian bridges where the 
function $f : C[0, 1] \rightarrow \mathbb{R}$ has the form
$f(w) = \sum_{i=1}^N f_i(w(t_i))$ 
for $0 < t_1 < \ldots < t_N < 1$ where $x \rightarrow f_i(x)$ for all $i = 1,\ldots, N$ is continuously differentiable and grows at most polynomially for $x \rightarrow \pm\infty$. 
After applying time changes to the setting of non-intersecting Brownian motions, the results of~\cite{duits} show for this class of functions that
\[
Y_T(f) - \E_x^{(\Delta)} [Y_T(f)] \rightarrow N(0, \sigma^2_f), \quad T \rightarrow \infty
\]
where $\sigma^2_f$ in an explicit variance. The remarkable feature of central limit theorems of this type is that there is no normalisation factor in $T$. We refer to~\cite{duits} for further discussion and interpretations in terms of the Gaussian free field.

\begin{remark}
All of the Theorems in Section \ref{results} hold with Condition (ii) in Section \ref{sec:rw_assumptions} replaced by Condition (ii)' in Section \ref{sec:superharmonic}.
\end{remark}

\section{Superharmonic functions}
\label{sec:superharmonic}

Let $U, W$ be real-valued random variables. We say that $U$ is smaller than $W$ in likelihood ratio order 
denoted $U \stackrel{lr}{\leq} W$ if 
\[
\pr(U \in A) \pr(W \in B) \geq \pr(U \in B) \pr(W \in A)
\]
for all measurable subsets $A, B$ such that $\sup\{x : x \in A\} \leq \inf\{y: y \in B\}$. 
% if $x \in A$ and $y \in B$ then $x \leq y$. 
When $U$ and $W$ have densities $f_U$ and $f_W$ respectively this can be written as 
\[
f_U(u)f_W(w) \geq f_U(w)f_W(u), \qquad \text{ for all } u \leq w.
\]
If $U \stackrel{lr}{\leq} W$ then $U$ is smaller than $W$ in the usual stochastic order, for example see~\cite{shaked} for some properties of stochastic orders.

We can replace Condition (ii) in Section~\ref{sec:rw_assumptions} with the 
following condition that we call Condition (ii)'. 
Let $X$ denote a random variable with the same law as $(X_{ij})_{i \geq 1, 1 \leq j \leq d}$.
Let $\zeta$ be a random variable taking values in $[1, \infty)$ such that there exists $\delta_0 > 0$ such that $\mathbf{E}[e^{\delta \zeta}] < \infty$ for all $\delta < \delta_0.$ If there exists $M > 0$ such that $\mathbf{P}(\lvert X \rvert \geq M) = 0$ then we set $\zeta := M$. Otherwise, suppose that we can find 
$\zeta$ satisfying  such that
for all $\theta \geq 0$,
\begin{align}
\label{defn_zeta1}
(X  - \theta) \vert \{X  > \theta\} & \stackrel{lr}{\leq} \zeta \\
\label{defn_zeta2}
-(X  + \theta) \vert \{X  < -\theta\} &\stackrel{lr}{\leq} \zeta. 
\end{align}
Here, $(X- \theta) \vert \{X > \theta\}$ is the random variable whose law is that of $X-\theta$, conditioned on the event $X > \theta$.
If $X$ is almost surely bounded above (resp. below) by M then we take $\zeta \geq M$ satisfying \eqref{defn_zeta2} (resp. \eqref{defn_zeta1}). 
%Then Theorem~\ref{thm:airy} also holds with condition (ii) replaced by (ii)'.

A sufficient condition on $X$ that allows a simple construction of $\zeta$ is the following Lemma, proved in Appendix~\ref{appendix:lro}. 
Hence, Condition (ii)' implies Condition (ii).
\begin{lemma}
\label{log_concave}
Suppose that $X$ has a density $f$ that is log-concave. Define
\[
\zeta:= (X \vert X > 0) -(X \vert X < 0) + 1.
\] 
Here, $(X \vert X > 0) -(X \vert X < 0)$ is denoting the difference of two independent random variables, whose laws are $X$ conditioned on $X > 0$
and $X < 0$ respectively. Then, $\zeta$ takes values in $[1, \infty),$ has a finite exponential moment and satisfies~\eqref{defn_zeta1} and~\eqref{defn_zeta2}. 
\end{lemma}

Let $\eta_1: = 0$ and for $j = 2, \ldots, d$ let $\eta_j := \sum_{i = 1}^{j-1} \zeta_i$ where $(\zeta_i)_{i =1}^{d-1}$ are i.i.d.~with common distribution $\zeta$ and are also independent of any of the $(X_{ij})_{i \geq 1, 1 \leq j \leq d}$. 
Let $\eta = (\eta_1, \ldots, \eta_{d}) \in W^d$.

\begin{proposition}
\label{superharmonic}
Suppose the conditions in Section \ref{sec:rw_assumptions} hold, where either Condition (ii) or (ii)' can be imposed.
For $x \in W^d,$ let \[
h(x) = \E\left[\prod_{1 \leq i < j \leq d} (x_j - x_i + \eta_{j} - \eta_i)\right].
\]
Then 
%$h$ is a superharmonic function for $(S(n))_{n \geq 0}$ killed when $\tau$ occurs,
\[
\E_x[h(S(t)) 1_{\{\tau > t\}}] \leq h(x), \quad x \in W^d.
\]
Furthermore
\[
V(x) \leq h(x), \quad x \in W^d.
\]
\end{proposition}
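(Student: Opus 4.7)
The bound $V(x) \leq h(x)$ follows immediately from the supermartingale inequality. Indeed, for any $z \in W^d$ and any realisation of $\eta$, each factor $(z_k - z_i) + (\eta_k - \eta_i)$ appearing in $\Delta(z+\eta)$ is at least $z_k - z_i > 0$ because $\eta_k \geq \eta_i$; hence $\Delta(z+\eta) \geq \Delta(z)$ pointwise and $h(z) \geq \Delta(z)$. Then $\E_x[\Delta(S(t)); \tau > t] \leq \E_x[h(S(t)); \tau > t] \leq h(x)$, and letting $t \to \infty$ in \eqref{V_defn} yields $V(x) \leq h(x)$.

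For the supermartingale property itself, I would exchange expectations by Fubini,
\[
\E_x[h(S(t)); \tau > t] = \E_\eta\bigl[\E_x[\Delta(S(t)+\eta); \tau > t]\bigr],
\]
which reduces the problem to showing that for each fixed $\eta$ the process $\Delta(S(\cdot)+\eta)\,1_{\{\tau>\cdot\}}$ is a supermartingale. Since $S$ has Poissonian jumps at total rate $d$ and $\Delta$ vanishes on $\partial W^d$, this is equivalent, via Dynkin's formula applied to the killed walk, to the one-step generator inequality
\[
\sum_{j=1}^d \E_X\bigl[\Delta(y + X e_j)\,1_{\{x + X e_j \in W^d\}}\bigr] \leq d\,\Delta(y), \qquad y = x + \eta.
\]
Factoring $\Delta(y+Xe_j) = \Delta(y)\,P_j(X)$ with $P_j(X) = \prod_{i \neq j}(1 - X/(y_i - y_j))$ a polynomial of degree $d-1$ with $P_j(0) = 1$, and rewriting the indicator as $\{-b_j^- < X < b_j^+\}$ with $b_j^\pm > 0$ the $x$-spacings, the problem becomes $\sum_j \E[P_j(X)\,1_{A_j}] \leq d$.

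The crucial input is the likelihood-ratio hypothesis, which gives $X - \theta \mid \{X>\theta\} \stackrel{lr}{\leq} \zeta$ for all $\theta \geq 0$, and hence by binomial expansion the tail moment bound
\[
\E[X^k\,1_{\{X \geq \theta\}}] \leq \pr(X \geq \theta)\,\E[(\theta + \zeta)^k], \qquad k \geq 0,
\]
with a symmetric bound for $\{X \leq -\theta\}$. Splitting $1_{A_j} = 1 - 1_{\{X \geq b_j^+\}} - 1_{\{X \leq -b_j^-\}}$, the bulk contribution $\sum_j \E[P_j(X)] - d$ is the random-walk generator of $\Delta(\cdot+\eta)$ at $x$: using $\E[X]=0$, $\E[X^2]=1$ and the identity $\sum_j e_2(1/(y_i-y_j)_{i\neq j}) = 0$ (equivalent to $\sum_j \partial_j^2 \Delta = 0$), this reduces to a controlled error in moments $\E[X^k]$ with $k \geq 3$. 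The exit contributions are bounded using the tail moment estimate above, and the buffer $\eta_{j+1}-\eta_j = \zeta_j$ built into $\eta$ is chosen precisely to match the LR-overshoot bound, producing the required cancellation. The $d=2$ case is already illustrative: a one-line calculation using $\E[X\,1_{\{X\geq b\}}] \leq (b+\E\zeta)\,\pr(X\geq b)$ and its symmetric counterpart shows that $\L^W h \equiv 0$. The main obstacle for general $d$ is the combinatorial bookkeeping of the polynomial coefficients of $P_j$ summed over $j$: one needs further elementary-symmetric identities to collect cross-terms, and a careful term-by-term comparison between the bulk-error moments and the exit contributions controlled by $\E[(b_j^\pm+\zeta)^k]$ to conclude the cancellation.
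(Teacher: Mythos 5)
Your derivation of $V\le h$ from the supermartingale property is correct and matches the paper's. The gap is in the supermartingale property itself, and it occurs at the very first reduction: after Fubini you claim it suffices to show that $\Delta(S(\cdot)+\eta)\,1_{\{\tau>\cdot\}}$ is a supermartingale \emph{for each fixed} $\eta$. That statement is false in general. For fixed $\eta$ the exit contribution at a jump of walker $k$ over walker $k+1$ contains the factor $\zeta_k-Y$, where $Y$ is the overshoot of $X_k$ over the spacing $x_{k+1}-x_k$; with $\zeta_k$ frozen to a constant and $X$ unbounded, the expectation of $\Delta(y+Xe_j)$ on the exit event can be negative (already for $d=2$ one needs the constant to dominate every conditional overshoot moment weighted by the polynomial factors, which forces $Y$ to be essentially bounded). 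The whole point of Condition (ii)' is that the \emph{random} gap $\zeta_k=\eta_{k+1}-\eta_k$ is coupled to the overshoot through $Y\stackrel{lr}{\le}\zeta_k$, and the expectation over $\eta$ must be kept inside when estimating the exit term. The paper therefore works directly with $h$: it reduces to showing $\E_x[h(\dots,x_k+X_k,\dots);\,x_k+X_k>x_{k+1}]\ge 0$ (and its downward analogue), expands this as a sum of terms $\phi_{p,q}(Y,Z)=c_{p,q}(Z-Y)^pZ^q\psi(Y,Z)$ with $\psi$ symmetric and positive, and applies Lemma~\ref{stoch_ineq} to each term.

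This points to the second, related gap: you use the likelihood-ratio hypothesis only through the tail-moment bound $\E[X^k 1_{\{X\ge\theta\}}]\le \pr(X\ge\theta)\,\E[(\theta+\zeta)^k]$, which is a consequence of mere stochastic dominance. That is not enough. The functions $\phi_{p,q}$ above are not monotone (for $p$ odd, $(z-y)^p$ changes sign), and the nonnegativity $\E[\phi_{p,q}(Y,Z)]\ge 0$ is obtained from the symmetrisation identity $\phi_{p,q}(y,z)+\phi_{p,q}(z,y)=c_{p,q}(z-y)^p\psi(y,z)(z^q+(-1)^py^q)\ge 0$ for $z\ge y\ge 0$ combined with the full likelihood-ratio order — this is exactly Lemma~\ref{stoch_ineq}, and stochastic ordering alone does not yield it. Your closing paragraph concedes that the cancellation for general $d$ remains to be established via ``further elementary-symmetric identities'' and ``careful term-by-term comparison''; that unproved cancellation is precisely the content of the proposition, so the proposal as written does not constitute a proof. (A minor additional point: the bulk term $\sum_j\E[P_j(X)]-d$ vanishes exactly, since $\sum_j\partial_j^k\Delta=0$ for all $k\ge 1$; there is no residual error from moments of order $k\ge 3$ to control.)
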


The form for the superharmonic function can be compared to the harmonic function found in~\cite{DF} for ordered exponential random walks. 
%In Appendix~\ref{appendix:superharmonic} we prove a version of this result for a random walk where at each time step only one co-ordinate jumps. This simplifies some technicalities in the proof. 

The reason for introducing likelihood ratio orderings is the following characterisation; we provide 
the simple proof in Appendix~\ref{appendix:lro}.
\begin{lemma}
\label{stoch_ineq}
Suppose that $U, W$ are independent positive random variables such that $U \stackrel{lr}{\leq} W$. 
Let $\phi : \mathbb{R}^2 \rightarrow \mathbb{R}$ satisfy (i) $\phi(u, w) + \phi(w, u) \geq 0$ for all $w \geq u \geq 0$ 
and (ii) $\phi(u, w) \geq 0$ for all $w \geq u \geq 0$.
Then, 
\[
\E[\phi(U, W)] \geq 0.
\]
\end{lemma}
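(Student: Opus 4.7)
The plan is to prove the inequality by symmetrizing the double integral defining $\E[\phi(U,W)]$. Since $U, W$ are independent and take values in $[0,\infty)$, I would first write
\[
\E[\phi(U,W)] = \int_{0 \leq u, \, 0 \leq w} \phi(u,w) \, dF_U(u) \, dF_W(w),
\]
split the domain into $\{0 \leq u \leq w\}$ and $\{0 \leq w < u\}$, and in the second region swap the dummy variables $u \leftrightarrow w$ to fold everything back onto $\{0 \leq u \leq w\}$. This produces
\[
\E[\phi(U,W)] = \int_{0 \leq u \leq w} \Bigl[\phi(u,w) \, dF_U(u) \, dF_W(w) + \phi(w,u) \, dF_U(w) \, dF_W(u)\Bigr].
\]

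The key step is then to check that the integrand is pointwise non-negative on $\{0 \leq u \leq w\}$. Assuming (for clarity) that the densities $f_U, f_W$ exist, set $A := f_U(u) f_W(w)$ and $B := f_U(w) f_W(u)$; the likelihood ratio ordering $U \stackrel{lr}{\leq} W$ says exactly $A \geq B \geq 0$ whenever $u \leq w$. I would now split on the sign of $\phi(w,u)$. If $\phi(w,u) \geq 0$, both summands $\phi(u,w) A$ and $\phi(w,u) B$ are non-negative, the first by hypothesis (ii) applied at $(u,w)$ with $u \leq w$. If instead $\phi(w,u) < 0$, then since $A \geq B$ one has $\phi(w,u) B \geq \phi(w,u) A$, so
\[
\phi(u,w) A + \phi(w,u) B \;\geq\; \bigl(\phi(u,w) + \phi(w,u)\bigr) A \;\geq\; 0
\]
by hypothesis (i). Either way the integrand is non-negative, and integrating yields $\E[\phi(U,W)] \geq 0$.

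For the general case without densities, I would reduce to the density case by a monotone class argument: it suffices to verify the conclusion for $\phi$ of the form $c\, \mathbf{1}_{A \times B}$ with $\sup A \leq \inf B$ (the skew-symmetric part handled via pairs $\mathbf{1}_{A \times B} - \mathbf{1}_{B \times A}$), and for such test functions the inequality reduces directly to the measure-theoretic definition of $U \stackrel{lr}{\leq} W$ stated earlier in the paper. The main obstacle, if any, is purely bookkeeping — making sure the positivity hypotheses (i) and (ii) are combined in the correct order so that the negative region of $\phi$ on $\{w < u\}$ is dominated by the positive mass transferred from $\{u \leq w\}$ via lr-ordering; the case split on the sign of $\phi(w,u)$ is precisely what makes this clean.
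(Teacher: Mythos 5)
Your proof is correct and takes essentially the same route as the paper: symmetrize the double integral onto $\{u\le w\}$ and show the folded integrand $\phi(u,w)A+\phi(w,u)B$ is pointwise nonnegative using $A\ge B\ge 0$ (lr-ordering) together with (i) and (ii); your case split on the sign of $\phi(w,u)$ is just a cosmetic variant of the paper's one-line rearrangement. The density assumption and the closing remark about the general case are at the same level of informality as the paper's own use of the infinitesimal notation $\pr(U\in dx)\pr(W\in dy)$, and in the paper's application densities exist anyway.
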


\begin{proof}[Proof of Proposition~\ref{superharmonic}]
Let $X_k$ denote a random variable with the same law as $(X_{ij})_{i \geq 1, 1 \leq j \leq d}.$
Suppose that we can show that for $k = 1, \ldots, d-1$,
\begin{equation}
\label{ineq_up}
\E_x[h(x_1, \ldots, x_{k-1}, x_k + X_k, x_{k+1}, \ldots, x_d); x_k + X_k > x_{k+1}] \geq 0
\end{equation}
and for $k = 2, \ldots, d$, 
\begin{equation}
\label{ineq_down}
\E_x[h(x_1, \ldots, x_{k-1}, x_k + X_k, x_{k+1}, \ldots, x_d); x_k + X_k < x_{k-1}] \geq 0.
\end{equation}
Then this means that for any $t \geq 0$,
\begin{equation}
\label{ineq1}
\E_x[h(S(\tau))1_{\{\tau \leq t\}}] \geq 0, \qquad x \in W^d.
\end{equation} 
Here, recall that only one walker can jump at a time.  
Using Theorem 2.1 from~\cite{konig2002} that $\Delta$ is harmonic for $(S(t))_{t \geq 0}$ (here there is no killing)
then $h$ is also harmonic for $(S(t))_{t \geq 0}$ since 
\begin{align*}
\E_x [h(S(t))]  & = \E_x \E^{\eta}[\Delta(S(t)+\eta )] = \E^{\eta} \E_x[\Delta(S(t)+\eta )] \\
& = \E^{\eta}[\Delta(x+\eta)] = h(x).
\end{align*}
Combining this property with~\eqref{ineq1} we have
\begin{align*}
\E_x[h(S(t)) 1_{\{\tau > t\}}] 
& = \E_x[h(S(t) )]  - \E_x[h(S(t)) 1_{\{\tau \leq t\}}] \\
& = h(x)  - \E_x[h(S(\tau)) 1_{\{\tau \leq t\}}]\\
& \leq h(x).
\end{align*}
Therefore $h$ will be superharmonic if we can show~\eqref{ineq_up} and~\eqref{ineq_down}. Using $\Delta(x) \leq h(x)$ for all $x \in W^d$
and the representation from~\eqref{V_defn} for the harmonic function $V$
%\[
%V(x) = \lim_{n \rightarrow \infty} \E_x[\Delta(S(t)) 1_{\{\tau > t\}}]
%\]
then
\begin{align*}
h(x) & \geq \limsup_{t \rightarrow \infty} \E_x[h(S(t)) 1_{\{\tau > t\}}]  \\
& \geq \limsup_{t \rightarrow \infty} \E_x[\Delta(S(t)) 1_{\{\tau > t\}}] \\
& = V(x).
\end{align*}
As a result, it is sufficient to prove~\eqref{ineq_up} and~\eqref{ineq_down}. 
If $X_k$ is almost surely bounded then the equations are automatically satisfied.
For $k = 1, \ldots, d-1$ let $Y = x_k + X_k-x_{k+1} \vert (x_k + X_k > x_{k+1})$ and $Z = \zeta_k$. 
Note that $h$ is an expectation of a product indexed by $(i, j)$ for $1 \leq i < j \leq d.$ We can write this product with the indices in the order $(k, k+1)$, $(j, k)$ for $j < k$, $(j, k+1)$ for $j < k+1$, $(k, j)$ for $j > k+1$, 
$(k+1, j)$ for $j > k+1$ and finally $(i, j)$ for $i < j \notin \{k, k+1\}.$
This gives
\begin{align*}
& \E_x[h(x_1, \ldots, x_{k-1}, x_k + X_k, x_{k+1}, \ldots, x_d)\mid  x_k + X_k > x_{k+1}] \\
& = \E \bigg[ (Z - Y) \prod_{j < k}(Y + x_{k+1} - x_j + \sum_{r=j}^{k-1} \zeta_r) 
\prod_{j < k} (Z + x_{k+1} - x_j+ \sum_{r = j}^{k-1} \zeta_{r}) \\
&\prod_{j > k+1} (x_j - x_{k+1} + Z - Y + \sum_{r=k+1}^{j-1} \zeta_r) 
\prod_{j > k+1} (x_j - x_{k+1} + \sum_{r=k+1}^{j-1} \zeta_r)\\
& \prod_{i < j: i, j \notin\{k, k+1\}}(x_{j} - x_i + \sum_{r = i, r \neq k}^{j-1} \zeta_r + Z 1_{\{i < k, k+1 < j\}})  \bigg].
\end{align*}
Any empty products should be interpreted as having the value $1$.
%Here $W = \prod_{i < j \in P_k} (x_{j} - x_i + \sum_{r = i}^{j-1} \zeta_r) $
%where $P_k = \{(i, j) : i \neq k \text{ and } j \notin \{k, k+1\}\}$ are all of the remaining indices not contained above. 
%Note that $W$ depends on $\zeta_{k}$ but does not depend on $S_k$. Moreover, W is a positive and increasing function of $\zeta_{k}$.
%
%Let $Y = \zeta_{k}$ and $X = (S_{k} - x_{k+1}) \vert S_k > x_{k+1}$ and note that both are positive.
%Then 
%\begin{align*}
%& \E_x(h(S(1))1_{\{\tau = \tau_k^+ = 1\}})\\
%&\E \bigg[(Y - X) \prod_{j < k}(X + x_{k+1}  - x_j + \sum_{r=j}^{k-1} \zeta_r) 
%\prod_{j > k} (x_j -x_{k+1} - X + Y + \sum_{r=k}^{j-1} \zeta_r) \\
%&
%\qquad \prod_{j < k}(Y + x_{k+1} - x_j + \sum_{r = j}^{k-1} \zeta_r) 
%W   \bigg].
%\end{align*}
This can be written as $\E[\sum_{p, q \in \mathbb{Z}_{\geq 0}} \phi_{p, q}(Y, Z)]$ where 
\[
\phi_{p, q}(y, z) = c_{p, q} (z -y)^p z^q  \psi(y, z)
\]
where $c_{p, q} \geq 0$ is a constant and $\psi : \mathbb{R}_+^2 \rightarrow \mathbb{R}_+$ is symmetric and positive. 
The symmetric function $\psi$ arises from the symmetry between the factors in the product for $h$ indexed by $(j, k)$ for $j < k$ and $(j, k+1)$ for $j < k$.
For any $p, q \in \mathbb{Z}_{\geq 0}$ we have that $\phi_{p, q}(y, z) \geq 0$ for all $z \geq y \geq 0$ as each factor is positive. 
Moreover, 
\begin{align*}
 \phi_{p, q}(y, z) + \phi_{p, q}(z, y)  
=  c_{p, q} (z -y)^p\psi(y, z) \big(z^q + (-1)^p y^q \big) 
 \geq 0
\end{align*}
 for all $z \geq y \geq 0$. 
Therefore $\phi_{p, q}$ satisfies the conditions for Lemma~\ref{stoch_ineq}, the random variables
$Y$ and $Z$ are independent with $Z \geq_{lr} Y$ and so $\E[\phi_{p, q}(Y, Z)] \geq 0.$
This can be applied to every term in the sum to establish~\eqref{ineq_up}. 
A symmetric argument proves~\eqref{ineq_down}.
%In particular, 
%
%Then 
%\begin{align*}
%\prod_i \lvert y - x + c_i \rvert & \geq \prod_i \lvert x - y + c_i \rvert, \\
%f(x) & \leq f(y),
%\end{align*}
%so that $\lvert \phi(x, y) \rvert \geq \lvert \phi(y, x) \rvert$ and since $\phi(x, y)$ is always negative on $y \geq x$ this is sufficient to show $\phi(y, x) \geq -\phi(x, y)$ for all $y \geq x$. 
%
%This argument can be applied to every term in the sum~\eqref{decomposition} and we obtain
%\[\E_x[h(S(1)) 1_{\{\tau = 1\}})] \geq 0, \quad x \in W^d. 
%\]
%As discussed above this is sufficient to prove the statement.
\end{proof}

\section{Lower bound on the harmonic function}
\label{sec:lower_bound}
%
%Throughout we will assume that there exists 
%$t_0>0$ such that 
%\[
%\E[\mathrm{e} ^{t X_{11} } ]<\infty, \quad 
%|t|<t_0.    
%\]

For any $\veps>0$, recall
\[
	W_{t,\veps}=\left\{x\in \mathbb{R}^d:x_{j+1}-x_{j}>t^{1/2-\veps},1\le j< d\right\}.
\]
It follows from the definition of $h$ that $h(x)\ge \Delta(x)$ for $x \in W^d$. 
We will show now that on $x\in W_{t,\varepsilon}$ these functions are of the same order. 
\begin{lemma}\label{lem.delta.h}
Suppose $a \in (0, 1/4)$ and $\veps > 0$ satisfy that $1/2 -\veps > 2a$. There exists $t_0>0$ and $c> 0$ such that for $t > t_0, \gamma\in(0,1/2-2a-\varepsilon)$ and $x\in W_{t,\varepsilon}$, 
  \[
  \Delta(x)\ge \left(1-\frac{c}{t^\gamma}\right)h(x).   
  \]  
  \end{lemma}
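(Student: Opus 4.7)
The plan is to exploit the fact that, on $W_{t,\veps}$, each gap $x_j-x_i \ge (j-i)t^{1/2-\veps}$ dominates the corresponding perturbation $\eta_j-\eta_i$ in a quantitative way. Since $\eta$ is strictly increasing ($\zeta_r\ge 1$), every ratio
\[
R_{ij}:=\frac{\eta_j-\eta_i}{x_j-x_i}\ge 0,
\]
so I would factor out $\Delta(x)$ and write
\[
\frac{h(x)}{\Delta(x)} \;=\; \E\!\left[\prod_{1\le i<j\le d}\!\bigl(1+R_{ij}\bigr)\right].
\]
The whole game is to show this expectation is at most $1+O(t^{-\gamma})$.

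Next I would use $1+y\le e^y$ (valid because $R_{ij}\ge 0$) to get
\[
\frac{h(x)}{\Delta(x)} \;\le\; \E\!\left[\exp\!\Bigl(\sum_{i<j} R_{ij}\Bigr)\right].
\]
Now I would swap the order of summation. Writing $\eta_j-\eta_i=\sum_{r=i}^{j-1}\zeta_r$,
\[
\sum_{i<j} R_{ij} \;=\; \sum_{r=1}^{d-1}\zeta_r\, w_r, \qquad w_r:=\sum_{\substack{i\le r<j \\ 1\le i<j\le d}}\frac{1}{x_j-x_i}.
\]
Using $x_j-x_i\ge (j-i)t^{1/2-\veps}$, each individual weight satisfies $w_r \le (d-1)t^{-(1/2-\veps)}$, and a direct double-count gives the crucial total
\[
\sum_{r=1}^{d-1} w_r \;=\; \sum_{i<j}\frac{j-i}{x_j-x_i} \;\le\; \binom{d}{2}\,t^{-(1/2-\veps)} \;\le\; \tfrac{1}{2}\,t^{2a-1/2+\veps}.
\]
By the assumption $\gamma<1/2-2a-\veps$, this sum is $O(t^{-\gamma})$, and the individual $w_r$ tend to zero uniformly.

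The last step uses independence of the $\zeta_r$:
\[
\E\!\left[\exp\!\Bigl(\sum_r \zeta_r w_r\Bigr)\right] \;=\; \prod_{r=1}^{d-1}\E\bigl[e^{\zeta_r w_r}\bigr].
\]
By Condition (i), $\zeta$ has exponential moments in some neighbourhood $(-\delta_0,\delta_0)$, so its cumulant generating function $K(w)=\log\E[e^{\zeta w}]$ is smooth with $K(0)=0$ and $K'(0)=\E[\zeta]$. Hence there is $C<\infty$ such that $K(w)\le Cw$ for all $0\le w\le \delta_0/2$, and since $\max_r w_r \to 0$, this bound applies to every $w_r$ once $t$ is large enough. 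Therefore
\[
\frac{h(x)}{\Delta(x)} \;\le\; \exp\!\Bigl(C\sum_r w_r\Bigr) \;\le\; \exp(C' t^{-\gamma}) \;\le\; 1 + 2C' t^{-\gamma}
\]
for $t\ge t_0$, which rearranges to the desired $\Delta(x)\ge (1-c\,t^{-\gamma})\,h(x)$.

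The only delicate point is the step $K(w_r)\le Cw_r$: I have to check that $\zeta$ inherits exponential moments from $X$ (immediate from Lemma~\ref{log_concave}, since $\zeta$ is built from positive and negative tails of $X$ which both have finite exponential moments), and that the constant $C$ can be chosen uniformly in $d$ and $t$. Both are routine. The rest is bookkeeping, and the counting identity $\sum_r w_r=\sum_{i<j}(j-i)/(x_j-x_i)$ is what makes the exponent collapse to exactly the right order $d^2\,t^{-(1/2-\veps)}$.
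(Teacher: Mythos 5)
Your proof is correct and follows essentially the same route as the paper: bound $1+R_{ij}\le e^{R_{ij}}$, swap the order of summation to isolate the independent $\zeta_r$, and use the linear bound on the moment generating function of $\zeta$ near zero to collapse the exponent to $O(d^2 t^{-(1/2-\veps)})=O(t^{-\gamma})$. The only cosmetic difference is that you keep the exact weights $w_r$ and evaluate $\sum_r w_r=\sum_{i<j}(j-i)/(x_j-x_i)$ directly, whereas the paper first replaces $x_j-x_i$ by $(j-i)t^{1/2-\veps}$ and bounds each inner sum $\sum_{i\le k<j}(j-i)^{-1}$ by $d-1$; both give the same order.
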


\begin{proof}
  We have, using the inequality $1+x \le e^x$ 
  and $x_j-x_i\ge (j-i)t^{1/2-\varepsilon}$ following from the assumption $x\in W_{t,\varepsilon}$,  
  \begin{align*}
    h(x) &= \E\left[\prod_{i<j} (x_j-x_i+\eta_j-\eta_i) \right]
    \le \Delta(x) \E\left[\prod_{i<j} \left(1+\frac{\eta_j-\eta_i}{(j-i)t^{1/2-\varepsilon}}\right) \right]\\ 
    &\le \Delta(x) \E\left[ \exp\left(\sum_{i<j}\frac{\eta_j-\eta_i}{(j-i)t^{1/2-\varepsilon}}\right) \right].
  \end{align*}   
  By the definition of $\eta_j$, 
  \begin{align*}
    \sum_{i<j}\frac{\eta_j-\eta_i}{j-i}
    &\le 
    \sum_{i<j} \frac{1}{j-i}\sum_{k=i}^{j-1} \zeta_k
    =\sum_{k=1}^{d-1} \zeta_k \sum_{1\le i\le k<j\le d} \frac{1}{j-i}
    \end{align*}
  Fix $\ell\in\{1,2,\ldots,d-1\}$. The number of pairs $(i,j)$ such that 
  $1\le i\le k<j\le d$ and $j-i=\ell$ does not exceed $\ell$. This implies that 
  $$
  \sum_{1\le i\le k<j\le d} \frac{1}{j-i}\le d-1.
  $$
  Consequently,
  \begin{align*}  
  \sum_{i<j}\frac{\eta_j-\eta_i}{j-i}
    \le d \sum_{k=1}^{d-1} \zeta_k. 
  \end{align*}   
  Hence, 
  \[ 
    h(x)\le \Delta(x) 
    \E\left[ \exp\left(\frac{d}{t^{1/2-\varepsilon}}\sum_{k=1}^{d-1} \zeta_k \right) \right]
    =\Delta(x) 
    \left(\E \exp\left(\frac{d}{t^{1/2-\varepsilon}}\zeta \right)\right)^{d-1}. 
  \] 
  Since $\E [e^{u \zeta}] \le 1+Cu$ for some constant $C$ and all sufficiently small $u$   
  we further obtain 
  \begin{align*}
    h(x)\le \Delta(x) \left(1+C\frac{d}{t^{1/2-\varepsilon}}\right)^{d-1}
  \end{align*}
  implying the claim of the lemma. 
  \end{proof}  
We will also need the following simple statement. 
\begin{lemma}\label{lem:hoelder}
    Let $p\ge 1$. 
  There exists $t_0 > 0$ and a finite $C$ such that for all $t > t_0$ and $x\in W^d$ satisfying $x_{i+1}-x_i\ge 1$ for $1\le i\le d-1$,
    \begin{equation}\label{eq:hoelder}
        \left(\E_x\left[
            |\Delta(S(t))|^p
        \right]\right)^{1/p} \le C\Delta(x)t^{d^2}. 
    \end{equation}
\end{lemma}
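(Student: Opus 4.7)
The plan is to reduce the bound to moment estimates for the compound Poisson increments. Writing $S_j(t) = x_j + W_j(t)$ with $W_j(t) := S_j(t) - x_j$, the hypothesis $x_{i+1} - x_i \geq 1$ gives $x_j - x_i \geq j - i \geq 1$ for all $i < j$, so we have the pointwise estimate
\[
|S_j(t) - S_i(t)| \le (x_j - x_i) + |W_j(t) - W_i(t)| \le (x_j - x_i)\bigl(1 + |W_j(t) - W_i(t)|\bigr),
\]
where the second inequality uses $x_j - x_i \geq 1$. Taking the product over all $\binom{d}{2}$ pairs and noting that $|\Delta(S(t))| = \prod_{i<j}|S_j(t) - S_i(t)|$, this yields
\[
|\Delta(S(t))| \le \Delta(x) \prod_{1 \le i < j \le d}\bigl(1 + |W_j(t) - W_i(t)|\bigr).
\]

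Next I would raise to the $p$-th power, take expectation, and apply the generalized H\"older inequality with $m := \binom{d}{2}$ equal weights to bring the expectation inside the product:
\[
\E\bigl[|\Delta(S(t))|^p\bigr] \le \Delta(x)^p \prod_{i<j}\Bigl(\E\bigl(1 + |W_j(t) - W_i(t)|\bigr)^{pm}\Bigr)^{1/m}.
\]
The exponential moment assumption (i) of Section~\ref{sec:rw_assumptions} ensures every polynomial moment of $X_{ij}$ is finite, and standard Marcinkiewicz--Zygmund type estimates for continuous-time random walks then give, for any fixed order $q$,
\[
\E|W_j(t) - W_i(t)|^q \le C_q\,(1+t)^{q/2}.
\]
Applying this with $q = pm$ and using $(1+|W|)^{pm} \le 2^{pm}(1 + |W|^{pm})$, each single-pair factor is at most $C_{p,d}(1+t)^{p/2}$; multiplying the $m$ such factors together and taking the $p$-th root gives
\[
\bigl(\E|\Delta(S(t))|^p\bigr)^{1/p} \le C_{p,d}\,\Delta(x)\,(1+t)^{m/2}.
\]
Since $m/2 = \binom{d}{2}/2 \le d^2$, the desired bound~\eqref{eq:hoelder} follows (absorbing the difference between $(1+t)^{d^2}$ and $t^{d^2}$ into $C$ for $t \ge 1$, and noting that for $t < 1$ the estimate is trivial after enlarging $C$).

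The main obstacle is essentially bookkeeping: one has to check that the H\"older exponents combine so that all of the $t$-dependence aggregates into a single power of the form $t^{m/2}$, with no $d$-dependent quantity leaking into the base of the exponent. All such $d$-dependence is absorbed into the constant $C_{p,d}$, which is allowed since the lemma fixes $d$ and $p$. The moment estimate $\E|W_j(t) - W_i(t)|^q \le C_q(1+t)^{q/2}$ for fixed $q$ is the only analytic input, and is a standard consequence of the exponential moment hypothesis together with the Poisson jump structure of $S$.
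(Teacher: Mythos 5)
Your decomposition is the same as the paper's: the inequality $|S_j(t)-S_i(t)|\le (x_j-x_i)\bigl(1+|W_j(t)-W_i(t)|\bigr)$, valid because $x_j-x_i\ge 1$, is exactly the paper's use of $|a+b|\le(1+|b|)|a|$ for $|a|\ge 1$, and both proofs then reduce to one-dimensional moment bounds. However, there is one genuine problem with your argument as written: the claim that the $d$-dependence may be ``absorbed into the constant $C_{p,d}$, which is allowed since the lemma fixes $d$ and $p$.'' The lemma does not fix $d$. It is applied in Lemma~\ref{lem6} and in the proof of Proposition~\ref{lem:asymptotics.v} with $d=d(t)\le t^{a}$ growing, and the resulting factor $t^{d^2}$ is there tracked explicitly as $e^{d^2\log t}$ against competing terms such as $e^{-t}$ and $e^{-Ct_{k+1}^{\delta}}$; a constant that depends on $d$ in an uncontrolled way would have to be tracked in exactly the same manner, and your proof gives no bound on it. Concretely, your Marcinkiewicz--Zygmund constant enters as $C_{pm}^{1/p}$ with $q=pm=p\binom{d}{2}$, and Rosenthal-type constants grow like $q^{q/2}$, i.e.\ like $e^{cd^2\log d}$ --- not a ``finite $C$'' in the sense the lemma is used.

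The gap is repairable, but not by the reason you give. Either (a) follow the paper and replace the fixed-order moment bound by the exponential tail estimate $\pr(|S_1(t)|>u)\le e^{-ru}$ for $u>t$, which yields $\E\bigl[(1+|S_1(t)|)^{d^2p}\bigr]\le Ct^{d^2p}$ with $C$ independent of $d$ (here one accepts the weaker power $t^{q}$ in place of $t^{q/2}$ precisely in order to keep the constant uniform); or (b) keep your H\"older-over-pairs argument but actually estimate $C_{pm}$ and check that $e^{cd^2\log d}\cdot t^{m/2}\le Ct^{d^2}$, using the slack between $m/2=d(d-1)/4$ and $d^2$ together with $d\le t^{a}$. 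Option (b) works, but it requires the bookkeeping you dismissed as unnecessary, and it silently reintroduces a constraint relating $d$ and $t$ that should then be recorded in the statement.
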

\begin{proof}
Using $|a+b|\le (1+|b|)|a|, |a|\ge 1$, 
\begin{align*} 
(\E_x[|\Delta(S(t))|^p])^{1/p} &\le 
\Delta(x)
\left(\E_0\left[\prod_{i<j}\left(1+|S_j(t)-S_i(t)|\right)^p\right]\right)^{1/p}.  
%\le 
%C\Delta(x) t_k^{d^2}
\end{align*}	
Next, 
\begin{align*} 
  \E_0\left[\prod_{i<j}\left(1+|S_j(t)-S_i(t)|\right)^p\right]
  &\le 
  \E_0\left[\prod_{i<j}\left(1+|S_j(t)|+|S_i(t)|\right)^p\right]\\ 
  \le \E_0\left[\prod_{i<j}\left((1+|S_j(t)|)(1+|S_i(t)|)\right)^p\right]
  &\le \E_0(1+|S_1(t)|)^{d^2p}\le C t^{d^2p}, 
\end{align*}  
where we make use of the following exponential inequality: there exists an $r > 0$
such that $\pr_0(|S_1(t)|>u)\le e^{-r u}$ for $u>t$. The exact value of the exponent $r$ does not need to be chosen optimally and plays no role.
\end{proof}
\begin{lemma}\label{lem6}
	Let $a \in (0, 1/4)$ and $\veps > 0$ with $d(t)\le t^a$ for some $2a< \frac{1}{2}-\varepsilon$.	
  There exist $\gamma>0, C>0$ and $t_0>0$ such that 
  for $t>t_0$ and $x\in W_{t,\varepsilon}$, 
  \begin{align*} 
    \E_x[h(S(t));\tau>t]
    \ge 
    \E_x[\Delta(S(t));\tau>t]
    & \ge 
    \left(1-\frac{C}{t^\gamma}\right)\Delta(x)\\
    &\ge 
    \left(1-\frac{2C}{t^\gamma}\right)h(x).   
  \end{align*}
\end{lemma}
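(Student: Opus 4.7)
The lemma is a chain of three inequalities, the outer two being short. For $\E_x[h(S(t));\tau>t]\ge\E_x[\Delta(S(t));\tau>t]$, the integrand is bounded pointwise on $\{\tau>t\}$: each factor $S_j(t)-S_i(t)+\eta_j-\eta_i\ge S_j(t)-S_i(t)>0$ since $\eta_j\ge\eta_i$ for $i<j$, so $h\ge\Delta$ on $W^d$. For the third inequality, I substitute Lemma \ref{lem.delta.h} to get $(1-C/t^\gamma)\Delta(x)\ge(1-C/t^\gamma)(1-c/t^\gamma)h(x)\ge(1-2C/t^\gamma)h(x)$ after redefining $C$.

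The middle inequality $\E_x[\Delta(S(t));\tau>t]\ge(1-C/t^\gamma)\Delta(x)$ is the substance. The plan is to use the martingale property of $\Delta$ under the unkilled walk (Theorem 2.1 of \cite{konig2002}) together with the strong Markov property at $\tau$ to rewrite
\[
\E_x[\Delta(S(t));\tau>t]=\Delta(x)-\E_x[\Delta(S(\tau));\tau\le t],
\]
reducing the task to showing $\E_x[\Delta(S(\tau));\tau\le t]\le C\Delta(x)/t^\gamma$. The key observation is a sign analysis of $\Delta(S(\tau))$: at the exit time one coordinate (say $k$) jumps and crosses $m\ge1$ of its neighbours, so exactly $m$ factors of $\Delta(S(\tau))$ become negative and $\mathrm{sign}\,\Delta(S(\tau))=(-1)^m$. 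The dominant case $m=1$ contributes $\le0$ and can be discarded; it remains to control the $m\ge2$ contribution, which I bound by Cauchy-Schwarz:
\[
\E_x\bigl[|\Delta(S(\tau))|;\,\tau\le t,\,m\ge2\bigr]\le\bigl(\E_x[|\Delta(S(\tau\wedge t))|^2]\bigr)^{1/2}\pr_x(\tau\le t,\,m\ge2)^{1/2}.
\]
For the $L^2$ factor, $\Delta(S(\cdot))$ being a martingale makes $|\Delta(S(\cdot))|^2$ a submartingale; Doob's maximal inequality together with Lemma \ref{lem:hoelder} at $p=2$ gives the bound $\le C\Delta(x)t^{d^2}$.

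For the probability factor I would exploit Condition (ii)'. Given that a walker just crossed one neighbour at $\tau$, the overshoot is dominated in likelihood ratio order by a fresh copy of $\zeta$, whose exponential moments yield $\pr(\zeta>g)\le e^{-\delta g}$; the event $\{m\ge2\}$ requires this overshoot to exceed the next gap at $\tau^-$. Combined with the initial separation $x\in W_{t,\veps}$ and a short argument showing the $2$-gap survives on a high-probability event, I expect $\pr_x(\tau\le t,\,m\ge2)\le\exp(-ct^{1/2-\veps})$. Multiplying, the $m\ge2$ contribution is at most $C\Delta(x)t^{d^2}\exp(-ct^{1/2-\veps}/2)$; since $d^2\le t^{2a}$ with $2a<1/2-\veps$, this decays faster than any polynomial, giving $o(\Delta(x)/t^\gamma)$. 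The main obstacle is this probability bound: the walks fluctuate by $\sqrt{t}\gg t^{1/2-\veps}$, so the gaps at the random time $\tau^-$ need not inherit their initial lower bound. The likelihood ratio dominance $(X-\theta)\mid\{X>\theta\}\le_{lr}\zeta$ is precisely what resolves this, since it provides a uniform, exponentially-tailed stochastic bound on the conditional overshoot, decoupled from the complex joint configuration at the exit time.
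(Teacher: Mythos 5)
Your outer two inequalities and the reduction of the middle one to bounding $\E_x[\Delta(S(\tau));\tau\le t]$ from above are fine (modulo the routine integrability check needed to apply optional stopping to $\Delta(S(\cdot\wedge\tau))$, which moment bounds of the type in Lemma~\ref{lem:hoelder} supply). The sign observation $\mathrm{sign}\,\Delta(S(\tau))=(-1)^m$ is also correct, so discarding the $m=1$ part is legitimate for an upper bound. The gap is the claimed estimate $\pr_x(\tau\le t,\,m\ge2)\le\exp(-ct^{1/2-\veps})$, which is not justified by your argument and is, I believe, false. The likelihood-ratio domination of the overshoot by $\zeta$ gives an $O(1)$ (exponentially tailed) bound on how far the jumper lands beyond the first crossed neighbour, but says nothing about the size of the \emph{next} gap $S_{k+2}(\tau^-)-S_{k+1}(\tau^-)$ that the overshoot must exceed. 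That gap diffuses on scale $\sqrt{t}\gg t^{1/2-\veps}$ and is $O(1)$ at the exit time with probability that is at worst polynomially small (already for $d=3$, the two-dimensional gap process started at distance $t^{1/2-\veps}$ from the origin visits a unit box before time $t$ with probability decaying no faster than a power of $t$, after which a single $O(1)$ jump crosses both neighbours). A polynomially small probability cannot beat the Cauchy--Schwarz factor $t^{d^2}=e^{t^{2a}\log t}$, which is superpolynomially large when $d\sim t^a$; you would need genuinely stretched-exponential decay, and you do not have it. So the $m\ge2$ term is not under control and the proof does not close.

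For comparison, the paper avoids the exit time $\tau$ and the sign analysis altogether. It truncates the increments to size $t^{\delta}$ and stops at the first time $T$ at which two neighbouring (truncated) walks come within distance $t^{\delta}$ of each other; before $T$ no crossing can occur, so $\E_x[\Delta(S'(t));\tau'>t]\ge\Delta(x)-\E_x[\Delta(S'(T));T\le t]$ by optional stopping, with $\Delta(S'(T))>0$. At time $T=T_i$ one factor of $\Delta(S'(T))$ is at most $t^{\delta}$, whence $\Delta(S'(T))\le t^{\delta}\,u(S'(T))$ with $u(x)=\Delta(x^+)-\Delta(x)$, and since $u$ is harmonic for the free walk, $\E_x[u(S'(T));T\le t]\le u(x)\le\Delta(x)\bigl((1+t^{-(1/2-\veps)})^{d(d-1)/2}-1\bigr)$. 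This martingale comparison is what replaces the rare-event probability estimate your argument requires; if you want to salvage your route, you would need an analogous quantitative mechanism (not just overshoot control) for the event that two consecutive gaps are simultaneously small at the exit time.
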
  

\begin{proof}
The first inequality follows from $h(x)\ge \Delta(x)$ for $x\in W^d$. 
The third inequality follows from Lemma~\ref{lem.delta.h}. 
The rest of the proof is devoted to the second inequality.

Fix $\delta>0,$ which we will define later. 
Let $X'_{ij}$ be independent random variables 
distributed as $X_{ij}$ conditioned on $\{|X_{ij}|\le t^\delta\}$, let  $S'_j(t)$ 
be the corresponding random walks started from $x_j$ and $\tau'$ their exit time from the Weyl chamber.  
Let $n_t(j) = \sup\{n \geq 0: \sum_{i=1}^n e_{ij} \leq t\}$ and 
$n_t = \sum_{j=1}^d n_t(j)$ be the total number of jumps up to time $t$.
Let $(\widehat{X}_k)_{k \geq 1}$ be a relabelling of the $X_{ij}$ according to the time at which the corresponding jump occurs, so that $(\widehat{X}_k)_{1 \leq k \leq n_t}$ is the collection of jumps that happen before time $t$.

%For $j = 1, \ldots, d$ set $U_j(n) = \sum_{i=1}^n X_{ij}$ for $n \geq 1$
%and let $N_j(t)$ be independent standard Poisson processes. Then we have the %representation 
%\[
%S_j(t) = U_j(N_j(t)), \quad t \geq 0.
%\]
%Let $n_t = \sum_{j=1}^d N_j(t)$ denote the total number of jumps up to time $t$. 

%and $S'_{j}(t)$ the interpolated continuous 
%time process. 

%
%Truncate the increments of the random walk at the level $n^\delta$. 
%Now let $X'_{ij}$ be independent random variables 
%distributed as $X_{ij}\mid |X_{ij}|\le n^\delta$, let  $S'_j(n)$ 
%corresponding random walks. 
We have
\begin{align*} 
 & \E_x[\Delta(S(t));\tau>t] \\
  &\ge
  \E_x[\Delta(S(t));\tau>t, \max_{1 \leq k \leq 2d(t) t}|\widehat{X}_{k}|\le t^\delta,  n_t \leq 2d(t)t ]\\  
  &=
 \E_x[\Delta(S'(t));\tau'>t,  n_t \leq 2d(t)t ]  \pr\left(\max_{1 \leq k \leq 2d(t) t}|\widehat{X}_{k}|\le t^\delta \right)
 \\
 &\ge 
 \E_x[\Delta(S'(t));\tau'>t]
 \pr\left(\max_{1 \leq k \leq 2d(t)t}|\widehat{X}_{k}|\le t^\delta \right)\\
 &\hspace{1cm}-\E_x[\Delta(S'(t));\tau'>t, n_t > 2d(t)t ]. 
\end{align*}
We will first estimate the last summand using the H\"{o}lder inequality. Then we use Lemma~\ref{lem:hoelder} with $p=2$
along with the fact that $n_t \sim \text{Po}(d(t)t)$ and applying a Chernoff bound to obtain
\begin{align*}
\E_x[\Delta(S'(t));\tau'>t, n_t > 2d(t)t ]&\le 
\E_x[\Delta(S'(t))^{2}]^{1/2}
\pr( n_t > 2d(t)t )^{1/2}\\
&\le C \Delta(x)t^{d^2}e^{-d(t)t/2}.
\end{align*}

Let $x\in W_{t,\varepsilon}$ and $x^+ = (x_1 , x_2 + 1, \ldots, x_d + d-1)$. 
%Let $\tau$ be the exit time from the Weyl chamber. 
Define \[u(x)=\Delta(x^+)-\Delta(x).\] 
Note that 
\begin{equation}\label{eq:sumxi}
\sum_{i=1}^{d-1}\frac{1}{x_{i+1}-x_i} \le 
\frac{u(x)}{\Delta(x)}.     
\end{equation}
Let 
\[
T_i:=\min\{s>0\colon \text{dist}(S'_{i+1}(s),S'_i(s))\le t^\delta\}    
\]
and $T=\min(T_1,\ldots,T_{d-1})$.
Applying the optional stopping theorem to the stopping time $T \wedge t$, we have
\begin{align*}
\Delta(x) & = \E[\Delta(S'(T \wedge t))]\\
& = \E[\Delta(S'(t)); T > t] + \E[\Delta(S'(T)); T \leq t].
\end{align*}
Combining this with the inequality $\tau' \geq T$, we obtain
\begin{align*}
    \E[\Delta(S'(t)); \tau' > t] \geq \E[\Delta(S'(t)); T > t] 
    = \Delta(x) - \E[\Delta(S'(T)); T \leq t].
\end{align*}
%Observe that since $\Delta(S'(t))$ is a martingale from Corollary 2.2 of %\cite{konig2002}, by the tower property and optional stopping theorem, 
%\begin{align*} 
%  \E_x[\Delta(S'(t));\tau'>t] &\ge 
%  \E_x[\Delta(S'(t));T>t]  \\
%  & = \E_x[\Delta(S'(t))] -  \E_x[\Delta(S'(t));T\le t]\\
%  &=\Delta(x)-  \E_x[\E[\Delta(S'(t)) I(T\le t) \vert \mathcal{F}_T]]\\
%  & =\Delta(x)-  \E_x[I(T \le t)\E[\Delta(S'(t)) \vert \mathcal{F}_T]]\\
%  & = \Delta(x)-  \E_x[\Delta(S'(T)); T \leq t] 
%\end{align*}
%where $(\mathcal{F}_s)_{s \geq 0}$ is given by $\mathcal{F}_s = \sigma(S(r) : r %\leq s)$.
Then, using~\eqref{eq:sumxi}, 
\begin{align*}
    \E_x[\Delta(S'(T));T\le t]
    &=\sum_{i=1}^{d-1}\E_x\left[
        \Delta(S'(T));T=T_i\le t
    \right]
    \\
    &\le 
    t^\delta 
    \E_x\left[
        \Delta(S'(T))\sum_{i=1}^{d-1}
        \frac{I(T=T_i)}{S'_{i+1}(T)-S'_{i}(T)};T\le t
    \right] \\
   &  \le 
    t^\delta 
    \E_x[u(S'(T));T\le t]. 
\end{align*}
Note that $\Delta(x^+)$ is harmonic for the free random walk using the same argument applied to $h$ in Proposition \ref{superharmonic} (taking $\zeta_j = 1$
for $j = 1, \ldots, d$). Therefore, $u(x)$ is harmonic for the free random walk as a difference of harmonic functions and
$u(S(t\wedge T))$ is a martingale. Hence, 
\[
    \E_x[u(S'(T));T\le t]
    \le 
    \E_x[u(S'(t\wedge T))]=u(x).
\]
Next observe that, due to the assumption $x\in W_{t,\varepsilon}$,  
\[
\frac{u(x)}{\Delta(x)}
\le \left(
    1+\frac{1}{t^{1/2-\varepsilon}}
\right)^{\frac{d(d-1)}{2}}-1.    
\]
For $2a<\frac{1}{2}-\varepsilon$
%This gives 
%\[
%    \frac{1}{4}-\frac{\varepsilon}{2} = \frac{\varepsilon}{2}
%\]
%and $a=\frac{1}{8}$.
we have
\[
    \E_x[\Delta'(S(T));T\le t]
    \le  
    t^\delta 
    \Delta(x)  
    \left(
    \left(
        1+\frac{1}{t^{1/2-\varepsilon}}
    \right)^{\frac{d(d-1)}{2}}-1\right) 
    \le \frac{\Delta(x)}{t^{\gamma_1}} t^\delta
\]
for some $\gamma_1 > 0$.
As a result, 
\[ 
    \E_x[\Delta(S'(t));\tau'>t]\ge 
    \Delta(x)(1-t^{\delta-\gamma_1}).    
\]
We are left to note that 
%\textcolor{red}{[Can this now be deleted? $ \prod_{j = 1}^d \pr(n_t(j) \leq 2t) \geq 1 - \frac{1}{t}$) and]}
\[
	\pr\left(\max_{1 \leq k \leq 2td(t)}|\widehat{X}_{k}|\le t^\delta \right) 
	=(1-\pr(|\widehat{X}_{k}|>t^\delta))^{2td(t)} \ge 1-\frac{1}{t}
\]
provided $|X_{ij}|$ and their relabelling as $\widehat{X}_k$ have sufficiently many moments, which is a consequence of our assumptions in Section \ref{sec:rw_assumptions}. Finally, taking $\delta = \gamma_1/2$ and $\gamma = \gamma_1/2$ gives the required statement.
%The lower bound then follows.  
\end{proof}

Define
\[
\nu_t :=\inf\{r\ge 0\colon S(r)\in W_{t,\delta}\}.
\]
\begin{lemma}\label{lem:repulsion}
  Fix   $\delta\in(0,1/2)$. 
There exist   $C>0$,  
$t_0>1$
 such that uniformly in $x\in W^d, d=d(t)$ and $t>t_0$, 
\[ 
	\pr_x(\nu_t>t^{1-\delta},\tau>t^{1-\delta})\le \exp\{-Ct^\delta\}. 
\]
\end{lemma}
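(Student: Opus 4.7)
I will decompose the time interval $[0,t^{1-\delta}]$ into $K\asymp t^{\delta}$ blocks of length $s_0\asymp t^{1-2\delta}=M^{2}$, where $M:=t^{1/2-\delta}$, and apply the strong Markov property at the block boundaries $T_k=k s_0$. If I can establish the one-block estimate
\[
\sup_{y\in W}\pr_y(\tau>s_0,\,\nu_t>s_0)\le 1-p
\]
for some $p\in(0,1)$ that is uniform in $y$ and in $d=d(t)$, then the strong Markov property at $T_k$ gives
\[
\pr_x(\tau>T_{k+1},\,\nu_t>T_{k+1})\le (1-p)\,\pr_x(\tau>T_k,\,\nu_t>T_k),
\]
and iterating $K$ times yields $\pr_x(\tau>t^{1-\delta},\,\nu_t>t^{1-\delta})\le (1-p)^K\le \exp(-C\,t^{\delta})$, as required.

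\textbf{One-block estimate.}
The one-block estimate amounts to: from any $y\in W$, within the diffusive time $s_0\asymp M^{2}$ the walk has a dimension-free positive probability either to leave $W$ or to reach $W_{t,\delta}$. I would split according to the initial geometry.

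\emph{Case (a): $y$ has at least one small gap.} If some adjacent gap satisfies $y_{j+1}-y_j\le\epsilon M$ for a small fixed $\epsilon$, then the one-dimensional difference process $Y_j(r)=S_{j+1}(r)-S_j(r)$, which is a random walk of variance rate $2$ started at most $\epsilon M$, has a constant positive probability to reach $0$ within time $s_0\asymp M^{2}$ by a standard one-dimensional fluctuation estimate. This gives $\pr_y(\tau\le s_0)\ge p_1>0$.

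\emph{Case (b): all gaps of $y$ are at least $\epsilon M$.} I would couple $S$ with a system of $d$ independent Brownian motions $B$ via KMT on the interval $[0,s_0]$, the coupling error being $O(\log t)\ll M$. A direct Gaussian computation in the gap coordinates shows that $B(s_0)$ lies in $W_{t,\delta}$ with probability bounded below, and the KMT error preserves this for $S$, yielding $\pr_y(\nu_t\le s_0)\ge p_2>0$.

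\textbf{Main obstacle.}
The delicate point is uniformity in $d$ in Case~(b): requiring that all $d-1$ gaps of $B(s_0)$ simultaneously exceed $M$ tends to a probability that shrinks with $d$ because of level repulsion (the small-spacing distribution of Dyson-type systems). To recover a dimension-free block success probability, I would replace the single block target ``all gaps exceed $M$'' by a sequence of relative targets in which the minimum rescaled gap increases by a definite factor at each stage, iterating the block estimate across $O(\log d)$ sub-blocks. The logarithmic overhead is absorbed into $C\,t^{\delta}$ provided $d$ grows at most polynomially in $t$, which is the regime in which Lemma~\ref{lem:repulsion} is invoked by Lemma~\ref{lem6} and Theorem~\ref{thm:airy}. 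An alternative, which avoids the multi-scale iteration, would be to use the explicit Karlin-McGregor representation of the transition density together with Hermite-type asymptotics to bound the block survival probability directly; this is cleaner but requires sharper input.
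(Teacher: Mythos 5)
Your block decomposition and iteration scheme are exactly the paper's (the paper uses blocks of length $b_t^2$ with $b_t=[At^{1/2-\delta}]$ and iterates the one-block bound $\asymp t^{\delta}$ times via the Markov property, using that on the surviving event the configuration at each block boundary is still in $W\cap W_{t,\delta}^c$). The problem is your one-block estimate, specifically Case (b), and you have correctly diagnosed it yourself: proving that the walk \emph{reaches} $W_{t,\delta}$ within one diffusive block with a dimension-free probability is not true in general (level repulsion makes the probability that all $d-1$ gaps simultaneously exceed $t^{1/2-\delta}$ decay in $d$), and neither of your proposed rescues (a multi-scale iteration over $O(\log d)$ sub-blocks, or Karlin--McGregor asymptotics) is carried out. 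As it stands the one-block estimate is not established, so the proof is incomplete.

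The gap is avoidable, and this is where your route diverges from the paper's. The event being bounded is $\{\nu_t>\cdot\}\cap\{\tau>\cdot\}$, so for the one-block estimate it suffices to show that from any $y\in W\cap W_{t,\delta}^c$ the walk \emph{exits} $W$ within one block with probability bounded below uniformly in $d$; you never need the hard direction of reaching $W_{t,\delta}$. Any such $y$ has some gap $y_{j+1}-y_j\le t^{1/2-\delta}=M$ (not merely $\le\epsilon M$, which is why your Case (a) fails to cover all of $W\setminus W_{t,\delta}$ and forces the problematic Case (b) into existence). The paper then bounds
\[
\pr_y(\tau>b_t^2)\le \pr\Bigl(\min_{s\le b_t^2}\bigl(S_{j+1}(s)-S_j(s)\bigr)>0\Bigr)\le C\,\frac{H(y_{j+1}-y_j)}{b_t}\le C\,\frac{H(M)}{AM}\le \frac{C'}{A},
\]
where $H$ is the renewal function of the descending ladder heights of the difference walk, using subadditivity $H(y)\le C(1+y)$; choosing the absolute constant $A$ large makes this $\le e^{-1}$, uniformly in $j$, $d$ and $y$. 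So the correct fix is to delete Case (b) entirely, take the small-gap threshold to be $M$ itself, and take the block length $A^2M^2$ with $A$ a large absolute constant (or equivalently invoke the invariance principle to get a uniform positive collision probability from gap $\le M$ over time $\asymp M^2$). With that single change your argument closes.
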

\begin{proof} 
Fix $A$, which we will define later and set $b_t=At^{1/2-\delta}$.
	If $x\in W_{t,\delta}$, then there is nothing to prove since $\nu_t=0$. 
	Hence, we will assume that \(x\in W^d\cap W_{t,\delta}^c\).
	Next note that $W^d\cap W_{t,\delta}^c=\cup_{j=1}^{d-1}W_{j,t,\delta}$, where
\[
	W_{j,t,\delta}=\left\{x\in W^d\colon \min_{1\le k< j}(x_{k+1}-x_k)>t^{1/2-\delta}, x_{j+1}-x_j\le  t^{1/2-\delta}\right\}.
\] 
Note the following estimate for $x\in W_{j,t,\delta}$,
which can be proved by the same arguments as Lemma 3 from~\cite{denisov_wachtel16}, 
\begin{align*}
	\pr_x(\nu_t>b_t^2,\tau>b_t^2)&\le 
	\pr_x(\tau>b_t^2)\\ 
  &\le 
	\pr\left(\min_{s\le b_t^2}S_{j+1}(s)-S_j(s)>0\right) \\
				 &\le C\frac{H(x_{j+1}-x_j)}{b_t}\le 
			 C\frac{H(t^{1/2-\delta})}{b_t},
\end{align*}
where $H(x)$ is the renewal function of descending ladder height process 
of the compound Poisson process $S_{j+1}(s)-S_j(s)$.  
Note that $H$ is the same for all $j$. Hence, by subadditivity of the renewal function we obtain the following uniform bound for $x\in W^d\cap W_{t,\delta}$,  
\begin{equation}
	\label{eq:conc} 
	\pr_x(\nu_t>b_t^2,\tau>b_t^2)\le C\frac{1+t^{1/2-\delta}}{b_t}\le e^{-1}, 
\end{equation}
for sufficiently large $A$. 
We can now proceed as follows, 
\begin{align*}
&	\pr_x(\tau>t^{1-\delta},\nu_t>t^{1-\delta}) \\
	&=
\e_x\left[\pr_{S_{t^{1-\delta}-b_t^2}}(\tau>b_t^2,\nu_t>b_t^2),\tau>t^{1-\delta}-b_t^2,\nu_t>t^{1-\delta}-b_t^2 \right]\\
			     &\le e^{-1} \pr_x\left(\tau>t^{1-\delta}-b_t^2,\nu_t>t^{1-\delta}-b_t^2\right), 
\end{align*}
since $S_{t^{1-\delta}-b_t^2}\in W^d\cap W_{t,\delta}^c$ on the event $\{\tau>t^{1-\delta}-b_t^2,\nu_t>t^{1-\delta}-b_t^2\}$. Repeating the procedure $\frac{t}{b_t^2}-1$ times we obtain, 
\[
	\pr_x(\tau>t^{1-\delta},\nu_t>t^{1-\delta})\le \exp\left\{-\frac{t}{b_t^2}+1\right\},
\]
which implies the statement of the lemma. 
\end{proof}

\begin{proof}[Proof of Proposition~\ref{lem:asymptotics.v}]
The upper bound  $V(x)\le (1+o(1))\Delta(x)$ is a consequence of Proposition \ref{superharmonic} and Lemma \ref{lem.delta.h}. It remains to prove the lower bound for $V$.

Let $x\in W_{{T},\varepsilon}$. 
Fix some $\delta \in (\varepsilon,1/2-2a)$ and set 
$t= T^{\frac{1/2-\varepsilon}{1/2-\delta}}$. 
Then $x\in W_{t,\delta}$ and, in particular, Lemma~\ref{lem6} 
is applicable with $\varepsilon=\delta$. 

Consider the  sequence $(t_k)_{k\ge 0}$ defined 
recursively as follows: $t_0=t$ and 
	$t_{k+1}^{1-\delta}=t_k$ for $k\ge 0$.
	We now write applying Markov property and Lemma~\ref{lem6} for $k \geq 0,$  
\begin{align*}
& 	\E_x[\Delta(S(t_{k+1}));\tau>t_{k+1}]
\ge 
\E_x[\Delta(S(t_{k+1}));\tau>t_{k+1},\nu_{t_{k+1}}\le t_k]\\ 
&\ge \left(1-\frac{c}{t_{k+1}^\gamma}\right)
\E_x[\Delta(S(\nu_{t_{k+1}}));\tau>\nu_{t_{k+1}},\nu_{t_{k+1}}\le t_k]. 
\end{align*}
Here, $c$ denotes a positive constant whose value can change from line to line.
Note that by Lemma~\ref{lem.delta.h}, 
\[
\Delta(S(\nu_{t_{k+1}}))\ge 
\left(1-\frac{c}{t_{k+1}^\gamma}\right)
h(S(\nu_{t_{k+1}})) 
\]
for the superharmonic function $h$. 
Also since $h$ is superharmonic and dominates $\Delta$ we have
\begin{align*}
\E_x[h(S(\nu_{t_{k+1}}));\tau>\nu_{t_{k+1}},\nu_{t_{k+1}}\le t_k] 
&\ge 
\E_x[h(S({t_{k}}));\tau>t_{k},\nu_{t_{k+1}}\le t_k] \\
&\ge 
\E_x[\Delta(S({t_{k}}));\tau>t_{k},\nu_{t_{k+1}}\le t_k]. 
\end{align*}	
Hence, 
\begin{align*}
	\E_x[\Delta(S(t_{k+1}));\tau>t_{k+1}]
	&\ge 
\left(1-\frac{c}{t_{k+1}^\gamma}\right)^2 
\E_x[\Delta(S({t_{k}}));\tau>t_{k}]
\\&-
\E_x[\Delta(S({t_{k}}));\tau>t_{k},\nu_{t_{k+1}}> t_k]. 
\end{align*}
We can now apply H\"{o}lder's inequality to the second term to obtain 
\[
\E_x[\Delta(S({t_{k}}));\tau>t_{k},\nu_{t_{k+1}}> t_k]
\le 
(\E_x[|\Delta(S({t_{k}}))|^p])^{1/p} 
\pr_x(\tau>t_{k},\nu_{t_{k+1}}> t_k)^{1/q}
\]
for $1/p+1/q=1$.
Then, by Lemma~\ref{lem:hoelder}, taking $p = q = 2$,
\[
(\E_x[|\Delta(S({t_{k}}))|^p])^{1/p} 
\le C \Delta(x) t_k^{d^2}.
\]
 Applying Lemma~\ref{lem:repulsion} where $\veps$ in Lemma~\ref{lem:repulsion} is chosen to be $\delta$ here,
\[
\pr_x(\tau>t_{k},\nu_{t_{k+1}}> t_k)^{1/q}\le 
e^{-Ct_{k+1}^\delta}. 
\]	
Hence, provided $\delta \in (\veps, 1/2)$ is chosen such that $2a<\delta/(1-\delta)$ we obtain, 
\begin{align*}
	\E_x[\Delta(S(t_{k+1}));\tau>t_{k+1}]
	&\ge 
\left(1-\frac{c}{t_{k+1}^\gamma}\right)^2 
\E_x[\Delta(S({t_{k}}));\tau>t_{k}]
\\&-C\Delta(x) e^{-Ct_{k+1}^{\delta} + T^{2a}\log(t_k)}.
\end{align*}
The condition that $2a < \delta/(1-\delta)$ ensures that the first time in this iteration, $k = 0$, decays to zero.
Now recall that $V(x)=\lim_{t\to \infty}\E_x[\Delta(S(t);\tau>t]$.
Hence, repeating the above iterations 
\begin{multline*} 
	V(x)\ge \prod_{k=0}^\infty 
\left(1-\frac{c}{t_{k+1}^\gamma}\right)^2 \E_x(\Delta(S(T)); \tau > T) 
\\- 
C\Delta(x)\sum_{k=0}^\infty e^{-Ct_{k+1}^{\delta}+T^{2a}\log(t_k)}.
\end{multline*}	
%The sum can be chosen to decay by choosing $\delta > 2\alpha$. 
Finally, Lemma~\ref{lem6} shows that for $x \in W_{T, \varepsilon}$,
\[
\E_x(\Delta(S(T)); \tau > T) \geq (1-T^{-\gamma}) \Delta(x)
\]
for $2a < 1/2 - \varepsilon$
and the lower bound follows. 
\end{proof}
%We can now compute the value of $\alpha$. 
%First it should satisfy $\alpha<\varepsilon/2$ and second it should satisfy 
%$\alpha<\frac{1}{6}-\varepsilon/3$. Hence, 
%\[
%	\frac{5\alpha}{3}<\frac{1}{6},
%\]	
%that is $\alpha<1/10$ should work.
%I think that the above proof can be improved 
%to have $\alpha<\varepsilon$ and  
%$\alpha<\frac{1}{6}-\varepsilon/3$. Hence, 
%\[
%	\frac{4\alpha}{3}<\frac{1}{6},
%\]	
%that is $\alpha<1/8$.
%To obtain a better bound Lemma~\ref{lem6} requires improvements.

\section{Convergence to the Airy line ensemble}
\label{sec:convergence}

\subsection{A brief description of the strategy}
We intend to apply coupling of random walks and Brownian motions along with Proposition \ref{lem:asymptotics.v}. In order to apply this we first wait for the spacing between components of the random walk to exceed the distances at which we can accurately approximate $V$ by $\Delta$.
% such that the errors from the coupling do not affect whether the particles remain ordered. 
We then estimate the maximal and minimal heights attained by the top
and bottom random walks up to this hitting time. Finally, we prove that the difference between functionals of non-intersecting Brownian motion with different initial conditions (where the top and bottom particle satisfy some bounds) can be bounded using a monotonicity property of non-intersecting Brownian motion. Together these steps allow us to reduce the calculation for ordered random walks to non-intersecting Brownian motion started from zero. As a result we can use the known convergence to the Airy process based on exact formulas. In Section~\ref{sec:lemmas} we prove a variety of bounds needed to carry out the strategy above. In Section~\ref{sec:proof_thm}
we complete the proof of Theorem~\ref{thm:airy}.
The main difficulty in all of these steps is to have estimates with uniform control in $d, t$. 

\subsection{Preliminary lemmas}
\label{sec:lemmas}

We start with the following crude estimate. 
\begin{lemma}\label{lem.lower.rough}
For any $x\in W^d$ and $\delta > 0$,
\[
V(x)\ge  C_1\exp(-C_2 d^{4+\delta})
\]
for some absolute constants $C_1$ and $C_2.$
\end{lemma}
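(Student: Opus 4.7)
The plan is to combine the harmonic property $V(x)=\mathbf{E}_x[V(S(T));\tau>T]$ with the sharp lower bound $V\ge(1-o(1))\Delta$ on the well-separated set $W_{T,\varepsilon}$ provided by Proposition~\ref{lem:asymptotics.v}.

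First I would fix $\varepsilon>0$ small enough that $2a<1/2-\varepsilon$ (possible since $a<1/4$) and set $T=d^{1/a}$, so that $d=T^a$. Any $y\in W_{T,\varepsilon}$ satisfies $y_j-y_i\ge(j-i)T^{1/2-\varepsilon}$, hence $\Delta(y)\ge T^{(1/2-\varepsilon)d(d-1)/2}$, bounded below by a positive constant for $d$ large; Proposition~\ref{lem:asymptotics.v} then yields $V(y)\ge c$ on $W_{T,\varepsilon}$. Applying the harmonic property gives
\[
V(x)\ge\mathbf{E}_x[V(S(T));\tau>T,\,S(T)\in W_{T,\varepsilon}]\ge c\,\mathbf{P}_x(\tau>T,\,S(T)\in W_{T,\varepsilon}),
\]
so the claim reduces to the uniform-in-$x$ probability bound $\mathbf{P}_x(\tau>T,\,S(T)\in W_{T,\varepsilon})\ge\exp(-Cd^{4+\delta})$.

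This probability bound would be obtained by constructing an explicit favourable event on which the walkers reach a well-separated configuration one at a time without crossing. Partition $[0,T]$ into $d$ blocks of length $T/d\ge d^3$; in block $k$ I would process walker $d-k+1$ (from the top down), requiring its position at the end of the block to land in a unit-length window around a target strictly above every previously placed target, while all other walkers move by at most $T^{1/2-\varepsilon}/4$ during the block --- an event of probability $1-o(1)$ by standard concentration for sums of independent increments with exponential moments. For the pushed walker, a local-CLT-type Gaussian lower bound using the exponential moment assumption and a sum of $\asymp T/d\ge d^3$ increments gives a per-block probability at least $\exp(-c\,d^3\,T^{-2\varepsilon})$. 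Multiplying over the $d$ blocks yields a bound $\exp(-c\,d^4\,T^{-2\varepsilon})\ge\exp(-Cd^{4+\delta})$ for $\varepsilon$ sufficiently small.

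The main obstacle is this last step: enforcing non-crossing throughout $[0,T]$ while displacing each walker by up to $\sim dT^{1/2-\varepsilon}$, uniformly over any starting point $x\in W^d$ (including configurations with very small spacings or very large coordinates). Processing walkers sequentially from the top down sidesteps the non-crossing issue inductively, since a walker once placed above the previous targets cannot be crossed by any walker yet to be processed (which only moves by at most $T^{1/2-\varepsilon}/4$); translation invariance $V(x+c\mathbf{1})=V(x)$ together with a preliminary decomposition into sub-systems separated by spacings $\ge T^{1/2-\varepsilon}$ further reduces the analysis to the case of bounded starting coordinates.
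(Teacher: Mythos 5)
Your overall strategy is the same as the paper's: use harmonicity of $V$ together with Proposition~\ref{lem:asymptotics.v} (so that $V\ge c\,\Delta\ge c$ on the well-separated set) to reduce the lemma to a lower bound of the form $\exp(-Cd^{4+\delta})$ on the probability that the walk reaches $W_{T,\varepsilon}$ without a collision, and then try to exhibit an explicit favourable event of at least that probability. The reduction step is fine. The gap is in the favourable event. Your event constrains the walker being pushed in block $k$ and requires every \emph{other} walker to move by at most $T^{1/2-\varepsilon}/4$ during that block, but it imposes no constraint on the \emph{relative order} of the walkers that are not being pushed. The lemma must hold for every $x\in W^d$, including configurations where $x_{j+1}-x_j$ is arbitrarily small; two adjacent un-pushed walkers at distance $\epsilon$ will exchange order during a block of length $T/d=d^{1/a-1}\to\infty$ with probability $1-O(\epsilon(T/d)^{-1/2})\to 1$, so your event is simply not contained in $\{\tau>T\}$. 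The same problem afflicts the walker being pushed: before it has gained any height it will dip below its starting point, hence below its left neighbour when the initial gap is tiny. Your proposed remedies (top-down processing, translation invariance, splitting into well-separated sub-systems) address crossings with \emph{already-placed} walkers and large coordinates, but not the small-spacing crossings among walkers that are still evolving freely; also, as written, placing each successive walker ``strictly above every previously placed target'' while processing from the top down would itself violate the order $S_1<\dots<S_d$.

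The paper's proof resolves exactly this point by exploiting the continuous-time structure: it conditions on the exponential clocks ringing in a prescribed order, so that at any moment only one walker moves and all the others are literally frozen — frozen walkers trivially keep their order, however small their gaps. The price is a factor $1/d$ per jump, i.e.\ $(1/d)^{d^2t^{1/2-\varepsilon'+\delta'}}=\exp(-Cd^{4+\delta})$ after matching $t$ to $d$, which is where the exponent $4+\delta$ comes from. In place of your moderate-deviation/local-CLT targeting, the paper also tilts the increments by an accept/reject scheme so that walker $j$ acquires a drift increasing in $j$ at rate $C/d$ per step; the separation into $W_{t,\varepsilon'}$ then follows from Doob's exponential inequality rather than from forcing each walker into a prescribed window. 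Your displacement estimates and the final bookkeeping are plausible, but without a mechanism that controls the order of the inactive walkers (such as the clock-freezing device), the argument does not give a bound uniform over all $x\in W^d$.
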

\begin{proof}
In the proof, we will introduce a time parameter $t > 0$ satisfying $d(t) \leq t^a$
for $a < 1/4.$
% Using the thinning of Poisson processes it is convenient to consider the following equivalent construction of the random walk process: there is a Poisson process of jumps with intensity $d$ where at each jump we uniformly at random pick a random walk that jumps. 
Let $\varepsilon'$ be such that $\varepsilon'<\frac12 -2a$. Choose $\delta' > 0$ sufficiently small. 

For sufficiently small $c > 0$ there exists  $U, W \subset \mathbb{R}$ such that 
$P(X \in U) > c$, $P(X \in W) > c$ and $\sup\{x: x \in U\} < \inf\{x : x \in W\}$. 
For each of the embedded discrete-time random walks $S_j$ we either accept/reject each proposed step respectively. 
For $i \geq 1$, if $X_{ij} \in U$ we reject the proposed step with probability $c(j-1)/((d-1)\pr(X_{ij} \in U))$ and
if $X_{ij} \in W$ we reject with probability $c(d-j)/((d-1)\pr(X_{ij} \in W))$. 
Otherwise the proposed step is accepted. 
Let $A_t$ denote the event that the $X_{ij}$ are accepted for all $1 \leq i \leq dt^{1/2-\veps' + \delta'}$ and $j = 1, \ldots, d$. Note that $\pr(A_t) = p^{d^2t^{1/2-\veps' + \delta'}}$ for $p := 1 -c$.
Let $X_{ij}'$ denote $X_{ij}$
conditioned on the step being accepted and $S_j'$ the corresponding random walks.

We have for each $j = 1, \ldots, d-1$ and an absolute constant $C$,
\begin{align}
\label{mean_shift}
\E[X_{i,j+1}']-\E[X_{i,j}']\geq 
\frac{C}{d}.
\end{align}
To justify this, let $\mathcal{R}$ denote the event that a proposed step is rejected. Then,
\begin{align*}
&\E[X_{i, j+1}; \mathcal{R}^c]-\E[X_{i, j}; \mathcal{R}^c]
 = \E[X_{i, j}; \mathcal{R}]- \E[X_{i, j+1}; \mathcal{R}] \\
& = \frac{c(j-1)\E[X_{ij}; X_{ij} \in U]}{(d-1)\pr(X_{ij} \in U)} + \frac{c(d-j) 
\E[X_{ij}; X_{ij} \in W]}{(d-1) \pr(X_{ij} \in W)}\\
& \qquad - \frac{c(j+1-1)\E[X_{ij}; X_{ij} \in U]}{(d-1)\pr(X_{ij} \in U)}
-\frac{c(d-j-1) 
\E[X_{ij}; X_{ij} \in W]}{(d-1) \pr(X_{ij} \in W)}\\
& \geq \frac{c}{d-1}\left(\E[X_{ij} \vert X_{ij} \in W] - \E[X_{ij} \vert X_{ij} \in U]\right).
\end{align*}
Using $\sup\{x: x \in U\} < \inf\{x : x \in W\}$, we can obtain \eqref{mean_shift}.
% Suppose firstly that there is a region $[a, b]$ where the density exists and has
% $f_X(x) \geq c > 0$ for all $x \in [a, b]$. We uniformly sample 
% under the two-dimensional graph defined by the density and 
% reject in regions $A_j^c = [(a+b)/2-k(d-j)/(d-1), b-k(d-j)/(d-1)]\times [0, c]$
% for a sufficiently small $k$ such that these regions are subsets of 
% $[a, b]\times[0, c]$.
% Then $\eqref{mean_shift}$ is satisfied. Otherwise for a random variable where there are $a < b$ such that
% $\pr(X = a) = p$ and $\pr(X = b) = q$ then we view this as a projection onto the first co-ordinate of a uniformly chosen random variable in the region $(\{a\} \times [0, p]) \cup (\{b\} \times [0, q]).$
% The rejection region is then chosen to be 
% $A_j^c = \{a\} \times [0, k(j-1)/(d-1)] \cup \{b\} \times [0, k(d-j)/(d-1)]$
% for a sufficiently small $k$
% and the projection onto the first co-ordinate satisfies
% \eqref{mean_shift} (abusing notation by not distinguishing between the random variable and its projection).

Consider the event $D_t$ where 
the $d$-th walker jumps $dt^{1/2-\varepsilon'+\delta'}$ times 
while the other walkers stay in the same place. 
Then after the $k$-th walker has performed its jumps the $(k-1)$-th walker jumps $dt^{1/2-\varepsilon'+\delta'}$ times while the other walkers do not move, and so on.
% Generally, the $k$-th component of the random walk jumps 
% $d t^{1/2-\varepsilon}$ times with $X_{ik}\in A_{k}$ 
% while the other random walks do not move. 
Note that 
\[
\pr(A_{t} \cap D_t) = \left(\frac{p}{d}\right)^{d^2t^{1/2-\varepsilon'+\delta'}}
\ge C_1\exp(-C_2 d^2 t^{1/2-\varepsilon'+\delta''})
\]
for some absolute constants $C_1,C_2$ and $\delta'' > \delta'$. 
Put 
\[
\sigma_t:=\sum_{k=1}^d \sum_{i=1}^{dt^{1/2-\varepsilon'+\delta'} } e_{ik}. 
\]
%Note that $\tau>\sigma_t$ on $B$.  
Moreover, since the conditioned random walks $S'$ satisfy \eqref{mean_shift},
\begin{align}\label{eq:Dt}
\pr(S'(\sigma_t) \in W_{t, \varepsilon'}, \tau>\sigma_t \mid D_t) 
 & \geq 1-\sum_{j=1}^{d-1} \pr(S'_{j+1}(\sigma_t)-S'_j(\sigma_t) < t^{1/2-\varepsilon'} \mid D_t) \nonumber\\
& \qquad -
\sum_{j=1}^{d-1}
\pr(\max_{s\le \sigma_t}S'_j(s)>S'_{j+1}(\sigma_t)\mid D_t) \nonumber\\
 & \geq 1/2
\end{align}
for sufficiently large $t$.
Indeed, putting $U'_j(n) = \sum_{i=1}^n (X_{ij}'-\E[X'_{ij}])$ 
and using~\eqref{mean_shift} 
we can see that 
\begin{multline*}
\sum_{j=1}^{d-1} \pr(S'_{j+1}(\sigma_t)-S'_j(\sigma_t) < t^{1/2-\varepsilon'} \mid D_t)\\
=
\sum_{j=1}^{d-1} \pr(U'_j(dt^{1/2-\varepsilon'+\delta'})>U'_{j+1}(dt^{1/2-\varepsilon'+\delta'})+Ct^{1/2-\varepsilon'+\delta'}-t^{1/2-\varepsilon'}).
\end{multline*}
Also,
\begin{multline*}
\sum_{j=1}^{d-1}
\pr(\max_{s\le \sigma_t}S'_j(s)>S'_{j+1}(\sigma_t)\mid D_t)\\
=\sum_{j=1}^{d-1} 
\pr\left(\max_{k\le dt^{1/2-\varepsilon'+\delta'}}
U'_j(k)>U'_{j+1}(dt^{1/2-\varepsilon'+\delta'})+Ct^{1/2-\varepsilon'+\delta'}\right)
\end{multline*}
Applying Doob's exponential inequality we can see now that both sums converge to $0$ proving~\eqref{eq:Dt} for sufficiently large $t$. 

Note also that 
\[
\pr\left(
\sigma_t
> t
\right)
\le Ce^{-Ct}.
\]
Next note that since $V$ is harmonic, 
\begin{align*}
V(x) &= \E_x[V(S(\sigma_t\wedge t));\tau>\sigma_t\wedge t]\\
&\ge 
\E_x[V(S(\sigma_t));\sigma_t\le  t, \tau>\sigma_t, A_{t}, D_t]\\
& \ge 
\E_x[V(S'(\sigma_t)); \sigma_t\le  t, \tau>\sigma_t,  S'(\sigma_t) \in W_{t, \varepsilon'}  \vert D_t]
\pr(A_{t} \cap D_t).
\end{align*}
Since $\varepsilon'<\frac 12-2a$ , by Proposition~\ref{lem:asymptotics.v} 
with $\varepsilon=\varepsilon'$, for sufficiently large $t$, 
\begin{align*}
V(x)&\ge 
\frac{1}{2}\E_x[\Delta(S'(\sigma_t)); \sigma_t\le  t,\tau>\sigma_t, S'(\sigma_t) \in W_{t, \varepsilon'} \vert D_t] \pr(A_{t} \cap D_t)\\
&\ge \frac{1}{2} (t^{1/2-\varepsilon'})^\frac{d(d+1)}{2}
\pr(\sigma_t\le  t,\tau>\sigma_t, S'(\sigma_t) \in W_{t, \varepsilon'} \mid  D_t)\pr(A_t \cap D_t)\\
& \ge 
\frac{1}{2} \pr(S'(\sigma_t) \in W_{t, \varepsilon'},\tau>\sigma_t \mid  D_t)\pr(A_t \cap D_t)-\pr(\sigma_t>t)\\
& \ge  C_1\exp(-C_2 d^2 t^{1/2-\varepsilon'+\delta''}). 
\end{align*}
We now choose $2a < 1/2 - \veps' = 2a+a\delta''' < 1-2a$ for $a < 1/4$ and
sufficiently small $\delta'''.$
Finally $\delta'', \delta'''$ can be chosen to set $\delta'' + a\delta'''
= \delta$ which gives the stated result.
\end{proof}

Throughout this section we will use the following notation: for any sequence $1 \leq l_1 < \ldots < l_k \leq d,$
\begin{align*}
h_{[l_1, \ldots, l_k]}(x) & = \E\bigg[\prod_{1 \leq i<j \leq d: (i, j) \notin \{l_1, \ldots, l_k\}} (x_j - x_i + \eta_{j} - \eta_i) \bigg]\\
x_{[l_1, \ldots, l_k]} & = (x_1, \ldots, x_{l_1 - 1}, x_{l_1 + 1}, \ldots, x_{l_2 -1}, x_{l_2 + 1}, \ldots, x_{l_k-1}, x_{l_k+1}, \ldots, x_d) \\
S_{[l_1, \ldots, l_k]} & = (S_1, \ldots, S_{l_1 - 1}, S_{l_1 + 1}, \ldots, S_{l_2 -1}, S_{l_2 + 1}, \ldots, S_{l_k - 1}, S_{l_k+1}, \ldots, S_d)
%W_{[l_1, \ldots, l_k]} & = \{x_{[l_1, \ldots, l_k]} : x_{[l_1, \ldots, l_k]}  \in W_{d-k}\}
%W_{[l_1, \ldots, l_k]} = \{x_1 < \ldots < x_{l_1 - 1} < x_{l_1 + 1} < \ldots x_{l_2 -1} < \ldots <x_{l_k}+1 < \ldots < x_d\} 
\end{align*}
and $\tau_{[l_1, \ldots, l_k]}$ is the first exit time for $S_{[l_1, \ldots, l_k]}$ from $W^{d-k}.$
In this section $C$ is a constant that does not depend on $d, T, x$. Its value does not play any role 
and we allow it to change from line to line.

We will make use of the following 
proposition that couples continuous-time random walks with Brownian motion. 
In the discrete-time case this follows from
 Sakhanenko~\cite[Theorem 1]{Sakhanenko06}. 
%Let $F$ be the distribution function of $X_{11}$.  
\begin{proposition}\label{prop.coupling}
Let $\beta > 0.$
There exists $t_0 > 0$ such that for all $t \geq t_0$
there exists a Brownian motion $B_j$ such that 
\[
 \pr(\sup_{s \leq t} \lvert S_j(s) - B_j(s) \rvert \geq t^{\beta}) 
 \leq Ce^{-t^{\beta/2}}, 
\]
% \[
% 	\pr \left(\sup_{t\le n}|S_j(t)-B_j(t)|\ge C_1\alpha x, \max_{1\le i\le n} |X_{ij}|\le y\right) 
%   \le \left(\frac{n\E|X_{11}|^\alpha}{y^\alpha}\right)^{x/y}, 
% \]
where $C$ is an absolute constant.
\end{proposition}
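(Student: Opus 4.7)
The plan is to bootstrap from the discrete-time Sakhanenko coupling to the continuous-time case by separately controlling the random Poisson time change. Write $S_j(t) = x_j + \tilde{S}_j(N_j(t))$, where $\tilde{S}_j(n) := \sum_{i=1}^n X_{ij}$ is the embedded discrete-time walk and $N_j(t)$ is the rate-one Poisson process counting the jumps of $S_j$ up to time $t$; by construction these two processes are independent. Since the $X_{ij}$ are centered, unit-variance, and possess an exponential moment under Condition (i), Sakhanenko's theorem furnishes, on an enlarged probability space, a Brownian motion $\tilde{B}_j$ coupled to the sequence $(X_{ij})_i$ such that
\[
\pr\left(\sup_{n \leq 2t} \bigl\lvert \tilde{S}_j(n) - \tilde{B}_j(n) \bigr\rvert \geq \tfrac{1}{2} t^{\beta} \right) \leq C e^{-c t^{\beta}}.
\]
The candidate Brownian motion is then $B_j := \tilde{B}_j$, and the triangle inequality gives
\[
\bigl\lvert S_j(s) - B_j(s) \bigr\rvert \leq \bigl\lvert \tilde{S}_j(N_j(s)) - \tilde{B}_j(N_j(s)) \bigr\rvert + \bigl\lvert \tilde{B}_j(N_j(s)) - \tilde{B}_j(s) \bigr\rvert.
\]

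Two auxiliary estimates then control the right-hand side uniformly for $s \leq t$. First, Bernstein's inequality combined with Doob's maximal inequality applied to the martingale $N_j(s) - s$ yields
\[
\pr\left(\sup_{s \leq t} \bigl\lvert N_j(s) - s \bigr\rvert > t^{1/2 + \beta/4}\right) \leq C e^{-c t^{\beta/2}},
\]
and on the complement of this event one has $N_j(t) \leq 2t$, so the first term in the decomposition is at most $\tfrac{1}{2} t^\beta$ via Sakhanenko. Second, the Brownian modulus of continuity: splitting $[0, 2t]$ into $O(t^{1/2 - \beta/4})$ overlapping windows of length $t^{1/2 + \beta/4}$ and applying the reflection principle on each window yields
\[
\pr\left(\sup_{\substack{u, v \leq 2t \\ \lvert u - v \rvert \leq t^{1/2 + \beta/4}}} \bigl\lvert \tilde{B}_j(u) - \tilde{B}_j(v) \bigr\rvert > \tfrac{1}{2} t^{\beta} \right) \leq C t^{1/2} e^{-c t^{7\beta/4 - 1/2}}.
\]
Provided $\beta$ is large enough that $7\beta/4 - 1/2 \geq \beta/2$, i.e.\ $\beta \geq 2/5$, this quantity is in turn dominated by $C e^{-c t^{\beta/2}}$. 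A union bound over the three events and a harmless rescaling of constants delivers the stated inequality.

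The delicate point is the exponent-balancing in the time-change step. The window scale $t^{1/2+\beta/4}$ is a compromise between Poisson concentration, which worsens as the window shrinks, and the Brownian modulus of continuity, which worsens as it grows; this trade-off produces the $\beta/2$ exponent in the conclusion and implicitly constrains the admissible range of $\beta$. The Sakhanenko step is essentially off-the-shelf; the new ingredient is the coupling of the compound Poisson process to a standard Brownian motion, and care is needed mainly to ensure that the independence of $N_j$ from $(X_{ij})_i$ transfers to independence of $N_j$ from $\tilde{B}_j$ in the coupled space.
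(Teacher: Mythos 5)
Your decomposition $\lvert S_j(s) - B_j(s)\rvert \le \lvert \tilde S_j(N_j(s)) - \tilde B_j(N_j(s))\rvert + \lvert \tilde B_j(N_j(s)) - \tilde B_j(s)\rvert$, with $B_j := \tilde B_j$ the Sakhanenko partner of the embedded walk, has a genuine gap: as your own exponent count shows, it only yields the stated bound when $\beta \ge 2/5$, and this restriction is not an artifact of your estimates but is intrinsic to the route through the time change. The Poisson clock satisfies $N_j(s) - s \asymp \sqrt{t}$ for $s$ near $t$, so $\tilde B_j(N_j(s)) - \tilde B_j(s)$ is typically of order $t^{1/4}$, and optimizing the window length between Poisson concentration and the Brownian modulus of continuity gives exactly the threshold $\beta \ge 2/5$ for a tail of size $e^{-ct^{\beta/2}}$. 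But the proposition is needed in the paper for small $\beta$: the proof of Theorem~\ref{thm:airy} requires $2a < \beta < \gamma$ and $2a+\beta < 1/2 - \veps$, which with $a < 3/50$ and $\veps = 13/50$ forces $\beta < 6/25 - 2a < 2/5$. So your argument does not prove the proposition in the regime where it is actually used.

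The paper avoids the time change entirely by applying Sakhanenko not to the embedded walk $\tilde S_j$ but to the integer skeleton of the compound Poisson process itself: $(S_j(n))_{n\ge 0}$ is a discrete-time random walk whose i.i.d.\ increments $\sum_{i=N_j(n-1)+1}^{N_j(n)} X_{ij}$ are centered, have unit variance, and retain an exponential moment, so Sakhanenko couples them directly to $B_j$ at integer times. One is then left only with the oscillations of $S_j$ and of $B_j$ over unit time intervals, which are controlled by the exponential martingale of the compound Poisson process and Doob's inequality at cost $e^{-ct^{\beta}}$ and $e^{-ct^{\beta/2}}$ (after truncating increments at $t^{\delta}$ with $\delta = \beta/2$). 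If you want to salvage your write-up, replace the object fed into Sakhanenko accordingly; the rest of your machinery (truncation, Doob, union bound) then goes through for all small $\beta > 0$.
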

\begin{proof}
For $j = 1, \ldots, d$ set $U_j(n) = \sum_{i=1}^n X_{ij}$
for $n \geq 1$ and let $N_j(t)$ be 
a standard Poisson process. 
Then we have the following representation, 
\[
S_j(t) = U_j(N_j(t)), \quad t \geq 0.
\]
Next note that $(S_j(n))_{n\ge 0}$ is a random walk with increments distributed as $\sum_{i=1}^{N_j(1)} X_{ij}$. 
Hence, by Sakhanenko~\cite[Theorem 1]{Sakhanenko06} there exists a Brownian motion $B_j$ such that for $\delta > 0$
\begin{align*}
& \pr(\sup_{k \leq [t]+1} \lvert S_j(k) - B_j(k) \rvert \geq t^{\beta}/2)\\
& \leq \pr(\sup_{k \leq [t]+1} \lvert S_j(k) - B_j(k) \rvert \geq t^{\beta}/2, \max_{1 \leq i \leq [t]+1} \lvert S_{j}(i)-S_j(i-1) \rvert \leq t^{\delta})\\
 &\hspace{1cm}+ \pr(\max_{1 \leq i \leq [t]+1} \lvert S_{j}(i)-S_j(i-1) \rvert > t^{\delta}))\\
& \leq e^{-ct^{\beta - \delta}} 
+2t\pr(\lvert S_{j}(1) \rvert > t^{\delta})).
%+ e^{-ct^{\delta}}.
\end{align*}

Then, we have 
\begin{align}
\label{cont_skorokhod.another}
&   \pr(\sup_{s \leq t} \lvert S_j(s) - B_j(s) \rvert \geq t^{\beta}) 
 \leq 
\pr(\sup_{k \leq [t]+1} \lvert S_j(k) - B_j(k) \rvert \geq t^{\beta}/2) \\
& \qquad +\pr(\sup_{k\le [t]+1} \sup_{s\in [0,1]} |S_j(k+s)-S_j(k)|>t^\beta/4) \nonumber\\
& \qquad +\pr(\sup_{k\le [t]+1} \sup_{s\in [0,1]} |B_j(k+s)-B_j(k)|>t^\beta/4)
 \nonumber
 \\ & 
 \le 
 e^{-ct^{\beta - \delta}} 
+2t\pr(\lvert S_{j}(1) \rvert > t^{\delta})) \nonumber\\
& \qquad +2t \pr(\sup_{s\in [0,1]} |S_j(s)|>t^\beta/4)
 +2t\pr(\sup_{s\in [0,1]} |B_j(s)|>t^\beta/4)
\nonumber\\
& \leq e^{-ct^{\beta - \delta}} + e^{-ct^{\delta}}
+e^{-ct^\beta}
\nonumber. 
\end{align}
Here, we make use of the exponential martingale 
of the compound Poisson processes for $s\in [0,1]$ and sufficiently small $\theta$
\[
\frac{e^{\theta S_j(s)}}{\E e^{\theta S_j(s)}}
= e^{\theta S_j(s) - s\E[e^{\theta X_{ij}}]+s}
\]
and the Doob  inequality. 
Taking $\delta=\frac{\beta}{2}$ we arrive at the conclusion.
\end{proof}

Let $\mathcal C_t:=\cap_{j=1}^d \mathcal C_{j,t}$, where for $\beta > 0$,
\[
\mathcal C_{j,t}:=\left\{\sup_{0\le s\le t}|B_j(s)-S_j(s)|\le t^{\beta} \right\}.
\]
Recall $\tau^{\text{bm}} = \inf\{t \geq 0: B(t) \notin W^d\}$. 
\begin{lemma}
\label{lem:error_bounds1}
Let $a < 1/2$, $\beta > 0$ and $\theta > \max(1, \beta/2)$.
\begin{enumerate}
    \item Suppose $\veps \in (0, 1/2)$ and $2a < 1/2 - \veps$.
There exists $t_0 > 0$ and $C > 0$ such that uniformly in $d(t) \leq t^a$ and $t > t_0$, 
for all $x \in W_{t,\varepsilon}$ with $\lvert x_j \rvert \leq t^{\theta}/\log(t)$,
\begin{align*}
\E_x[V(S(t)); \tau > t, \mathcal C_t^c] & \leq CV(x) e^{\theta t^{a} \log(2t)-t^{\beta/2}}.
\end{align*}
\item
There exists $t_0 > 0$ and $C > 0$ such that uniformly in $d(t) \leq t^a$ and $t > t_0$,
 for all $x \in W^d$ with $\lvert x_j \rvert \leq t^{\theta}/\log(t)$ and $x_j - x_{j-1} \geq 1$ for all $j = 2, \ldots, d,$
\begin{align*}
\E_{x}[\Delta(B(t)); \tau^{\mathrm{bm}} > t, \mathcal C_t^c] & \leq C\Delta(x) e^{\theta t^a \log(2t)-t^{\beta/2}}.
\end{align*}
\end{enumerate}
\end{lemma}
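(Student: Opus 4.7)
The plan is to factor out the coupling event $\mathcal{C}_t^c$ via H\"older's inequality and bound the resulting moments of $V(S(t))$ or $\Delta(B(t))$ using the tools already developed in the paper. In both parts, a union bound over $j \in \{1, \dots, d\}$ together with Proposition~\ref{prop.coupling} gives
\[
\pr(\mathcal{C}_t^c) \leq \sum_{j=1}^d \pr(\mathcal{C}_{j,t}^c) \leq d C e^{-t^{\beta/2}}.
\]

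For Part 1, Proposition~\ref{superharmonic} gives $V \leq h$, so it suffices to bound $\E_x[h(S(t)); \mathcal{C}_t^c]$. After applying H\"older with conjugate exponents $p, q$, the key step is to estimate the $L^p$ norm of $h(S(t))$. Using the representation $h(y) = \E^\eta[\Delta(y+\eta)]$ and Jensen's inequality, this reduces to moments of $\Delta(S(t))$. Since $\Delta$ is harmonic for the free walk without killing, for each fixed $\eta$ one has $\E_x[\Delta(S(t)+\eta)^p] = \E_{x+\eta}[\Delta(S(t))^p]$, which Lemma~\ref{lem:hoelder} bounds by a polynomial in $t$ times $\Delta(x+\eta)^p$. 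The average over $\eta$ is controlled using the exponential moment bound on $\zeta$ together with the spacings of $x \in W_{t,\varepsilon}$, as in the proof of Lemma~\ref{lem.delta.h}, giving $\E^\eta[\Delta(x+\eta)^p]^{1/p} \leq C \Delta(x)$. Proposition~\ref{lem:asymptotics.v} finally converts $\Delta(x)$ into $V(x)$.

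Part 2 proceeds along the same lines but is simpler: $\Delta$ itself is harmonic for free Brownian motion killed at $\tau^{\text{bm}}$, so no $\eta$-averaging is needed. I would apply H\"older directly to $\E_x[\Delta(B(t)); \tau^{\text{bm}} > t, \mathcal{C}_t^c]$ and bound the moments of $\Delta(B(t))$ by a Gaussian analogue of Lemma~\ref{lem:hoelder}, with the Gaussian tails of $B_j(t)$ replacing the exponential tails used there; the weaker spacing hypothesis $x_j - x_{j-1} \geq 1$ is enough since $\Delta(x)$ appears directly on the right-hand side, without any $\eta$-correction to bound.

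The main obstacle is the delicate balancing of the H\"older exponents. The polynomial-in-$t$ factor of degree $\sim d^2$ coming from Lemma~\ref{lem:hoelder} competes against the factor $e^{-t^{\beta/2}/q}$ from the coupling tail, and the slack $\theta > 1$ in the statement is precisely what allows these two pieces to combine into the claimed $e^{\theta t^a \log(2t) - t^{\beta/2}}$ under the constraints $d \leq t^a$ and $\beta > 2a$. Carrying out the exponent arithmetic with $p$ close to $1$ and tracking how the condition $|x_j| \ll t^\theta$ interacts with the $\eta$-average and with the $\Delta(x) \sim V(x)$ reduction forms the technical core of the argument.
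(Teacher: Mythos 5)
Your Hölder-based route does not reach the bound as stated, and the loss is exactly where the lemma's strength lies. Writing $\E_x[h(S(t));\mathcal C_t^c]\le \E_x[h(S(t))^p]^{1/p}\,\pr(\mathcal C_t^c)^{1/q}$ and feeding the first factor through Lemma~\ref{lem:hoelder} produces $C\Delta(x)\,t^{d^2}=C\Delta(x)\,e^{d^2\log t}$, i.e.\ a factor $e^{t^{2a}\log t}$ under $d\le t^a$, whereas the stated prefactor is only $e^{\theta t^a\log(2t)}$ (degree $\theta d$, not $d^2$). Simultaneously the coupling tail is damped to $e^{-t^{\beta/2}/q}$ rather than $e^{-t^{\beta/2}}$; your suggestion to take $p$ close to $1$ makes this worse, since then $q\to\infty$ and $\pr(\mathcal C_t^c)^{1/q}\to 1$. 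With any fixed $p$ your estimate is $CV(x)e^{Ct^{2a}\log t-t^{\beta/2}/q}$, which does not imply the claimed inequality and, downstream, forces $\beta>4a$ in place of $\beta>2a$ in the proof of Theorem~\ref{thm:airy}, degrading the exponent $3/50$.

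The paper avoids Hölder on the main term by an independence argument you are missing. It first splits off the events $\mathcal T_t^c=\{S_d(t)-S_1(t)>t^\theta\}$ and $\{\eta_d>t\}$ (inside the $\eta$-expectation defining $h$); these carry the $t^{d^2}$ factors but are paired with $e^{-rt^\theta}$, $e^{-rt}$ decay, far stronger than $e^{-t^{\beta/2}}$, so they are harmless. On the complementary event every factor of $h(S(t))$ involving the index $j$ is at most $2t^\theta+2t\le (2t)^\theta$, whence $h(S(t))\le (2t)^{\theta d}h_{[j]}(S(t))$ — this is where the degree drops from $d^2$ to $\theta d$. Crucially, $h_{[j]}(S_{[j]}(t))\,1(\tau_{[j]}>t)$ involves only the walks with indices $\ne j$ and is therefore \emph{independent} of $\mathcal C_{j,t}$, which is an event about $(S_j,B_j)$ alone; the probability $\pr(\mathcal C_{j,t}^c)\le Ce^{-t^{\beta/2}}$ then factors out exactly, with no Hölder exponent, and the supermartingale property of $h_{[j]}$ closes the estimate with $h_{[j]}(x_{[j]})\le h(x)\le CV(x)$. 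Your reductions of $\E^\eta[\Delta(x+\eta)^p]$ via the Lemma~\ref{lem.delta.h} computation and the final passage $\Delta(x)\le CV(x)$ are fine, but without the truncation-plus-independence step the quantitative form of the lemma is out of reach.
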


\begin{proof}
Let
\[
	\mathcal T_t:= \{S_d(t) - S_1(t) \leq t^\theta\}
	%\cap_{j=1}^d\{|S_j(n)|\le n\}.
\]
and recall the definition of $h$ in terms of $\eta_1, \ldots, \eta_d$ from Section \ref{sec:superharmonic}.
Note that 
\begin{align}
	\label{eq:cjn}
\E_{x}[V(S(t));\tau>t,\mathcal C_t^c]
\le\sum_{j=1}^d \E_{x}[V(S(t));\tau>t,\mathcal C_{j,t}^c, \mathcal T_t, \eta_d \leq t^{\theta}] \nonumber \\+ 
\E_{x}[V(S(t));\tau>t,\mathcal T_t^c]
+ \E_{x}[V(S(t));\tau>t, \eta_d > t^{\theta}, \mathcal T_t]. 
\end{align}
%Denote $h_{[j]}(x):=\prod_{1\le i<k\le d, i\neq j, k\neq j} (x_k-x_i + \eta_{k+1} - \eta_i)$.
%Let $W_j:=\{x\colon x_1<\ldots<x_{j-1}<x_{j+1}<\ldots x_d\}$ be 
%$(d-1)$-dimensional Weyl chamber and $\tau_{[j]}$ the exit time of 
%$S' = (S_1, \ldots, S_{j-1}, S_{j+1}, \ldots, S_d)$ from this chamber. 
If $\eta_d \leq t^{\theta}$ and $S_d(t) - S_1(t) \leq t^\theta$ then $h(S(t)) \leq (2t)^{\theta d} h_{[j]}(S(t))$. Therefore,
\begin{align*}
\E_{x}[V(S(t));\tau>t,\mathcal C_{j,t}^c, \mathcal T_t, \eta_{d} \leq t^{\theta}] 
& \leq \E_{x}[h(S(t));\tau>t,\mathcal C_{j,t}^c, \mathcal T_t, \eta_{d} \leq t^{\theta}]\\
& \le 
(2t)^{\theta d} \E_{x}[h_{[j]}(S(t));\tau>t,\mathcal C_{j,t}^c]\\ 
& \le 
(2t)^{\theta d} \E_{x_{[j]}}[h_{[j]}(S_{[j]}(t));\tau_{[j]}>t]\pr(\mathcal C_{j,t}^c)\\
& \leq h_{[j]}(x_{[j]}) (2t)^{\theta d}  
\pr (\mathcal C_{j,t}^c),
\end{align*}
where we used the fact that  $h_{[j]}(S_{[j]}(t))1(\tau_{[j]}>t)$ is a supermartingale. 
Now applying Proposition~\ref{prop.coupling}, \[
	\pr(\mathcal C_{j,t}^c) \le C e^{-t^{\beta/2}}.
\]
%provided $\E|X_{11}|^{1/\delta}<\infty$. 
Then, 
\begin{align*}
\E_{x}[V(S(t));\tau>t,\mathcal C_{j,t}^c, \mathcal T_t, \eta_d \leq t^{\theta}] & \le 
C h_{[j]}(x)\exp\left(\theta t^a\log(2t)-t^{\beta/2}\right) \\
& \leq  C h(x)\exp\left(\theta t^a\log(2t)-t^{\beta/2}\right).
\end{align*} for sufficiently large $t$.
Note that $h_{[j]}(x) \leq h(x)$ holds for all $x$ by the condition that the random variables $\zeta_j$ in the definition of $h$ take values in $[1, \infty)$.
Finally we use that $h(x) \leq CV(x)$ since $x \in W_{t, \varepsilon}$ by Proposition~\ref{lem:asymptotics.v} and Lemma~\ref{lem.delta.h}.

Consider the second term in \eqref{eq:cjn}. Since $x \in W_{t, \varepsilon}$
we have $V(x) \leq h(x) \leq\ C\Delta(x)$ by Proposition~\ref{superharmonic} and Lemma~\ref{lem.delta.h} after which we can use the bound $\Delta(x) \leq (x_d - x_1)^{d^2}$ for $x \in W^d$.
% The function $V(x)$ is increasing in differences $(x_j - x_i)$. For every $x \in W$, we set 
% \[
% x_j^* = x_j + (j-1) t^{1/2-\varepsilon}, \quad j = 1, \ldots, d.
% \]
% Since $x^* \in W_{t, \varepsilon}$, 
% \begin{equation}
% \label{eqn:v_bound}
% V(x) \leq V(x^*) \leq C\Delta(x^*) \leq C(x_d - x_1 + d t^{1/2-\varepsilon})^{d^2}.
% \end{equation}
Therefore,
\begin{align*}
& \E_x[V(S(t)); \tau > t, (S_d(t) - S_1(t)) > t^\theta] \\
& \leq C \E_x[(S_d(t) - S_1(t))^{d^2}; (S_d(t) - S_1(t)) > t^\theta].
% & \leq C\exp(d^3 t^{1/2 - \varepsilon}) \E[(S_d(t) - S_1(t))^{d^2};  (S_d(t) - S_1(t)) > \textcolor{blue}{t^c}].
\end{align*}
For every $y \geq t^\theta$ we have $\pr(S_d(t) - S_1(t) \geq y) \leq e^{-ry}$ 
for some $r > 0$ for $\lvert x_j \rvert \leq t^{\theta}/\log(t)$. 
Therefore,
\[
\E_x[(S_d(t) - S_1(t))^{d^2};  (S_d(t) - S_1(t)) > t^{\theta}] \leq Ct^{\theta d^2} e^{-rt^\theta} \rightarrow 0
\]
as $a < 1/2 < \theta/2$.
Finally consider the third term in \eqref{eq:cjn}. For some $r_1 > 0$,
\begin{align*}
\E_x\bigg[\prod_{i < j} (S_j(t) - S_i(t) + \eta_{j} - \eta_i); \eta_d > t^{\theta}, \mathcal T_t\bigg]
& \leq \E[(t + \eta_d)^{d^2}; \eta_d > t^{\theta}]\\
& \leq C t^{\theta d^2} e^{-r_1 t^{\theta}} \rightarrow 0
\end{align*}
since $a < 1/2 < \theta/2$.
The condition $\theta > \beta/2$ ensures the second and third terms in \eqref{eq:cjn} do not contribute at the leading order, 
which allows a simpler form for the right hand side in the final statement in (i). 

%{
%\color{blue}
% To finish the argument we apply Lemma~\ref{lem.lower.rough} that gives 
% \[
% 1\le C_1V(x)\exp(C_2 d^2 t^{1/2-\varepsilon} \log d)  
% \]
% and hence the final result. 
% }

The second equation follows by a similar argument. 
The only difference is that for the analogue of $h_j(x) = o(h(x))$ in the case of the Vandermonde determinant we impose the condition $x_{j+1}-x_j \geq 1$ for all $j = 2, \ldots, d$ but we do not require that $x \in W_{t, \varepsilon}$ and Proposition \ref{lem:asymptotics.v} is not needed.
%For part (i)
%\begin{align*}
%\E_x[V(S(n)); \tau > n, \mathcal C_n^c, \mathcal{A}_n]
%& \leq \E_x[h(S(n)); \tau > n, \mathcal C_n^c, \mathcal{A}_n] \\
%& = (1+o(1))  \E_x[\Delta(S(n)); \tau > n, \mathcal C_n^c, \mathcal{A}_n] 
%\end{align*}
%On the event $\mathcal{A}_n$, $h \sim \Delta$ and the above argument can be applied. 
\end{proof}
Recall \[
\nu_t = \inf\{r \geq 0: S(r) \in W_{t, \veps}\}.
\]
\begin{lemma}
\label{lem:error_bounds2}
Let $a < 1/4$, $\theta > 1$ and $\veps \in (0, 1/2)$. There exists $t_0 > 0, \delta > 0$ and $c, C > 0$ such that uniformly in $d(t) \leq t^a$ and $t > t_0$ and $\varepsilon' < \varepsilon$, for all $x \in W^d$
with $\lvert x_j \rvert \leq t^{\theta}/\log(t)$,
\begin{align*}
\E_{x}[V(S(t)); \tau > t, \nu_t > t^{1-\veps}] & \leq CV(x)
e^{t^{(4+\delta) a} - ct^{\epsilon'}};
\end{align*}
and for all $x \in W^d$ with $x_{j+1} - x_j \geq 1$ for all $j = 2, \ldots, d$,
\begin{align*}
\E_{x}[\Delta(B(t)); \tau > t, \nu_t > t^{1-\veps}] & \leq C\Delta(x)e^{\theta t^{2a}\log(t) - ct^{\epsilon'}}.
\end{align*}
\end{lemma}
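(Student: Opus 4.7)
The plan is to apply Cauchy--Schwarz to separate the expectation into a second-moment factor and a probability factor, control the probability by Lemma~\ref{lem:repulsion}, bound the second moment by elementary moment estimates on the random walk coordinates (and on $\eta_d$ in the $V$ case), and—for the first statement—use the crude lower bound on $V(x)$ from Lemma~\ref{lem.lower.rough} to turn the resulting absolute bound into one proportional to $V(x)$.

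For the Brownian statement, observe that $\{\tau^{bm}>t\}\subseteq\{\tau^{bm}>t^{1-\veps}\}$, so the Brownian analogue of Lemma~\ref{lem:repulsion} (whose proof only uses the renewal/ladder-height structure of the gaps $S_{j+1}-S_j$, and transfers verbatim to Brownian motion where each gap is a Brownian motion up to a $\sqrt{2}$ factor) gives
$$\pr_x(\tau^{bm}>t,\nu>t^{1-\veps})\le e^{-Ct^\veps}.$$
By Cauchy--Schwarz,
$$\E_x[\Delta(B(t));\tau^{bm}>t,\nu>t^{1-\veps}] \le \E_x[|\Delta(B(t))|^2]^{1/2}\,\pr_x(\tau^{bm}>t,\nu>t^{1-\veps})^{1/2},$$
and Lemma~\ref{lem:hoelder} with $p=2$ (whose condition $x_{i+1}-x_i\ge 1$ is exactly the hypothesis of the second claim) bounds the moment by $C\Delta(x)t^{d^2}$. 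Since $d\le t^a$ gives $t^{d^2}\le e^{t^{2a}\log t}$, the product is $\le C\Delta(x)e^{t^{2a}\log t-Ct^\veps/2}$, which implies the stated inequality for any $\veps'<\veps$ after choosing $c$ small enough.

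For the random walk statement the argument is parallel. Cauchy--Schwarz and Lemma~\ref{lem:repulsion} give
$$\E_x[V(S(t));\tau>t,\nu>t^{1-\veps}] \le \E_x[V(S(t))^2]^{1/2}\,e^{-Ct^\veps/2}.$$
Using $V\le h$ from Proposition~\ref{superharmonic} together with the pointwise bound $h(y)\le (2\max\{1,\max_j|y_j|,\eta_d\})^{d^2}$, the independence of the coordinates $S_j$, and the exponential moment bounds on $X$ (from Condition (i)) and on $\zeta$ (which follows from Condition (ii)' and hence Condition (ii)), standard moment estimates yield $\E_x[V(S(t))^2]^{1/2}\le e^{Ct^{2a}\log t}$ uniformly in $|x_j|\ll t^\theta$ and $d\le t^a$. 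To produce the required $V(x)$ factor, apply Lemma~\ref{lem.lower.rough} in the form
$$V(x)^{-1}\le C_1^{-1}e^{C_2 d^{4+\delta'}}\le C_1^{-1}e^{C_2 t^{(4+\delta')a}}$$
valid for any $\delta'>0$. Multiplying and dividing by $V(x)$ gives an overall upper bound of the form $CV(x)\exp\!\bigl(C_2 t^{(4+\delta')a}+Ct^{2a}\log t-Ct^\veps/2\bigr)$, and choosing $\delta>\delta'$ so that the factor $C_2$ and the correction $Ct^{2a}\log t$ are absorbed into $t^{(4+\delta)a}$ (possible since $t^{(4+\delta)a}/t^{(4+\delta')a}\to\infty$ and $t^{(4+\delta)a}/(t^{2a}\log t)\to\infty$), and using $\veps'<\veps$ to split off $ct^{\veps'}$ from $Ct^\veps/2$, one arrives at the claimed bound.

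The main obstacle is the second-moment estimate $\E_x[V(S(t))^2]^{1/2}\le e^{Ct^{2a}\log t}$, which requires moments of order $\sim d^2$ of both $|S_j(t)|$ and $\eta_d$ to grow no faster than $e^{Cd^2\log t}$; this is the borderline of what the exponential moment assumption delivers for $d^2\ll t$, and it holds in the regime we need (it is comfortable in the range $a<1/4$ where Lemma~\ref{lem.lower.rough} actually supplies a usable lower bound on $V(x)$). The remainder of the argument is bookkeeping to verify that the constants produced by Lemma~\ref{lem.lower.rough} and by the moment bound can be absorbed into the exponent $t^{(4+\delta)a}$, which is possible precisely because $(4+\delta)a>2a$ for any $\delta>0$.
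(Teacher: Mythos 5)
Your proposal is correct and follows essentially the same route as the paper: H\"older/Cauchy--Schwarz to split off the probability $\pr_x(\tau>t,\nu>t^{1-\veps})$, Lemma~\ref{lem:repulsion} to make that factor $e^{-ct^{\veps}}$, a moment bound of order $e^{Ct^{2a}\log t}$ on the $V$ (resp.\ $\Delta$, via Lemma~\ref{lem:hoelder}) term, and Lemma~\ref{lem.lower.rough} to reinstate the factor $V(x)$. The only cosmetic difference is the pointwise upper bound on $V(S(t))$: you go through $V\le h$ from Proposition~\ref{superharmonic} with a direct estimate on $h$ and the exponential moments of $\zeta$, whereas the paper uses monotonicity of $V$ in the gaps together with Proposition~\ref{lem:asymptotics.v} at the shifted point $x^*_j=x_j+(j-1)t^{1/2-\delta}$; both yield the same $e^{Ct^{2a}\log t}$ contribution, which is then absorbed into $e^{t^{(4+\delta)a}}$ exactly as you describe.
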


\begin{proof}
 {
By H\"{o}lder's inequality, 
\begin{align}
\label{holder_split}
   & \E_x[V(S(t)); \tau > t, \nu_t > t^{1-\varepsilon}] \nonumber \\
    & \leq \E_x[V(S(t))^p; \tau > t]^{1/p}\pr_x(\tau > t, \nu_t > t^{1-\varepsilon})^{1/q}.
 \end{align}
 The function $V(x)$ is increasing in differences $(x_j - x_i)$ by Proposition 4(b) of~\cite{denisov_wachtel10}. The intuition is that $\pr_x(\tau > n) \sim \mathcal{X}V(x)n^{-d(d-1)/4}$ as $n \rightarrow \infty$ from~\cite{denisov_wachtel10}. As $\tau$ is increasing in differences $(x_j - x_i)$, then 
 it is possible to conclude that $V(x)$ is as well. Although \cite{denisov_wachtel10} considers discrete-time random walks, the argument can also be applied to continuous-time random walks.
 
 Choose $\delta' > 0$ such that $2a < 1/2 - \delta'$. For every $x \in W^d$, we set 
\[
x_j^* = x_j + (j-1) t^{1/2-\delta'}, \quad j = 1, \ldots, d.
\]
Since $x^* \in W_{t, \delta'}$, by Proposition~\ref{lem:asymptotics.v},  
\begin{equation}
\label{eqn:v_bound}
V(x) \leq V(x^*) \leq C\Delta(x^*) \leq C(x_d - x_1 + d t^{1/2-\delta'})^{d^2}.
\end{equation}
Therefore, for $a < 1/2$,
\begin{align*}
& (\E_x[V(S(t))^p; (S_d(t) - S_1(t)) \leq t^{\theta}, \tau > t])^{1/p} \\
& \leq C (\E[(S_d(t) - S_1(t) + dt^{1/2-\delta'})^{pd^2}; 0 \leq (S_d(j) - S_1(j)) \leq t^{\theta}])^{1/p}\\
& \leq Ct^{\theta d^2}.
\end{align*}
Then,
\begin{align*}
& \E_x[V(S(t))^p; (S_d(t) - S_1(t)) > t^{\theta}, \tau > t] \\
& \leq C \E[(S_d(t) - S_1(t) + dt^{1/2-\delta'})^{pd^2}; (S_d(t) - S_1(t)) > t^{\theta}] \\
 & \leq C\exp(pd^3 t^{1/2 - \delta'-\theta}) \E[(S_d(t) - S_1(t))^{pd^2};  (S_d(t) - S_1(t)) > t^{\theta}]
\end{align*}
where the final factor can be bounded as in Lemma \ref{lem:error_bounds1} using the bound $\lvert x_j \rvert \leq t^{\theta}/\log(t)$. 
 We now apply Lemma \ref{lem:repulsion}, taking $\veps' = \delta$, to the second factor in \eqref{holder_split} followed by Lemma \ref{lem.lower.rough}
 to establish that
 \begin{align*}
     \E_x[V(S(t)); \tau > t, \nu_t > t^{1-\varepsilon}]    & \leq Ct^{\theta d^2}e^{-ct^{\varepsilon'}}\\
    & \leq CV(x)e^{t^{(4+\delta)a} - ct^{\varepsilon'}}.
\end{align*}
The second statement follows by a similar argument.
}
\end{proof}

For $i = 1, \ldots, d$, let $M_i(t) = \sup_{0 \leq s \leq t} \lvert S_i(s) - S_i(0) \rvert$ and let $\widehat{M}(t) = \sup_{1 \leq i \leq d} M_i(t)$.
\begin{lemma}
\label{lem:error_bounds3}
Let $a < 1/4$, $\delta > 0$, $\veps \in (0, 1)$, $\kappa > 1/2 - \veps/2$ and $\theta > \max(1, 2\kappa - 1 + \veps)$. There exists $t_0 > 0$ such that uniformly in $d(t) \leq t^a$, $t > t_0$ and $x \in W^d$,
with $\lvert x_d - x_1 \rvert \leq t^{\theta}/\log(t)$,  
\[
 \E_x[V(S(\nu_t)); \tau > \nu_t, \nu_t \leq t^{1-\veps}, \widehat{M}(\nu_t) > t^{\kappa}] \leq CV(x) e^{t^{(4+\delta)a} - t^{2\kappa-1+\veps}}.
\]
\end{lemma}
\begin{proof}
Firstly, 
\begin{align*}
   & \E[V(S(\nu_t)); \tau > \nu_t, \nu_t \leq t^{1-\veps}, \widehat{M}(\nu_t) > t^{\kappa}]\\
   & \leq \sum_{i=1}^d \E[V(S(\nu_t)); \tau > \nu_t, \nu_t \leq t^{1-\veps}, 
    M_i(\nu_t) > t^{\kappa}].
\end{align*}
For each $i = 1, \ldots, d$, by the harmonicity of $V$,
\begin{align*}
& \E_x[V(S(\nu_t)); \tau > \nu_t, \nu_t \leq t^{1-\varepsilon}, M_i(\nu_t) > t^{\kappa}]\\
& \leq \E_x[V(S(\nu_t \wedge t^{1-\veps})); \tau > \nu_t \wedge t^{1-\veps}, 
M_i(\nu_t \wedge t^{1-\veps}) > t^{\kappa}] \\
& = \E_x[V(S(t^{1-\veps})); \tau > t^{1-\veps}, 
M_i(\nu_t \wedge t^{1-\veps}) > t^{\kappa}]\\
& \leq \E_x[V(S(t^{1-\varepsilon})); \tau > t^{1-\varepsilon}, M_i(t^{1-\varepsilon}) > t^{\kappa}].
%& \leq \E_x[h(S(n^{1-\varepsilon})); \tau > n^{1-\varepsilon}, M_d(n^{1-\varepsilon}) > \gamma n^{1/2-a/6}]
\end{align*}
For $\delta' > 0$ satisfying $2a < 1/2 - \delta'$ we use~\eqref{eqn:v_bound} to obtain that 
\begin{align*}
& \E_x[V(S(t^{1-\varepsilon})); \tau > t^{1-\varepsilon}, \max_{j \leq t^{1-\varepsilon}}(S_d(j) - S_1(j)) > t^{\theta}] \\
& \leq C \E_x\bigg[\max_{j \leq t^{1-\varepsilon}}(S_d(j) - S_1(j) + d t^{1/2-\delta'})^{d^2};  \max_{j \leq t^{1-\varepsilon}}(S_d(j) - S_1(j)) > t^{\theta}\bigg] \\
& \leq C\exp(d^3 t^{1/2 - \delta'-\theta}) \E_x\bigg[\max_{j \leq t^{1-\varepsilon}}(S_d(j) - S_1(j))^{d^2};  
\max_{j \leq t^{1-\varepsilon}}(S_d(j) - S_1(j)) > t^{\theta}\bigg].
\end{align*}
The random walk $S_d(j) - S_1(j)$ is symmetric so 
\[
\pr_{x_d - x_1}\bigg(\max_{j \leq t^{1-\varepsilon}} (S_d(j) - S_1(j)) \geq x\bigg) \leq 2 \pr_{x_d - x_1}( (S_d(t^{1-\varepsilon}) - S_1(t^{1-\varepsilon})) \geq x)
\]
and for every $u \geq t$ we have $\pr_{x_d - x_1}(S_d(t^{1-\varepsilon}) - S_1(t^{1-\varepsilon}) \geq u) \leq e^{-ru}$ 
for some $r > 0$ and all $\lvert x_d - x_1 \rvert \leq t^{\theta}/\log(t)$. 
Therefore, 
\[
 \E_x\bigg[\max_{j \leq t^{1-\varepsilon}}(S_d(j) - S_1(j))^{d^2};  
\max_{j \leq t^{1-\varepsilon}}(S_d(j) - S_1(j)) > t^{\theta}\bigg] \leq Ct^{\theta d^2} e^{-rt^{\theta}}.
\]
As a result, 
\begin{align}
\label{eqn_Ec}
 & \E_x[V(S(t^{1-\varepsilon})); \tau > t^{1-\varepsilon}, \max_{j \leq t^{1-\varepsilon}}(S_d(j) - S_1(j)) > t^{\theta}] \nonumber \\
& \leq C \exp(d^3 t^{1/2 - \delta'-\theta}) t^{\theta d^2} e^{-rt^{\theta}} \rightarrow 0
\end{align}
for $a < 1/2$.
We next analyse the expectation over the event 
$\mathcal E_t = \{\max_{j \leq t^{1-\varepsilon}} (S_d(j) - S_1(j)) \leq t^{\theta}\}$.
On this set we shall use the bound $V(x) \leq h(x)$.
%\begin{align*}
%& \E_x[V(S(n^{1-\varepsilon})); \tau > n^{1-\varepsilon}, M_d(n^{1-\varepsilon}) > \gamma n^{1/2-a/6}, \mathcal E_n] \\
%& \leq \E_x[h(S(n^{1-\varepsilon})); \tau > n^{1-\varepsilon}, M_d(n^{1-\varepsilon}) > \gamma n^{1/2-a/6}, \mathcal E_n]
%\end{align*}
We first estimate that for some $r_1 > 0$,
\begin{align}
\label{eqn_eta}
&\E\bigg[\prod_{i < j} (S_j(t^{1-\veps}) - S_i(t^{1-\veps}) + \eta_{j} - \eta_i); \eta_d > t^{\theta}, \mathcal E_t \bigg] \nonumber\\
& \qquad \leq \E[(t^{\theta} + \eta_d)^{d^2}; \eta_d > t^{\theta}] 
 \leq C (2t)^{\theta d^2} e^{-r_1 t^{\theta}} \rightarrow 0 
\end{align}
since $a < 1/2$.
If $\eta_d \leq t^{\theta}$ and $S_d(t^{1-\veps}) - S_1(t^{1-\veps}) \leq t^{\theta}$ then 
\[
h(S(t^{1-\veps})) \leq (2t)^{\theta d} h_{[i]}(S_{[i]}(t^{1-\veps})).
\]
%\[
%\prod_{1 \leq i < j \leq d}(S_j(n^{1-\veps}) - S_i(n^{1-\veps}) + \eta_{j+1} - \eta_i)
%\leq (2n)^d \prod_{1 \leq i < j \leq d-1} S_j(n^{1-\veps}) - S_i(n^{1-\veps}) + \eta_{j+1} - \eta_i).
%\]
As a result, we have 
\begin{align*}
& \E_x[h(S(t^{1-\veps}); \tau > t^{1-\veps}, M_i(t^{1-\veps}) > t^{\kappa}, \mathcal E_t, \eta_d \leq t^{\theta}]\\
%\prod_{i< j} (S_j(n^{1-\veps}) - S_i(n^{1-\veps}) + \eta_{j+1} - \eta_i); \eta_d \leq n]; \tau > n^{1-\veps}, E_n, M_d %> n^{1/2 - a/6}] \\
& \leq (2t)^{\theta d} \pr_{x_i}(M_i(t^{1-\veps}) > t^{\kappa}) \E_{x_{[i]}}[h_{[i]}(S_{[i]}(t^{1-\veps})); \tau_{[i]} > t^{1-\veps}] \\
& \leq h_{[i]}(x_{[i]}) (2t)^{\theta d} \pr_{x_i}(M_i(t^{1-\veps}) > t^{\kappa}). 
\end{align*}
By an exponential bound for $M_i$, for some $r_2 > 0$,
%and all $\lvert x_d \rvert \leq t^{\theta},$
\[
\pr_{x_i}(M_i(t^{1-\veps}) > t^{\kappa}) \leq \exp(-r_2 t^{2\kappa - 1+\veps}).
\]
The same exponent holds for all $i = 1, \ldots, d.$
As a result we have 
\begin{multline*}
\E_x[V(S(t^{1-\veps})); \tau > t^{1-\veps}, M_i(t^{1-\veps}) > t^{\kappa}, \mathcal E_t, \eta_d \leq t]\\
\leq C  h_{[i]}(x_{[i]}) e^{\theta t^a \log(2t) - t^{2\kappa-1+\veps}}.
\end{multline*}
%This requires $4/3 a < \veps$.
Then, in combination with \eqref{eqn_eta},
\begin{multline*}
 \E_x[V(S(\nu_t)); \tau > \nu_t, \nu_t \leq t^{1-\veps}, M_i(\nu_t) > t^{\kappa}, \mathcal E_t]\\
 \leq Ch(x) e^{\theta t^a \log(2t) - t^{2\kappa-1+\veps}}
 + C(2t)^{\theta d^2} e^{-r_1 t^{\theta}}.
\end{multline*}
We use that $h(x) \leq C (2t)^{\theta d^2}$ when $\lvert x_d -x_1\rvert \leq t^{\theta}/\log(t)$ which follows from 
\begin{align*}
h(x) & = \E\big[\prod_{i< j}(x_j - x_i+\eta_j - \eta_i); \eta_d \leq t^{\theta}\big]
+ \E\big[\prod_{i< j}(x_j - x_i+\eta_j - \eta_i); \eta_d > t^{\theta}\big]\\
& \leq (2t)^{\theta d^2} + \E[(t^{\theta} + \eta_d)^{d^2}; \eta_d > t^{\theta}]
\end{align*}
where the second term can be bounded as in \eqref{eqn_eta}. 
We combine this with the bound \eqref{eqn_Ec} on the event $\mathcal{E}_t^c.$
Finally, we use that
$V(x) \geq C_1 \exp(-cd^{4+\delta})$ by Lemma \ref{lem.lower.rough}.
\end{proof}

\begin{lemma}
\label{lem:brownian_approx}
Let $\theta > 0$. Uniformly over all $y, z \in W^d$ satisfying $\lvert z_j - y_j \rvert \leq\theta$
for $j = 1, \ldots, d$ we have for uniformly continuous $f:C(k, [-L, L]) \rightarrow [0, 1]$, 
\[
\E_{y}^{(\Delta)}[f(B(s): 0 \leq s)] - \E_z^{(\Delta)}[f(B(s): 0 \leq s)] \leq C \theta.
\]
\end{lemma}

%We now use a monotonicity property of Dyson Brownian motion in order to approximate
%$\E_{S(\nu_n)}^{(\Delta)}[f(B^n)]$ uniformly over initial conditions satisfying 
%$-\theta_n n^{1/2-\epsilon/2} \leq S_1(\nu_n) \leq \ldots \leq S_d(\nu_n) \leq \theta_n n^{1/2-\epsilon/2}$. 

We prove this statement by using the following monotonicity property of non-intersecting Brownian motions. This comes from the representation of non-intersecting Brownian motions as stochastic differential equations, see \cite{grabiner}.
We use Dyson Brownian motion to refer to non-intersecting Brownian motions when we use the SDE representation. Note that Dyson Brownian motion has a strong solution, for example see~\cite{anderson_guionnet_zeitouni_09}.
This representation allows us to couple Dyson Brownian motion with different initial conditions run with the same Brownian motion. In particular,
let $(Y_1(0), \ldots, Y_d(0)) = (y_1, \ldots, y_d) \in W^d$ and $(Z_1(0), \ldots, Z_d(0)) = (z_1, \ldots, z_d) \in W^d$. 
For $j = 1, \ldots, d$ let 
\begin{align*}
dY_j(t) = \sum_{1 \leq i \leq d, i \neq j} \frac{dt}{Y_j(t) - Y_i(t)}
+ dB_j(t) \\
dZ_j(t) = \sum_{1 \leq i \leq d, i \neq j} \frac{dt}{Z_j(t) - Z_i(t)}
+ dB_j(t)
\end{align*}
where we use the same Brownian motions $B_1, \ldots, B_d$ in both systems. 
The strong solution can be extended to the case when the initial conditions are in the closure of $W^d$, see Proposition 4.3.5 of~\cite{anderson_guionnet_zeitouni_09}.

We use Lemma 2.8 of~\cite{aggarwal_huang} which we state 
here. Part (i) also appears as Lemma 4.3.6 of \cite{anderson_guionnet_zeitouni_09} (with strict rather than weak inequalities for the initial condition).
\begin{lemma}[Aggarwal, Huang~\cite{aggarwal_huang}]
\label{dyson_bm_monotonic}
Suppose $(Y_1, \ldots, Y_d)$ and $(Z_1, \ldots, Z_d)$
are coupled Dyson Brownian motions run with the same Brownian motions as described above started from $y_1 \leq \ldots \leq y_d$ and $z_1 \leq \ldots \leq z_d$ respectively.
\begin{enumerate}
    \item Suppose that $y_j \geq z_j$ for all $ j = 1, \ldots, d$. 
Then, for all $t \geq 0$ we have that $Y_j(t)\geq Z_j(t)$ for all $j = 1, \ldots, d$.
    \item Suppose that $y_j - y_i \geq z_j - z_i$ for all $1 \leq i < j \leq d$. 
Then, for all $t \geq 0$ we have that $Y_j(t) - Y_i(t) \geq Z_j(t) - Z_i(t)$ for all $1 \leq i < j \leq d.$
\end{enumerate}
\end{lemma}

%Suppose $(X_1, \ldots, X_d)$ and $(Y_1, \ldots, Y_d)$
%satisfy Dyson Brownian motion run with the same Brownian motion started from the initial conditions 
%$0$ and $S(\nu_n)$. 
%%$(-\theta_n n^{1/2 - \epsilon/2}, \ldots, \theta_n n^{1/2 - \epsilon/2}), S(\nu_n), 
%%(\theta_n n^{1/2 - \epsilon/2}, \ldots, \theta_n n^{1/2 - \epsilon/2})$. 
%Recall that Dyson Brownian motion has a strong solution. 
%The particles are ordered as $X_1 \leq X_2 \leq \ldots \leq X_d$ and $Y_1 \leq Y_2 \leq \ldots \leq Y_d$ and satisfy 
%the system of SDEs for $1 \leq i \leq d$, 
%\begin{align*}
%dX_i(t) & = \sum_{k \neq i} \frac{1}{X_i(t) - X_k(t)} dt + dB_i(t)\\
%dY_i(t) & = \sum_{k \neq i} \frac{1}{Y_i(t) - Y_k(t)} dt + dB_i(t)
%\end{align*}
%Note that Dyson Brownian has a strong solution. 

\begin{proof}[Proof of Lemma~\ref{lem:brownian_approx}]
We have that $z_j-\theta  \leq y_j \leq z_j+\theta$ for $j = 1, \ldots, d.$
%Therefore, we can construct three coupled systems of Dyson Brownian motions started with all particles at $-\theta$, particles at $y$
%and with all particles started from $\theta.$
%Lemma~\ref{dyson_bm_monotonic} shows that the spacings remain larger for all time when particles are started at $y$ than for the other two systems. The initial extremal particles are also ordered and hence the particle positions are ordered in the following way. 
Let $Z$ and $Y$ be coupled systems of Dyson Brownian motion started at $z$ and $y$ respectively and run with the same Brownian motions. We can also consider the vertical translations given by $Z$ run with the same Brownian motions started from $(z_1 - \theta, \ldots, z_d - \theta)$ and $(z_1 + \theta, \ldots, z_d + \theta)$ respectively.
We have
%\[
%Z_j(0) - \theta  \leq Y_j(0) \leq Z_j(0) + \theta .
%\]
%and by Lemma~\ref{dyson_bm_monotonic}
for $j = 1, \ldots, d$ and all $s \geq 0$ by Lemma \ref{dyson_bm_monotonic} part (i),
\[
Z_j(s) - \theta  \leq Y_j(s) \leq Z_j(s) + \theta.
\]
%Alternatively, this could have been established by combining part (ii) of Lemma~\ref{dyson_bm_monotonic} with the fact that the initial extremal particles are ordered.
As 
$f$ is uniformly continuous, 
\[
\E_{y}[f(B)] - \E_z[f(B)] \leq C \theta. \qedhere
\]
\end{proof}
Alternatively, in the case $z = 0$, Lemma~\ref{lem:brownian_approx} could have been established by combining part (ii) of Lemma~\ref{dyson_bm_monotonic} with the fact that the initial extremal particles are ordered.

\subsection{Convergence to the Airy line ensemble}
\label{sec:proof_thm}

\begin{proof}[Proof of Theorem~\ref{thm:airy}]
To establish weak convergence, it is sufficient to prove convergence of the expectation of any uniformly continuous measurable functional $f: D(k, [-L, L]) \rightarrow [0, 1].$

For $i = 1, \ldots, d$ let $\gamma = 1/2-a/6$ and
\[
X_{i}^T(t) = T^{-\gamma}\left( S_{d-i+1}(T+2T^{1-a/3} t) - 2 T^{1/2+a/2} 
- 2T^{1/2 + a/6} t\right).
\]
and 
\[
Y^T_i(t) = T^{-\gamma}\left( B_{d-i+1}(T+2T^{1-a/3} t) - 2 T^{1/2+a/2} 
- 2T^{1/2 + a/6} t\right).
\]

% \[
% X^T_i(t) = 2^{-1/2} T^{-\gamma} \left( S_i(T + 2T^{1-a/3}t) - 2T^{a/2+1/2}\right)
% \]
% and 
% \[
% Y^T_i(t) = 2^{-1/2} T^{-\gamma}  \left( B_i(T+ 2T^{1-a/3}t) - 2T^{a/2+1/2}\right).
% \]

%Let $\ell^T_L = T - 2T^{1-a/3} L$ and 
%Let $r^T_L = T + 2T^{1-a/3}L$,
%let $\rho_T := \inf\{t \geq r^T_L : S(t) \in W_{T, \veps}\}$
%and 
%$\rho_n := \bar{\\rho}_n - r_n$.
%Suppose that $x \in W_{n, \epsilon}$,
%\begin{align*}
%\E_x^{(V)}(f(S(k) : 0 \leq k \leq n) & = \E_x \left[ f(S(k) : 0 \leq k \leq n) \frac{V(S(n))}{V(x)} 1_{\{\tau > n\}} \right] \\
%& = \E_x \left[ f(S(k) : 0 \leq k \leq n) \frac{V(S(n+\gamma_n))}{V(x)} 1_{\{\tau > n + \gamma_n\}} \right]
%\end{align*}
%and let $\mathcal{A}_{2T} = \{S(2T) \in W_{T, \veps}\}$.
Suppose $\beta > 0$. We will introduce further conditions on $\beta$ later in the proof.
Let
$\mathcal C_{2T}:=\cap_{j=1}^d \mathcal C_{j,2T}$ where  
\[
\mathcal C_{j,2T}:=\left\{\sup_{0\le t\le 2T}|B_j(t)-S_j(t)|\le T^{\beta}\right\}.
\]
%Here $S'$ are the interpolated continuous processes.

We first consider the case when $x \in W_{T, \veps}$.
Then,
\begin{align*}
\E_{x}^{(V)} [f(X^T)] &=\frac{1}{V(x)}
\E_{x} [f(X^T)V(S(2T));\tau>2T, \C_{2T}]\\
&\hspace{1cm}+
\frac{1}{V(x)} \E_{x} [f(X^T)V(S(2T));\tau>2T, \C_{2T}^c]. 
\end{align*}
By Lemma~\ref{lem:error_bounds1} the second sum is $o(1)$ for $\frac \beta 2 >a$.
Let $\mathcal{A}_{2T} = \{S(2T) \in W_{T, \veps}\}$ and note that
\begin{align*}
    &\E_{x} [f(X^T)V(S(2T));\tau>2T, \C_{2T}] 
    \geq \E_{x} [f(X^T)V(S(2T));\tau>2T, \A_{2T}, \C_{2T}]. 
\end{align*}
%Next, the first summand, by harmonicity of $V$, can be 
%written as 
%\begin{align*}
%    &\E_{x} [f(X^T)V(S(2T));\tau>2T, \C_{2T}] \\
%    &=\E_{x} [f(X^T)V(S(\rho_T\wedge 2T));\tau>\rho_T\wedge 2T, \C_{2T}]\\
%    &=
%    \E_{x} [f(X^T)V(S(\rho_T));\tau>\rho_T, \A_{2T}, \C_{2T}]
%    \\
%    &\hspace{1cm}+\E_{x} [f(X^T)V(S( 2T));\tau>2T, \A^c_{2T}, \C_{2T}]. 
%\end{align*}
%By Lemma~\ref{lem:error_bounds2} the second term is again $o(1)$ for %$4a<\varepsilon$.
%As a result, 
%\begin{align}
%\label{eqn_f_AC}
%\E_{x}^{(V)} [f(X^T)] \geq 
%\E_{x} [f(X^T)V(S(2T));\tau>2T, \A_{2T}, \C_{2T}]
% +o(1).    
%\end{align}
%Then 
%\[
%\E_{x}^{(V)} [f(X^T)] = \E_{x}^{(V)}%[f(X^T); \A_{2T}, \C_{2T}] + E_1 + E_2.  \]
%where 
%\begin{align*}
%E_1 & \leq \E_{x}^{(V)} [f(X^T); \A_{2T}^c], \\
%E_2 & \leq \E_{x}^{(V)} [f(X^T); \C_{2T}^c].
%\end{align*}
%By the first equations in Lemma~\ref{lem:error_bounds1} and~\ref{lem:error_bounds2} we have that 
%$E_1 \rightarrow 0$ and $E_2 \rightarrow 0$ as $T \rightarrow \infty$ for $ \frac{\beta}{2} > a$ and $\veps > 4a$. 

The function $f:D(k, [-L, L]) \rightarrow [0, 1]$ is uniformly continuous so for some $C > 0$
on the event $\C_{2T}$,
\begin{equation}
\label{f_eqn}
\lvert f(X^T) - f(Y^T) \rvert \leq C \sup_{d-k+1 \leq j \leq d} \sup_{-L \leq t \leq L} \lvert X^T_j(t) - Y^T_j(t) \rvert \leq CT^{\beta - \gamma}.
\end{equation}
Therefore, using this bound and martingale properties
\begin{align*}
& \frac{1}{V(x)} \E_{x}\left[(f(X^T) - f(Y^T)) V(S(2T)); \tau > 2T, \A_{2T}, \C_{2T} \right]\\
& \leq C T^{\beta -\gamma} \frac{1}{V(x)} \E_{x}\left[ V(S(2T)); \tau > 2T, \A_{2T}, \C_{2T} \right] \\
& \leq CT^{\beta - \gamma} \rightarrow 0
\end{align*}
as long as $\beta < \gamma$.
Let \[
M_1 = \frac{1}{V(x)} \E_{x}[ f(Y^T) V(S(2T))); \tau >  2T,  \A_{2T}, \C_{2T}].
\]
%From the above
%\begin{align*}
% \E_{x}^{(V)}( f(X^n); \A_n, \C_n) = M_1
% + O(n^{a/6+\beta -1/2}).
%\end{align*}
On the event $\C_{2T}$ the following inequalities hold: for all $0 \leq t \leq 2T,$
\begin{equation}
\label{rw_bm_diff}
\lvert (S_j(t) - S_{j-1}(t)) - (B_j(t) - B_{j-1}(t)) \rvert \leq 2T^{\beta}.
\end{equation}
%\[
%B_j(k) - B_{j-1}(k) - 2n^{a/6-1/2+\beta} \leq S_j(k) - S_{j-1}(k)
%\leq B_j(k) - B_{j-1}(k) + 2n^{a/6 -1/2 + \beta}.
%\]
Let
\begin{equation}
    \label{x_plus}
    x^{-} = (x_1, x_2 - 2T^{\beta}, \ldots, x_d - 2(d-1)T^{\beta}).
\end{equation}
Note, for sufficiently large $T$, then $x^{-} \in W_{T, \veps}$ if $\beta < 1/2-\veps$ since $x \in W_{T, \veps}$.
%Therefore, 
%\begin{equation*}
%M_1  = \frac{1}{V(x)} \E_{x}[ f(Y^T) V(S(\rho_T)); \tau > \rho_T,  \A_{2T}, %\C_{2T}] 
%\end{equation*}
We now note that we can construct a coupled process $Y^T_{-}$ driven by the same Brownian motions, where $Y^T_-$ has the same definition as $Y^T$ but with the Brownian motions started from the initial condition $x^-$ instead of $x$. Under the law $\E_x$, note that $Y^T_{-}$ involves Brownian motions started from $x^-$.
The coupled systems $Y^T$ and $Y^T_-$ satisfy an analogue of \eqref{f_eqn} so that by following the argument immediately after \eqref{f_eqn} we can establish
\[
\frac{1}{V(x)} \E_x[ \lvert f(Y^T)-f(Y^T_{-})\rvert V(S(2T)); \tau > 2T,  \A_{2T}, \C_{2T}] \rightarrow 0.
\]
Therefore,
\begin{align*}
    M_1 
& =
 \frac{(1+o(1))}{V(x)} \E_x[ f(Y^T_{-}) V(S(2T)); \tau > 2T,  \A_{2T}, \C_{2T}].
\end{align*}
Let $\tau^{\text{bm}}_{-}$ denote the first exit time from the Weyl chamber for the Brownian motion started from $x^-$. Due to the coupling, we have that
$\{\tau^{\text{bm}}_{-} > 2T\} \subset\{\tau > 2T\}$.
As a result, 
\begin{equation*}
M_1  \geq \frac{(1+o(1))}{V(x)} \E_x[ f(Y^T_{-}) V(S(2T)); \tau^{\text{bm}}_- > 2T,  \A_{2T}, \C_{2T}]. 
\end{equation*}
By definition, when $\A_{2T}$ holds, we have $S(2T) \in W_{T, \veps}$ and $V \sim \Delta$ on $W_{T, \veps}$ for $2a < 1/2-\veps$ by Proposition~\ref{lem:asymptotics.v}.
Furthermore, $\Delta(S(2T))=\Delta(B_-(2T))(1+o(1))$ 
for $2a + \beta < 1/2 - \veps$ by \eqref{rw_bm_diff}, where the Brownian motions $B_-$ are started from $x^-$. Therefore,
\begin{align*}
M_1 & \geq \frac{(1+o(1))}{V(x)} \E_{x^{-}}[ f(Y^T) \Delta(B(2T))); \tau^{\text{bm}} >  2T,  \A_{2T}, \C_{2T}].
\end{align*} 
Let $\mathcal{A}_{2T}^{\text{bm}} = \{\frac{1}{2}B(2T) \in W_{T, \veps}\}$.
On the event $\mathcal{C}_{2T}$, we have that $\mathcal{A}_{2T}^{\text{bm}} \subset \mathcal{A}_{2T}$ for sufficiently large $T$, due to the factor $1/2$ appearing in the definition of $\mathcal{A}_{2T}^{\text{bm}}$ and that $\beta < 1/2 - \veps$.
%\begin{align*}
%M_1
%& \leq \frac{(1+o(1))\Delta(x^+)}{V(x)} \E_{x^{+}}( f(B^n) \Delta(B(r_n+\gamma_n))); \tau^{\text{bm}} > r_n + \gamma_n,  \A_n, \C_n).
%\end{align*}
%The martingale property of $\Delta$ for Brownian motion killed when the co-ordinates become disordered can be applied to give
Therefore,
\begin{align*}
M_1 & \geq \frac{(1+o(1))}{V(x)} \E_{x^{-}}[ f(Y^T) \Delta(B(2T)); \tau^{\text{bm}} > 2T,  \A_{2T}^{\text{bm}}, \C_{2T}] \\
& = (1+o(1))\frac{\Delta(x^-)}{V(x)}  \E_{x^{-}}\left[f(Y^T) \frac{\Delta(B(2T))}{\Delta(x^-)}; \tau^{\text{bm}} > 2T,  \A_{2T}^{\text{bm}}, \C_{2T}\right].
%& = (1+o(1)) \frac{\Delta(x^+)}{V(x)} \E_{x^{+}}^{(\Delta)}( f(B^n);  \A_n, \C_n).
\end{align*}
Using $V(x) = \Delta(x)(1+o(1))$ for $2a < 1/2 -\veps$ and $\Delta(x) =  \Delta(x^-)(1+o(1))$ 
for $2a+\beta < 1/2 - \veps,$
\begin{align*}
M_1 & \geq (1+o(1)) \E_{x^{-}}\left[ f(Y^T) \frac{\Delta(B(2T))}{\Delta(x^-)} ; \tau^{\text{bm}} > 2T, \A_{2T}^{\text{bm}}, \C_{2T}\right] \\
& = (1+o(1))  \E_{x^{-}}\left[ f(Y^T) \frac{\Delta(B(2T))}{\Delta(x^-)}; \tau^{\text{bm}} > 2T, \A_{2T}^{\text{bm}}\right].
\end{align*}
%where in the final line we use again that $V(x) = \Delta(x^+)(1+o(1))$ for $2a < \epsilon -1/2.$
To obtain the second line, we use part (ii) of Lemma~\ref{lem:error_bounds1} 
for $\frac{\beta}{2} > a$. 
We now use Corollary 1.5 of~\cite{ben_arous}, which proves that in the Gaussian Unitary Ensemble of size $d \times d$, the minimal gap between eigenvalues multiplied by $d^{4/3}$ converges to a limiting distribution as $d$ grows to infinity. Hence, with probability tending to one, Dyson Brownian motion started from zero is in $W_{T, \veps}$ at time $2T$ for any 
$\veps > 5a/6.$
This property can then be extended to Dyson Brownian motion started from 
$x^{-}$ using the monotonicity of gaps in part (ii) of Lemma \ref{dyson_bm_monotonic}.
Hence, for $\veps > 5a/6$,
\[
\E_{x^{-}}\left[ f(Y^T) \frac{\Delta(B(2T))}{\Delta(x^-)}; \tau^{\text{bm}} > 2T, (\A_{2T}^{\text{bm}})^c\right] \rightarrow 0, \quad T \rightarrow \infty.
\]
%\begin{align*}
%E_3 & := \E_{x^{+}}^{(\Delta)}[ f(Y^T);  \A_{2T}^c ]   = o(1) \\
%  E_4& :=\E_{x^{+}}^{(\Delta)}[ f(Y^T);  \C_{2T}^c ]  = o(1). 
% \end{align*} 
Therefore, now using Lemma~\ref{lem:brownian_approx},
 \begin{align*}
  M_1 & \geq (1+o(1))\E_{x^{-}}\left[ f(Y^T) \frac{\Delta(B(2T))}{\Delta(x^-)}; \tau^{\text{bm}} > 2T\right] \\
 % & = \E_{x^{-}}^{(\Delta)}[ f(Y^T)]\\
 & = (1+o(1))  \E_{0}^{(\Delta)}[ f(Y^T)]
 \end{align*}
 for $x \in W_{T, \veps}$ and $\lvert x_1 \rvert + \lvert x_d \rvert = o(T^{\gamma}).$
% for $1/2-\veps < \gamma$.
% Let
%\[
%x^- = (x_1, x_2 + 2n^{a/6+\beta -1/2}, \ldots, x_d + 2(d-1)n^{a/6 + \beta -1/2}).
%\]
Therefore 
\[
\E^{(V)}_x[f(X^T)] \geq (1+o(1))  \E_{0}^{(\Delta)}[ f(Y^T)].
\]
Applying the same argument to the function $1-f$ gives 
\[
\E^{(V)}_x[(1-f)(X^T)] \geq (1+o(1))  \E_{0}^{(\Delta)}[ (1-f)(Y^T)]
\]
from which we deduce the equality 
\[
\E^{(V)}_x[f(X^T)] = (1+o(1))  \E_{0}^{(\Delta)}[ f(Y^T)].
\]

% In a similar manner we can show that 
%  \[
% M_1 \geq (1+o(1))  \E_{x^{-}}^{(\Delta)}[ f(Y^T)]
% = (1+o(1))  \E_{0}^{(\Delta)}[ f(Y^T)].
% \]

% 
% \E_{x^{+}}^{(\Delta)}[ f(B^n)] & = \E_{x}^{(\Delta)}[ f(B^n)] + o(1).
% \end{align*}
%From our initial approximation we need to show that 
%\begin{align*}
%E_1 & = \E_{x}^{(V)} (f(X^n); \A_n^c, \C_n)  = o(1) \\
%E_2 & = \E_{x}^{(V)} (f(X^n); \C_n^c)  = o(1). 
%\end{align*}
%The events including $\C_n$ are similar to calculations in Section XX. 
%%\begin{align*}
%%\E_{x_n^{+}}^{(\Delta)}[ f(B^n);  C_n^c ] & \leq \E_{x_n^{+}}^{(\Delta)}[ f(B^n)] \pr_0 [C_n^c ]\\
%%& \leq C  \pr_0 [C_n^c ] \rightarrow 0.
%%\end{align*}
%\begin{align*}
%\E_{x}^{(V)} (f(X^n); \A_n^c) & \leq C \pr_{x}^{(V)} (\A_n^c) \rightarrow 0.
%\end{align*}
%To show that  
%\[
%\E_{x^{+}}^{(\Delta)}[ f(B^n)] = \E_{x}^{(\Delta)}[ f(B^n)] + o(1).
%\]
We now consider the case when $x \notin W_{T, \veps}$.
Let 
\[
\nu_T := \inf\{t \geq 0: S(t) \in W_{T, \veps}\}.
\]
Then by the strong Markov property,
\begin{align*}
\E_x^{(V)}[f(X^T)] &= \E_x^{(V)}[ \E_{S(\nu_T)}^{(V)}[f(X^T(\cdot - \nu_T))]; \nu_T \leq T^{1-\veps}] \\
& \quad + 
\E_x^{(V)}[f(X^T); \nu_T \geq T^{1-\veps}].
\end{align*}
Here we are using the property that $f$ does not depend on the paths before time $T^{1-\veps}$ for sufficiently
large $T$.
By the first equation in Lemma~\ref{lem:error_bounds2} we have for $\veps > 4a$,
\[
 \E_x^{(V)}[f(X^T); \nu_T \geq T^{1-\epsilon}] \rightarrow 0, \quad T \rightarrow \infty.
\]
Let $M(t) = \max_{0 \leq s \leq t} S_d(s)$ and $I(t) = \min_{0 \leq s \leq t} S_1(s)$.
The first term can be split further for some $\delta > 0$ as
\begin{align*}
 &\E_x^{(V)}[\E_{S(\nu_T)}^{(V)}[f(X^T(\cdot - \nu_T))]; \nu_T \leq T^{1-\veps}]  \\
 &=  \E_x^{(V)}[\E_{S(\nu_T)}^{(V)}[f(X^T(\cdot - \nu_T))]; \nu_T \leq T^{1-\veps}, 
 \lvert M(\nu_T) \rvert + \lvert I(\nu_T) \rvert \leq T^{\gamma-\delta}]\\
 % I(\nu_n) \geq -\gamma n^{1/2 - a/6}]\\
 & \quad + E_5 + E_6,
 \end{align*}
 where 
 \begin{align*}
 E_5 & \leq
 \E_x^{(V)}[\E_{S(\nu_T)}^{(V)}[f(X^T(\cdot - \nu_T))]; \nu_T \leq T^{1-\epsilon}, \lvert M(\nu_T) \rvert >  T^{\gamma-\delta}/2]\\
E_6 & \leq  \E_x^{(V)}[\E_{S(\nu_T)}^{(V)}[f(X^T(\cdot - \nu_T))]; \nu_T \leq T^{1-\veps}, \lvert I(\nu_T) \rvert > T^{\gamma-\delta}/2].
\end{align*}
%We need to show
%\[
% \E_x[V(S(\nu_n)); \tau > \nu_n, \nu_n \leq n^{1-\epsilon}, M(\nu_n) > \theta_n n^{1/2-\epsilon/2}] \rightarrow 0.
%\]
Lemma~\ref{lem:error_bounds3} shows that $E_5 \rightarrow 0$ and 
%a symmetric argument shows that 
$E_6 \rightarrow 0$ for $2\gamma-1+\veps > 4a$ where $\delta > 0$ is chosen sufficiently small.

%Let 
%\[
%f(y, k, X^n) = f\left(\frac{y}{\sqrt{n}} 1_{\{t \leq k/n\}} + X^n(t) 1_{\{t > k/n\}}\right).
%\]
%On the event $\{x + S(\nu_n) \in dy, M(\nu_n) \leq \theta_n \sqrt{n}\},$
%\[
%f(y, k, X^n) - f(X^n) = o(1)
%\] 
%uniformly in $\lvert y \rvert \leq \theta_n \sqrt{n}$ and $k \leq n^{1-\epsilon}$. 
%
%Therefore using the approximation in Section XX,
%\begin{align*}
%&\E_x^{(V)}[f(X^n); \nu_n \leq n^{1-\epsilon}; M(\nu_n) \leq \theta_n \sqrt{n}]\\
%& = \E_x^{(V)}\left[\E^{(V)}_{S(\nu_n)} \left[f\left(\frac{S(\nu_n)}{\sqrt{n}} 1_{\{t \leq k/n\}} + X^n(t-k/n) 1_{\{t > k/n\}}\right)\right]; 
%\nu_n \leq n^{1-\epsilon}, M_{\nu_n} \leq \theta_n \sqrt{n}\right] \\
%& = \E_x^{(V)}\left[\E^{(V)}_{S(\nu_n)} \left[f\left(\frac{S(\nu_n)}{\sqrt{n}} 1_{\{t \leq k/n\}} + B^n(t-k/n) 1_{\{t > k/n\}}\right)\right]; 
%\nu_n \leq n^{1-\epsilon}, M_{\nu_n} \leq \theta_n \sqrt{n}\right] 
%\end{align*}
%Now 
%\[
%\E\left[f\left(\frac{y}{\sqrt{n}} 1_{\{t \leq k/n\}} + B^n(t-k/n) 1_{\{t > k/n\}}\right)\right] 
%= \E_0^{(\Delta)}[f(B^n)](1+o(1))
%\]
%by results on the entrance law of Dyson Brownian motion.
As a result, 
\begin{align*}
& \E_x^{(V)}[f(X^T)]\\
%& \E_x^{(V)}[f(X^n); \nu_n \leq n^{1-\epsilon},  \lvert M(\nu_n) + I(\nu_n) \rvert \leq \gamma n^{1/2 - a/6}] \\
& =  \E_x^{(V)}[\E_{S(\nu_T)}^{(V)}[f(X^T(\cdot - \nu_T))]; \nu_T \leq T^{1-\veps},  \lvert M(\nu_T) \rvert + \lvert I(\nu_T) \rvert \leq T^{\gamma-\delta}] +o(1)\\
& = (1+o(1)) \E_x^{(V)}[\E_{0}^{(\Delta)}[f(Y^T(\cdot - \nu_T))]; \nu_T \leq T^{1-\veps},  \lvert M(\nu_T) \rvert + \lvert I(\nu_T) \rvert \leq  T^{\gamma-\delta}]
%& = (1+o(1)) \E_x^{(V)}[\E_{0}^{(\Delta)}[f(B^n)]; \nu_n \leq n^{1-\epsilon}, M(\nu_n) \leq \theta_n 
%n^{1/2-\epsilon/2}, I(\nu_n) \geq -\theta_n n^{1/2-\epsilon/2}]\\
%& = (1+o(1))\E_{0}^{(\Delta)}[f(Y^T)],
\end{align*}
where in the second line we use the approximation of ordered random walks and non-intersecting Brownian motion for starting positions in $W_{T, \veps}$ satisfying the bound 
$\lvert S_d(\nu_T) \rvert + \lvert S_1(\nu_T) \rvert \leq T^{\gamma - \delta}$.
We now show that 
\begin{align}
\label{shift_eqn}
& \E_x^{(V)}[\E_{0}^{(\Delta)}[f(Y^T(\cdot - \nu_T))]; \nu_T \leq T^{1-\veps},  \lvert M(\nu_T) \rvert + \lvert I(\nu_T) \rvert \leq  T^{\gamma-\delta}]  \\
& = (1+o(1))
\E_x^{(V)}[\E_{0}^{(\Delta)}[f(Y^T)]; \nu_T \leq T^{1-\veps},  \lvert M(\nu_T) \rvert + \lvert I(\nu_T) \rvert \leq  T^{\gamma-\delta}].\nonumber
\end{align}
For $\gamma > 1/2 - \veps/2$ and on the event $\{\nu_T \leq T^{1-\veps}\}$ we can use uniform continuity of $f$ to obtain that there exists $0 < \delta' < \gamma - 1/2 + \veps/2$ such that
\begin{align*}
& \lvert f(Y^T(\cdot - \nu_T))- f(Y^T) \rvert \\
& \leq CT^{-\gamma} \sup_{(j, t, s) \in J}
\lvert B_{j}(T+2T^{1-a/3}t-s) - B_{j}(T+2T^{1-a/3}t) \rvert \\
& \leq CT^{-\gamma + 1/2-\veps/2 + \delta'} \rightarrow 0
\end{align*}
where $J = \{(j, t, s) : d-k+1 \leq j \leq d,  -L \leq t \leq L, 0 \leq s \leq T^{1-\veps}\}$.
This bound is the analogue of \eqref{f_eqn}. Hence, \eqref{shift_eqn} can be established by following the argument immediately after \eqref{f_eqn}.
Finally, we use again Lemma~\ref{lem:error_bounds2}
and Lemma~\ref{lem:error_bounds3} to obtain, 
\begin{align*}
& \E_x^{(V)}[\E_{0}^{(\Delta)}[f(Y^T)]; \nu_T \leq T^{1-\veps},  \lvert M(\nu_T) \rvert + \lvert I(\nu_T) \rvert \leq  T^{\gamma-\delta}] \\
& = (1+o(1))\E_{0}^{(\Delta)}[f(Y^T)].
\end{align*}

The conditions needed are $\frac{\beta}{2} > a$, $\veps > 4a$, $\beta < \gamma$, $2a < 1/2 - \veps$, $2a + \beta < 1/2 - \veps$
%$a+\beta-\gamma<1/2-\veps$ 
and $2\gamma - 1 + \veps > 4a$.
Here, $\gamma = 1/2 - a/6$ leads to these conditions being satisfiable if $a<\frac 3{50}$. 
Indeed in this case there exists $\beta>\frac{3}{25}$ such that 
when $\varepsilon = \frac{13}{50}$  all above inequalities are met. 
\end{proof}

\subsection{Law of large numbers}
%Let $\delta > 0$ and define
%\[
%X_T(f) = \sum_{j=1}^d f(T^{-1/2-a/2} S_j(Tt) : t \in [\delta, 1])
%\]
%and 
%\[
%Y_T(f) = \sum_{j=1}^d f(T^{-1/2-a/2} B_j(Tt) : t \in [\delta, 1]).
%\

The proof of Theorem~\ref{thm:brownian_approx} is the same as the proof of Theorem~\ref{thm:airy} with three minor differences. 
The function $f$ has a different form but satisfies an analogue of~\eqref{f_eqn}.
As $g$ and $f$ are uniformly continuous: for some $C > 0$
on the event $\C_{2T}$,
\begin{align*}
 \lvert g(d^{-1} X_T(f)) - g(d^{-1} Y_T(f)) \rvert  
& \leq C
\lvert d^{-1} X_T(f) - d^{-1} Y_T(f) \rvert \\
%& \leq C \sup_{1 \leq j \leq d} \sup_{0 \leq t \leq 1} \lvert X^T_j(t) - Y^n_j(t) \rvert \\
&\leq C \sup_{1 \leq j \leq d} \sup_{\delta \leq t \leq T} \lvert S_j(t) - B_j(t) \rvert \\
& \leq CT^{\beta - \gamma}. 
\end{align*}
Note that $f$ retains the property of not depending on the paths before time $T^{1-\veps}$ for sufficiently
large $T$ and this is the reason for introducing the parameter $\delta > 0$.
The scaling factor $\gamma = a/2 + 1/2$ leads to different conditions at the end of the proof of Theorem~\ref{thm:airy}. Therefore we require the condition $a < 1/16$.

\subsection{Fluctuations of linear statistics}
Again, the proof of Theorem~\ref{thm:brownian_approx_clt} is the same as the proof of Theorem~\ref{thm:airy} with minor differences.
Recall for $\delta > 0$
\begin{align*}
X_T(f) & = \sum_{i=1}^d f\left(T^{-1/2-a/2} S_i(Tt) : \delta \leq t \leq 1\right)\\
Y_T(f) & =  \sum_{i=1}^d f\left(T^{-1/2-a/2} B_i(Tt) : \delta \leq t \leq 1 \right).
\end{align*}
On the event $\C_{2T}$ we have 
\begin{align*}
& \lvert g\left(X_T(f) - \E^{(\Delta)} X_T(f)\right) - g\left(Y_T(f) - \E^{(V)} Y_T(f)\right) \rvert\\
& \leq C \lvert X_T(f) - Y_T(f) \rvert + C \lvert \E^{(\Delta)}  X_T(f) - \E^{(V)} Y_T(f) \rvert \\
& \leq CT^{a+\beta - \gamma}. 
\end{align*}
These extra factors of $T^a$ do not affect the critical conditions. The scaling factor $\gamma = a/2 + 1/2$ and so as in the case of Theorem~\ref{thm:brownian_approx} 
we  require $a < 1/16$.

%\begin{proof}
%\begin{align*}
%& d(X_j(t) - X_i(t) - Y_j(t) + Y_i(t)) \\
%& = \sum_{k \neq i} \frac{1}{X_j(t) - X_k(t)} dt - \sum_{k \neq i} \frac{1}{X_i(t) - X_k(t)} dt 
%- \sum_{k \neq i} \frac{1}{Y_j(t) - Y_k(t)} dt + \sum_{k \neq i} \frac{1}{Y_i(t) - Y_k(t)} dt \\
%& = \sum_{k \neq i} \frac{X_i(t) - X_j(t)}{(X_i(t) - X_k(t))(X_j(t) - X_k(t))} dt - 
%\sum_{k \neq i} \frac{Y_i(t) - Y_j(t)}{(Y_i(t) - Y_k(t))(Y_j(t) - Y_k(t))} dt.
%\end{align*}
%On the set $\{X_j(t) - X_i(t) = Y_j(t) - Y_i(t)\}$, then 
%\begin{align*}
%& \frac{d(X_j(t) - X_i(t) - Y_j(t) + Y_i(t))}{dt} \\
%& = (X_i(t) - X_j(t)) \left(  \sum_{k \neq i} \frac{1}{(X_i(t) - X_k(t))(X_j(t) - X_k(t))}  - 
% \frac{1}{(Y_i(t) - Y_k(t))(Y_j(t) - Y_k(t))}\right) \\
% & \geq 0
%\end{align*}
%if $X_j(t) - X_i(t) \geq Y_j(t) - Y_i(t)$. 
%This ensures that for all $t \geq 0$ we have that $X_j(t) - X_i(t) \geq Y_j(t) - Y_i(t)$ for all $1 \leq i < j \leq d.$
%\end{proof}

\appendix

\section{Likelihood ratio ordering}
\label{appendix:lro}

\begin{proof}[Proof of Lemma~\ref{log_concave}]
Let $\theta \geq 0$ and set $t = (y-x)/(y-x+\theta)$. 
By log-concavity, for $x \leq y$,
\begin{align*}
f(x+\theta) & \geq f(x)^{t} f(y+\theta)^{1-t} \\
f(y) & \geq f(x)^{1-t} f(y+\theta)^{t}.
\end{align*}
Therefore
\[
f(x+\theta) f(y) \geq f(x) f(y+\theta).
\]
This shows that
\begin{equation}
\label{translation_lr}
X-\theta \stackrel{lr}{\leq} X.
\end{equation}
It is a general fact, following directly from the definition, that if $W \stackrel{lr}{\leq} X$ then 
$(W \vert W \in A) \stackrel{lr}{\leq} (X \vert X \in A).$
Hence for all $\theta \geq 0$, 
\[
(X-\theta \vert X - \theta > 0) \stackrel{lr}{\leq} (X \vert X > 0).
\]
As $(-X \vert X < 0)$ is positive and $(X-\theta\vert X > \theta)$
also has a log-concave density then taking an expectation over \eqref{translation_lr},
\[
(X-\theta \vert X - \theta > 0) \stackrel{lr}{\leq} \zeta.
\]
We can repeat the argument with $X$ replaced by $(-X)$ to obtain the statement. 
\end{proof}

\begin{proof}[Proof of Lemma~\ref{stoch_ineq}]We compute
\begin{align*}
 & \E(\phi(U, V)) 
%& = \int \int \phi(x, y) \pr(U \in dx) \pr(V \in dy) \\
  \geq \int \int_{\{y >x\}} \phi(x, y) \pr(U \in dx)  \pr(V \in dy) \\
& \qquad \qquad \qquad \qquad \qquad +  \int \int_{\{x > y\}} \phi(x, y)\pr(U \in dx)  \pr(V \in dy) \\
& = \int \int_{\{y  > x\}} \phi(x, y) \pr(U \in dx)  \pr(V \in dy) + \phi(y, x) \pr(U \in dy)  \pr(V \in dx) \\
%& =  \int \int_{\{y > x\}} \phi(x, y) \P(U \in dx)  \P(V \in dy)  + \phi(y, x) \P(U \in dy)  \P(V \in dx) \\
& \geq  \int \int_{\{y > x\}} \phi(x, y) \big(\pr(U \in dx)  \pr(V \in dy) - \pr(U \in dy)  \pr(V \in dx)\big)\\
& \geq 0.
\end{align*}
Firstly we use that $\phi(x, x) \geq 0, x \in \mathbb{R}$. In the penultimate line we use property (i) of $\phi$. In the final line we use $U \leq_{lr} V$ and property (ii) of $\phi$.
\end{proof}
\bibliographystyle{abbrv}
 \bibliography{ORWgrowing}

\end{document}